\definecolor{ppurple}{RGB}{238,51,119}
\definecolor{bblack}{RGB}{66,66,66}
\definecolor{ggray}{RGB}{153,153,153}
\definecolor{rred}{RGB}{204,51,17}
\definecolor{oorange}{RGB}{255,112,67}
\definecolor{ggreen}{RGB}{0,153,136}
\definecolor{bblue}{RGB}{0,119,187}
\newtheorem{lemma}{Lemma}
\newaliascnt{proposition}{lemma}
\newaliascnt{corollary}{lemma}
\newtheorem{corollary}[corollary]{Corollary}
\newaliascnt{theorem}{lemma}
\newtheorem{theorem}[theorem]{Theorem}
\newaliascnt{definition}{lemma}
\newtheorem{definition}[definition]{Definition}
\newaliascnt{assumption}{lemma}
\newaliascnt{claim}{lemma}
\newaliascnt{notation}{lemma}
\newaliascnt{experiment}{lemma}
\newaliascnt{example}{lemma}
\newtheorem{example}[example]{Example}
\newaliascnt{remark}{lemma}
\newtheorem{remark}[remark]{Remark}
\theoremstyle{nonumberplain}
\newtheorem{proof}{Proof}
\newcommand{\N}{\ensuremath{\mathbb{N}}}
\newcommand{\R}{\ensuremath{\mathbb{R}}}
\newcommand{\zb}[1]{\ensuremath{\boldsymbol{#1}}}
\DeclareMathOperator*{\diag}{diag}
\DeclareMathOperator*{\supp}{supp}
\DeclareMathOperator{\dist}{dist}
\DeclareMathOperator{\spn}{span}
\DeclareMathOperator{\KL}{KL}
\newcommand{\norm}[1]{\left\lVert #1
	\right\rVert}
\newcommand\q[1]{\mathcal{#1}} 
\newcommand\bb[1]{\mathbb{#1}} 
\newcommand{\tT}{\mathrm{T}}
\newcommand{\uX}{\underline{X}}
\newcommand{\oX}{\overline{X}}
\newcommand{\pd}{\mathrm{Sym}_{\succ 0}}
\newcommand{\psd}{\mathrm{Sym}_{\succeq 0}}
\DeclareMathOperator{\1}{\mathds{1}}
\DeclareMathOperator{\Fix}{Fix}
\title{Unsupervised Ground Metric Learning}
\date{}
\author{%
	Janis Auffenberg \footnotemark[2]
	\and
	Jonas Bresch \footnotemark[2] \\   
	{\footnotesize\href{mailto:bresch@math.tu-berlin.de}{bresch@math.tu-berlin.de}}
	\and
	Oleh Melnyk\thanks{Corresponding author.} \thanks{Technische Universität Berlin, Institute of Mathematics, Straße des 17. Juni 136, 10623 Berlin, Germany} \thanks{Ludwig-Maximilians-Universität München, Bavarian AI Chair for Mathematical Foundation of Artificial Intelligence, Akademiestr. 7, 80799 Munich, Germany.} \\%
	{\footnotesize\href{mailto:melnyk@math.tu-berlin.de}{melnyk@math.tu-berlin.de}}
	\and
	Gabriele Steidl\footnotemark[2]\\%
	{\footnotesize\href{mailto:steidl@math.tu-berlin.de}{steidl@math.tu-berlin.de}}
}
\begin{document}
	
    \def\sectionautorefname{Section}
    \def\subsectionautorefname{Section} 
    
    \maketitle
    
    \begin{abstract}
       Data classification without access to labeled samples remains a challenging problem. It usually depends
       on an appropriately chosen distance between features, a topic addressed in metric learning.
       Recently, Huizing, Cantini and Peyr\'e proposed to simultaneously learn optimal transport (OT) cost matrices between samples and features of the dataset. This leads to the task of finding positive eigenvectors of a certain nonlinear function that maps cost matrices to OT distances.
       Having this basic idea in mind, we consider both the algorithmic and the modeling part of unsupervised metric learning. First, we examine appropriate algorithms and their convergence. In particular, we propose to use the stochastic random function iteration algorithm and prove that it converges linearly for our setting, although our operators are not paracontractive as it was required for convergence so far. 
       Second, we ask the natural question if the OT distance can be replaced by other distances. We show how  Mahalanobis-like distances fit into our considerations. Further, we examine an approach via graph Laplacians. In contrast to the previous settings, we have just to deal with linear functions in the wanted matrices here, so that simple algorithms from linear algebra can be applied. 
    \end{abstract}
    
\section{Introduction}  \label{sec: intro}
The goal of classification and clustering is to assign labels to a dataset consisting of $n$ samples $\{X_i\}_{i = 1}^n \subset \R^m$ with $m$ features each. 
    Typically, algorithms are based on the distances between features and the choice of distances significantly impacts the performance of clustering algorithms. Therefore, finding an optimal distance for a given labeled dataset is crucial.
    Since the work \cite{xing2002distance}, this problem has grown into a field of its own, known as metric learning. The applications of metric learning include computer vision \cite{hu2016deep, coskun2018human, milbich2020diva}, computational biology \cite{makrodimitris2019metric, bellazzi2021gene, huizing2022unsupervised, luerig2024bioencoder}, natural language processing \cite{kusner2015from, huang2016supervised, coria2020metric, biswas2022geometric}.
    Commonly, the search space is a class of distance functions $\dist_A$ parameterized by a matrix $A \in \R^{m \times m}$ and the task is reduced to searching for an optimal $A$.     
    A prominent example is Mahalanobis distance learning, where finding $A$ can be performed by, e.g., the large-margin nearest neighbors method \cite{weinberger2005distance} or information-theoretic metric learning \cite{davis2007information}. A detailed review on Mahalanobis distance learning can be found in \cite{bellet2015metric, suarez2021tutorial, ghojogh2022spectral, ghojogh2023elements}. Another class of distances is based on optimal transport (OT) cost matrices. 
Finding the cost matrix $A$ in the OT distance $W_A$ was initially proposed in \cite{cuturi2014ground}. 
While verifying that $A$ is a metric matrix is computationally heavy, in \cite{wang2012supervised} one of the constraints is dropped to speed up the learning process. In \cite{xu2018multi,kerdoncuff2020metric}, the entries of $A$ were assumed to be Mahalanobis distances $M_B$ between feature vectors and the matrix $B$ is learned instead, and in \cite{stuart2020inverse}, the metric matrix is associated to the shortest path of the underlying data graph. A similar relation is used in \cite{heitz2021ground} with graph geodesic distances.

Another class of distances uses the embedding of samples into a latent space
    \begin{equation}\label{eq: embedded distance}
        \dist_{\varphi}(X_i,X_j) \coloneqq \| \varphi(X_i) - \varphi(X_j) \|_2.
    \end{equation}
    A prominent class of methods for finding such embeddings is deep metric learning \cite{hadsell2006dimensionality}, where $\varphi$ is a network $\q N_\theta$ parametrized by $\theta$. Note that Euclidean distance in \eqref{eq: embedded distance} can be replaced by Wasserstein distance \cite{dou2022optimal} or completely avoided by directly maximizing the performance of the modified clustering algorithms on the embeddings $\q N_\theta(X_i)$ \cite{liu2017sphereface, wang2018cosface}. We refer the reader to surveys on deep metric learning \cite{kaya2019deep, vu2021deep, ghojogh2022spectral} for more details. 

    The general strategy for finding $A$ or $\theta$ in all of the above methods is to minimize $\dist(X_i, X_j)$ whenever $X_i$ and $X_j$ belong to the same class and maximize $\dist(X_i, X_j)$ otherwise.  Alternatively, losses based on triplets $(X_i, X_j, X_s)$ are employed \cite{weinberger2005distance, wang2012supervised, schroff2015facenet}, where $X_i$ and $X_j$ belongs to the same class, while $X_i$ and $X_s$ do not.          
    However, if the class labels are not available, it is no longer possible to apply supervised methods. 
    A possible solution is to avoid metric learning and heuristically choose the entries of $A$ based on features, with some application-motivated embedding $\varphi$. 
     For instance, the Word Moving \cite{kusner2015from} and Gene Mover \cite{bellazzi2021gene} distances use word and gene embeddings to compute $A$ for $W_A$. Other options are hierarchical neighbor embedding \cite{liu2024hierarchical} or optimal transport-based metric space embedding \cite{beier2025joint,beckmann2025normalized,beckmann2025maxnormalized}.  
    To circumvent the absence of labels for image clustering, some contrastive learning methods \cite{khodadadeh2019unsupervised, chen2020simple} use data augmentation. Namely, it is observed that by sampling a few images from the dataset, the images will likely have distinct labels. For each sampled image, a collection of images sharing the same unknown label is created through augmentation and is used for training.
    In \cite{iscen2018mining}, a manifold similarity distance is used for constructing triples $(X_i, X_j, X_s)$.

This paper is based on an unsupervised approach for OT metric learning proposed by Huizing, Cantini and Peyr\'e in  \cite{huizing2022unsupervised}.
The aim of this paper is twofold: first, we consider numerical algorithms to solve the relevant fixed-point problems, including
conditions under which the algorithms converge. In particular, we give a proof of linear convergence of the stochastic random function algorithm \cite{hermer2019random} also for non-paracontractive operators.
Later, we will verify how these conditions are fulfilled for the OT as well as for the Mahalanobis-like distance setting.
Second, we examine how the idea from \cite{huizing2022unsupervised} of simultaneously learning OT distance/cost matrices between samples and features
can be generalized to other distance matrices than just OT costs.
For modeling, we deal with Mahalanobis-like distances and propose an additional approach using graph Laplacians. The latter corresponds to linear functions in the matrices we aim to learn, so that numerical methods from linear algebra can be applied, making the approach much faster than the other non-linear ones.

\paragraph{Outline of the paper.} In Section \ref{sec: fix point}, we introduce two fixed point problems for general functions $\mathcal F$ and $G$ mapping from $\R^{m \times m}$  to $\R^{n \times n}$ and back.
In Subsection \ref{subsec:algs}, we propose a simple
iteration algorithms for each of the problems and address conditions on $\q F$ and $\q G$ such that a unique fixed point exists and convergence of the iteration sequence is guaranteed.
In Subsection \ref{subsec:stoch_algs}, we proposed a stochastic
algorithm, the so-called Random Function Iteration algorithm, for our second problem. We cannot use the convergence results shown for this algorithm in \cite{hermer2019random}, since our operators are not paracontractive, which is the essential assumption. Therefore, we provide our own convergence proof.
In the next sections, we specify the functions $\q F$ and $\q G$.
In each case, it must be ensured that the iterations map into a certain
subset of matrices, which is different for the different models.
In Section \ref{sec: wasserstein}, we deal with regularized OT-like
distances, in particular, the Sinkhorn divergence \cite{feydy2019interpolating}.
This setting was handled for problem \eqref{fix2} in the original paper \cite{huizing2022unsupervised}. We consider the problem again, complete the gaps in theory and
address previously established statements 
from our point of view, see Remark \ref{peyre_1}  and  Remark \ref{rem:peyre_2}. 
Then, in Section \ref{sec: kernel}, we deal with simpler model of regularized 
Mahalanobis-like distances.
Finally, we use just graph Laplacians for $\q F$ and $\q G$ for which problem \eqref{fix1} becomes a linear one. We prove the existence of a single largest eigenvalue of an associated linear eigenvalue problem.
Numerical results in Section \ref{sec:numerics} explore the performance of the proposed models in unsupervised clustering/classification tasks and highlight their differences.

\section{Fixed Point Algorithms}     \label{sec: fix point}
Let $[m] \coloneqq \{1,\ldots,m\}$. By $I_m$ we denote the $m \times m$ identity matrix and by $\1_m \in \bb R^m$ the vector with all entries equal to one.
Further, we use for $A \in \R^{m \times m}$ the notation
    \[
    \| A \|_p = \Big( \sum_{i,j=1}^m  |A_{i,j}|^p \Big)^{1/p}, \ 1 \le p < \infty, \qquad \|A\|_\infty = \max_{i,j\in [m]} |A_{i,j}|.
    \]
For $p=2$, this norm is also known as the Frobenius norm. 

For two functions 
$\mathcal F: \R^{m \times m}  \to \R^{n \times n}$ 
and
$\mathcal G: \R^{n \times n}  \to \R^{m \times m}$, which will be specified in the later sections, 
we are interested in matrices $A\in \mathbb R^{m \times m}$ and $B\in \mathbb R^{n \times n}$ fulfilling the following two  fixed-point relations \eqref{fix2} and \eqref{fix1}.
We include the first problem, since it was handled in the literature, in particular in \cite{huizing2022unsupervised}, while our main interest will be on the second problem.
First, we are asking for solutions of 
\begin{equation} \label{fix2}
    B =  \tilde {\q F}(A) \coloneqq \frac{ \q F(A)}{\|\q F(A)\|_\infty } 
    \quad \text{and} \quad 
    A = \tilde {\q G}(B) \coloneqq\frac{\q G(B)}{\|\q G(B)\|_\infty }. 
\end{equation}
Then we have 
\begin{equation}    \label{eq: operator T}
    A = \tilde{\q T}(A) , \quad  \tilde{\q T} \coloneqq \tilde{\q G}\circ \tilde {\q F}
\end{equation}
and the pair
$(A,B) \coloneqq \big(A, \tilde{ \q F}(A)\big)$
is a fixed point of
$(A,B) \mapsto \big(\tilde{\q G}(B), \tilde{\q F}(A) \big)$
or, in other words, an eigenvector of this operator with eigenvalue $1$.

Further, we will deal with problems of the form
\begin{equation} \label{fix1}
    B = \gamma_\q F \q F(A) 
    \quad \text{and} \quad 
    A = \gamma_\q G  \q G(B), \quad 
    \gamma_\q F, \gamma_\q G > 0,
\end{equation}
where $\gamma_\q F$ and $\gamma_\q G$ may depend on the data $X$, but
not on $A$ or $B$.
Note that \eqref{fix1} implies $A = \gamma_\q G  \q G \left( \gamma_\q F \q F(A) \right)$
and for a fixed point $A$ we have that 
$(A,B) \coloneqq (A, \gamma_{\q F} \q F(A))$ is a fixed point of 
$(A,B) \mapsto \big(\gamma_{\q G} \q G(B), \gamma_{\q F} \q F(A) \big)$. For $\gamma_\q F = \gamma_\q G = \gamma$ it is an eigenvector of $(A,B) \mapsto \big( \q G(B), \q F(A) \big)$
with eigenvalue $1/\gamma$.
To solve \eqref{fix1}, we consider the operator
$\q T: \R^{m \times m}  \to \R^{m \times m}$ be defined by
    \begin{equation}\label{eq: operator Q}
        \q T(A)
        \coloneqq 
        (1-\alpha) A
        + \alpha \gamma_\q G \q G \left( \gamma_\q F \q F (A) \right), \quad \alpha \in (0,1]
    \end{equation}  
    which is just the concatenation 
    $(\gamma_\q G  \q G) \circ  (\gamma_\q F \q F)$ for $\alpha = 1$.
   It is easy to check that the fixed point sets fulfill
    $$\Fix(\q T) = \Fix\big((\gamma_\q G \q G) \circ (\gamma_\q F \q F) \big) \quad \text{for all} \quad \alpha \in (0,1].
    $$

In the following, we propose a simple deterministic and a stochastic algorithm for finding fixed points and prove convergence results.  In Sections \ref{sec: wasserstein} and \ref{sec: kernel}, we will apply these results for various
nonlinear functions $\q F$ and $\q G$.
In the next Subsection \ref{subsec:algs}, we consider simple deterministic algorithms for solving \eqref{fix1} and \eqref{fix2}.
Then, in Subsection  \ref{subsec:stoch_algs}, we propose a stochastic algorithm for \eqref{fix1} which is better suited for high-dimensional problems and its prove linear convergence.

\subsection{Simple Fixed Point Iterations}\label{subsec:algs}
For solving \eqref{fix2}, we consider the following Algorithm \ref{alg:fix2}.

\begin{algorithm}[!h] 
        \caption{Fix-Point Algorithm for \eqref{fix2}} \label{alg:fix2}
        \vspace{1mm}
        \KwData{Initialization $A^0 \in \R^{m \times m}$.}     
        \For{$t=0,1,\ldots$}{
         $B^{t+1} =  \q F(A^t)/ \| \q F(A^t)\|_\infty$\\
         $A^{t+1} =  \q G(B^{t+1})/ \| \q G(B^{t+1})\|_\infty$  
                
        }       
    \end{algorithm}

Recall that an operator $\q F : \R^{m \times m} \to \R^{n \times n}$ is called \emph{Lipschitz continuous with Lipschitz constant} $L$ with respect to the norms $\| \cdot \|$, if
    \[
        \| \q F(A) - \q F(A')  \| 
        \le L \| A - A' \|
        \quad 
        \text{for all}
        \quad 
        A, A' \in \R^{m \times m}.
    \]
We will simply say that $\q F$ is $L$-\emph{ Lipschitz}.
If $L\le 1$, then  $\q F$ is referred to as \emph{nonexpansive}, 
and if $L<1$ as  \emph{contractive}.
We have the following convergence result.

\begin{theorem} \label{l:lipshcitz}
 Let $\q F$ and $\q G$ be Lipschitz continuous with constants $L_{\q F}$ and $L_{\q G}$
 and let  $\| \q F(A) \|_\infty \ge C_{\q F} >0$, $\| \q G(B) \|_\infty \ge C_{\q G} > 0$ 
    for all $A \in \R^{m \times m}$ 
    and $B \in \R^{n \times n}$.
    Then, 
    $\tilde{\q F}$ and 
    $\tilde{\q G}$ 
    defined by \eqref{fix2}
    are Lipschitz continuous with constants 
    $L_{\tilde{\q F}} \coloneqq 2 L_{\q F} / C_{\q F}$ 
    and 
    $L_{\tilde{ \q G}}  \coloneqq 2 L_{\q G} / C_{\q G}$ 
    and
     $\tilde{\q T} \coloneqq \tilde{\q G} \circ \tilde{\q F}$ 
    is Lipschitz continuous with constant 
    $L_{\tilde {\q T}} \coloneqq L_{\tilde {\q F}} L_{\tilde {\q G}}$. 
    If $L_{\tilde {\q T}} < \frac14 C_{\q F} C_{\q G}$, then
    there exists a unique fixed point $A$ of 
    $ \tilde{\q T}$
    and the sequence $(A^t)_{t \in \N}$ generated 
    by Algorithm \ref{alg:fix2}
    converges for an arbitrary initialization $A^0 \in \R^{m \times m}$     linearly to $A$,
     meaning that
    \begin{equation} \label{1}
    \|A^{t+1} - A \|_\infty \le L_{\tilde{\q T}} \| A^t - A \|_\infty
    \quad \text{for all} \quad
    t \in \N.
    \end{equation}  
    \end{theorem}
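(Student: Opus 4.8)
The statement bundles four claims: Lipschitz bounds for $\tilde{\q F}$, $\tilde{\q G}$, and $\tilde{\q T}$, and then a Banach-fixed-point-type conclusion. I would tackle them in exactly that order.

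First, the Lipschitz bound for the normalization. Write $\tilde{\q F}(A) - \tilde{\q F}(A') = \frac{\q F(A)}{\|\q F(A)\|_\infty} - \frac{\q F(A')}{\|\q F(A')\|_\infty}$ and insert the mixed term $\frac{\q F(A')}{\|\q F(A)\|_\infty}$, so that
\begin{align}
\tilde{\q F}(A) - \tilde{\q F}(A')
&= \frac{\q F(A) - \q F(A')}{\|\q F(A)\|_\infty}
+ \q F(A')\Big(\frac{1}{\|\q F(A)\|_\infty} - \frac{1}{\|\q F(A')\|_\infty}\Big).
\end{align}
The first term is bounded by $L_{\q F}\|A-A'\|_\infty / C_{\q F}$ using the lower bound $\|\q F(A)\|_\infty \ge C_{\q F}$. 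For the second term, $\big|\,\|\q F(A)\|_\infty - \|\q F(A')\|_\infty\,\big| \le \|\q F(A) - \q F(A')\|_\infty \le L_{\q F}\|A-A'\|_\infty$ by the reverse triangle inequality, and $\|\q F(A')\|_\infty$ cancels against the numerator $\q F(A')$ up to the factor $1/\|\q F(A)\|_\infty \le 1/C_{\q F}$; this gives another $L_{\q F}\|A-A'\|_\infty / C_{\q F}$. Summing yields $L_{\tilde{\q F}} = 2L_{\q F}/C_{\q F}$, and symmetrically for $\tilde{\q G}$. The Lipschitz constant of the composition $\tilde{\q T} = \tilde{\q G}\circ\tilde{\q F}$ is then the product $L_{\tilde{\q F}} L_{\tilde{\q G}}$ by the standard chain estimate.

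Second, the fixed-point conclusion. Since $L_{\tilde{\q T}} = L_{\tilde{\q F}}L_{\tilde{\q G}} = 4 L_{\q F}L_{\q G}/(C_{\q F}C_{\q G})$, the hypothesis $L_{\tilde{\q T}} < \tfrac14 C_{\q F} C_{\q G}$ —wait, more carefully, we need $L_{\tilde{\q T}} < 1$ for a contraction, and indeed the hypothesis as stated should be read as giving $L_{\tilde{\q T}} < 1$ (I would double-check the exact form of the constant; the intended condition is that $\tilde{\q T}$ is a contraction on the complete metric space $(\R^{m\times m}, \|\cdot\|_\infty)$). Granting this, the Banach fixed-point theorem gives a unique fixed point $A$ and linear convergence of the Picard iterates. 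It remains to observe that Algorithm~\ref{alg:fix2} produces exactly the Picard iterates of $\tilde{\q T}$: the two lines of the loop compute $B^{t+1} = \tilde{\q F}(A^t)$ and $A^{t+1} = \tilde{\q G}(B^{t+1}) = \tilde{\q G}(\tilde{\q F}(A^t)) = \tilde{\q T}(A^t)$. Hence $\|A^{t+1} - A\|_\infty = \|\tilde{\q T}(A^t) - \tilde{\q T}(A)\|_\infty \le L_{\tilde{\q T}}\|A^t - A\|_\infty$, which is \eqref{1}.

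The only genuinely delicate point is the first estimate — making sure the mixed-term trick is applied so that every occurrence of an unbounded quantity is paired with a $1/\|\q F(\cdot)\|_\infty$ factor that can be controlled by $1/C_{\q F}$, and tracking that this is where the factor $2$ (rather than $1$) in $L_{\tilde{\q F}}$ comes from. Everything after that is bookkeeping: the composition bound is routine, and the contraction/Picard argument is textbook once the contraction factor is confirmed to be below $1$. I would also note in passing that the hypotheses implicitly require $\q F, \q G$ to be defined on all of $\R^{m\times m}$, $\R^{n\times n}$ and that the lower bounds $C_{\q F}, C_{\q G}$ hold globally, so no invariant-set argument is needed here (unlike in the later sections).
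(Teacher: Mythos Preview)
Your proof is correct and follows essentially the same route as the paper: the paper also splits $\tilde{\q F}(A)-\tilde{\q F}(A')$ via an add-and-subtract trick (it inserts $\|\q F(A)\|_\infty\,\q F(A)$ in the common-denominator numerator rather than $\q F(A')/\|\q F(A)\|_\infty$, which is algebraically the same manoeuvre), applies the reverse triangle inequality and the lower bound $\|\q F(A')\|_\infty\ge C_{\q F}$ to get the factor $2L_{\q F}/C_{\q F}$, and then invokes Banach's fixed point theorem for the contraction/convergence claim. Your hesitation about the hypothesis ``$L_{\tilde{\q T}}<\tfrac14 C_{\q F}C_{\q G}$'' is well placed---what is actually used (and what the paper's proof uses) is simply $L_{\tilde{\q T}}<1$, i.e.\ $L_{\q F}L_{\q G}<\tfrac14 C_{\q F}C_{\q G}$.
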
     

\begin{proof} We have only to estimate the Lipschitz constant of $\tilde{\q F}$. The estimation for $\tilde{\q G}$ follows similarly and the other assertions are just conclusions from Banach's Fixed Point Theorem \ref{thm:banach}.
    For arbitrary $A,A' \in \R^{m \times m}$, we get
    \begin{align*}
        \tilde{\q F}(A) - \tilde{\q F}(A') 
        & = \frac{\q F(A)}{\| \q F(A) \|_\infty} - \frac{\q F(A')}{\| \q F(A') \|_\infty} \\
        & = \frac{ \| \q F(A') \|_\infty \, \q F(A) - \|\q F(A)\|_\infty \, \q F(A')}{\| \q F(A) \|_\infty \|\q F(A')\|_\infty}\\
        &= \frac{ [\| \q F(A') \|_\infty - \| \q F(A) \|_\infty] \, \q F(A) + \|\q F(A)\|_\infty [\q F(A) -\q F(A') ] }{\| \q F(A) \|_\infty \|\q F(A')\|_\infty}.
    \end{align*}
    Taking the norm together with reverse- and triangle inequalities yields
    \begin{align*}
        \left\| \tilde{\q F}(A) -\tilde{\q F}(A') \right\|_\infty 
        &\le \frac{ | \| \q F(A') \|_\infty - \| \q F(A) \|_\infty | \, \| \q F(A) \|_\infty + \|\q F(A)\|_\infty \| \q F(A) -\q F(A') \|_\infty }{\| \q F(A) \|_\infty \|\q F(A')\|_\infty} \\
        &\le \frac{ 2 \| \q F(A') - \q F(A) \|_\infty}{ \|\q F(A')\|_\infty} 
        \le \frac{ 2 L_{\q F} }{C_{\q F}} \| A - A' \|_\infty
        = L_{\tilde{\q F}} \| A - A' \|_\infty. 
    \end{align*}
     \end{proof}
     
For problem \eqref{fix1}, we propose Algorithm \ref{alg:fix}, for which we have the following convergence result.

\begin{algorithm}[!h] 
        \caption{Fix-Point Algorithm for \eqref{fix1}} \label{alg:fix}
        \vspace{1mm}
        \KwData{Initialization $A^0 \in \R^{m \times m}$, 
        parameter: $0<\alpha \leq 1$.}
        \For{$t=0,1,\ldots$}{
         $B^{t+1} = \gamma_{\q F} \q F(A^t)$\\
         $A^{t+1} = (1-\alpha)A^t + \alpha \gamma_{\q G}  \q  G  
        (B^{t+1}) $ 
        
        }       
    \end{algorithm}

\begin{theorem}\label{thm: convergence grad}
    Let $\q F$ and $\q G$ be Lipschitz continuous with constants $L_{\q F}$ and $L_{\q G}$. 
    Then the operator  $\q T: \R^{m \times m}  \to \R^{m \times m}$  defined by
    \eqref{eq: operator Q}
    is Lipschitz continuous with constant 
    $L_\q T \coloneqq 1 - \alpha \left(1 - L_\q F L_\q G\gamma_\q F \gamma_\q G \right)$.
    If $L_\q F L_\q G < \frac{1}{\gamma_\q F \gamma_\q G}$, 
    then there exists a unique fixed point $A$ of $\q T$ 
    and the sequence $(A^t)_{t \in \N}$ generated by Algorithm \ref{alg:fix} converges
    for an arbitrary initialization $A^0 \in \R^{m \times m}$ 
    linearly to $A$.
        
\end{theorem}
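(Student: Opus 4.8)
The plan is to show that $\q T$ is a contraction on $(\R^{m\times m}, \|\cdot\|_\infty)$ and then invoke Banach's Fixed Point Theorem, exactly as in the proof of \autoref{l:lipshcitz}. The three operators making up $\q T$ are: the affine map $A \mapsto (1-\alpha)A$, and the composition $\gamma_\q G \q G \circ \gamma_\q F \q F$ scaled by $\alpha$. So the first step is to estimate the Lipschitz constant of the composition $(\gamma_\q G \q G) \circ (\gamma_\q F \q F)$. Since scaling a function by a positive constant scales its Lipschitz constant by the same factor, $\gamma_\q F \q F$ is $(\gamma_\q F L_\q F)$-Lipschitz and $\gamma_\q G \q G$ is $(\gamma_\q G L_\q G)$-Lipschitz, hence the composition is $(\gamma_\q F \gamma_\q G L_\q F L_\q G)$-Lipschitz.

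Next I would combine the two pieces. For arbitrary $A, A' \in \R^{m\times m}$,
\begin{align*}
\|\q T(A) - \q T(A')\|_\infty
&\le (1-\alpha)\|A - A'\|_\infty
+ \alpha \big\| \gamma_\q G \q G(\gamma_\q F \q F(A)) - \gamma_\q G \q G(\gamma_\q F \q F(A')) \big\|_\infty \\
&\le (1-\alpha)\|A-A'\|_\infty + \alpha \gamma_\q F \gamma_\q G L_\q F L_\q G \|A-A'\|_\infty \\
&= \big(1 - \alpha(1 - \gamma_\q F \gamma_\q G L_\q F L_\q G)\big)\|A-A'\|_\infty,
\end{align*}
which gives $L_\q T = 1 - \alpha(1 - \gamma_\q F \gamma_\q G L_\q F L_\q G)$ as claimed. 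Under the hypothesis $L_\q F L_\q G < 1/(\gamma_\q F \gamma_\q G)$ we have $1 - \gamma_\q F \gamma_\q G L_\q F L_\q G \in (0,1]$, and since $\alpha \in (0,1]$, it follows that $L_\q T \in [0,1)$, so $\q T$ is contractive.

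Finally, Banach's Fixed Point Theorem yields a unique fixed point $A$ of $\q T$ and linear (geometric) convergence $\|A^{t+1} - A\|_\infty \le L_\q T \|A^t - A\|_\infty$ of the iterates $A^{t+1} = \q T(A^t)$ generated by \autoref{alg:fix}, for any initialization $A^0$. One should also note — using the identity $\Fix(\q T) = \Fix((\gamma_\q G \q G)\circ(\gamma_\q F \q F))$ recorded in the excerpt — that this fixed point is independent of $\alpha$ and solves \eqref{fix1} together with $B = \gamma_\q F \q F(A)$. I do not anticipate a genuine obstacle here; the only point requiring a little care is checking that the relaxation parameter $\alpha$ keeps $L_\q T$ strictly below $1$ (which is why the case $\alpha = 0$ is excluded), and making sure the bound on $L_\q F L_\q G$ is the exact threshold that makes the bracket $1 - \gamma_\q F \gamma_\q G L_\q F L_\q G$ positive.
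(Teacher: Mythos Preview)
Your proposal is correct and follows essentially the same route as the paper's proof: apply the triangle inequality to split $\q T(A)-\q T(A')$ into the $(1-\alpha)(A-A')$ part and the $\alpha$-scaled composition part, bound the latter via the product of Lipschitz constants $\gamma_\q F\gamma_\q G L_\q F L_\q G$, and conclude by Banach's Fixed Point Theorem. If anything, you spell out the verification that $L_{\q T}\in[0,1)$ and the $\alpha$-independence of the fixed point a bit more explicitly than the paper does.
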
 
    
    \begin{proof}
        Straightforward computations yield
        \begin{align*}
        \| \q T(A) - \q T(A') \|_\infty 
        & \le 
        (1 -\alpha) \| A - A' \|_\infty + \alpha         \gamma_\q G	\| \q G( \gamma_\q F \q F (A) ) - \q G( \q F (A') ) \|_\infty \\
        & 
        \le (1 -\alpha) \| A - A' \|_\infty + \alpha \gamma_\q G L_\q G \gamma_\q F L_\q F \| A - A'\|_\infty
        \le 
        L_{\q T}  \| A - A'\|_\infty.
        \end{align*}
    Thus,  for $L_\q F L_\q G < \frac{1}{\gamma_F \gamma_\q G}$ the operator 
    $\q Q$ is contractive
    and the existence of a unique fixed point as well as \eqref{1} follow from Banach's Fixed Point Theorem \ref{thm:banach}.
           \end{proof}

\begin{remark}\label{rem:pos_def}
At first glance it seems
    that the lower bounds 
    $\|\q F\|_\infty \ge C_\q F >0$ 
    and $\|\q G\|_\infty \ge C_\q G > 0$
    are not required in Theorem \ref{thm: convergence grad}.
    Yet,     vanishing $\q F$ and $\q G$ may still cause a problem 
    in finding non-trivial solutions. Consider for example positively homogeneous functions
    $\q F$ and $\q G$,  i.e.
        \[
        \q F(\lambda A) = \lambda \q F(A)
        \quad \text{and} \quad
        \q G(\lambda B) = \lambda \q G(B) 
        \quad \text{for all} \quad  
        \lambda>0.
        \]
    which fulfill the assumptions of Theorem \ref{thm: convergence grad}.
        Then, 
        we get for the $m \times m$ zero matrix $0_{m,m}$ that
        \[
        \q F(0_{m,m}) = \q F(\lambda 0_{m,m})  = \lambda \q F(0_{m,m}),
        \quad \text{for all} \quad
        \lambda >0,
        \] 
        which is only possible if $\q F(0_{m,m}) = 0_{n,n}$ and analogously, 
        $\q G(0_{n,n}) = 0_{m,m}$. Thus,
        \begin{equation}\label{eq:positive homogenuity}
            \q T(0_{m,m})
            =
            (1- \alpha) 0_{m,m} + \alpha \gamma_\q G \q G( \gamma_\q F \q F(0_{m,m})) 
            =  (1-\alpha) 0_{m,m} + \alpha \gamma_\q G \q G(0_{n,n})
            = 0_{m,m},
        \end{equation}
        which is the unique fixed point of $\q T$ by Theorem \ref{thm: convergence grad}.
       \end{remark}

       Alternatively, it is possible to consider ''parallel'' (Jacobi-like) updates by using just $B^t$ instead of $B^{t+1}$ in the second step of the algorithms.
However, in our numerical experiments this takes longer to converge and does not lead to improved results, 
so that we focus on ''sequential'' (Gauss-Seidel like) updates.

\subsection{Stochastic Fixed Point Iterations}\label{subsec:stoch_algs}  
In practical applications, the complete evaluation of $\q F(A)$ may become costly. Therefore, we consider a stochastic version 
of Algorithm \ref{alg:fix}. 
To this end, rewrite the steps of the algorithm as
\begin{align}
B^{t+1} &= B^t + \gamma_\q F \q F(A^t) - B^t,\\
A^{t+1} &= A^t + \alpha \left( \gamma_G \q G(B^{t+1}) - A^t \right).
\end{align}
The idea  is now to update only one entry in $A$ and $B$ in a step. 
More precisely, we consider two families of operators
$R^{(i,j)}: \R^{m \times m} \times \R^{n \times n} \to \R^{m \times m}$ with $i,j \in [n]$
and
    $\q S^{(k,\ell)}: \R^{m \times m} \times \R^{n \times n} \to \R^{n \times n}$ with $k,\ell \in [m]$ defined as
    \begin{equation}
    \begin{aligned}\label{eq: op stochastic}
    \q R^{(i,j)} (A, B)
        & \coloneqq B + ( \gamma_\q F \q F_{i,j}(A) - B_{i,j}) E^{i,j}_{n},\\
        \q S^{(k,\ell)} (A, B)
        & \coloneqq A + \alpha ( \gamma_\q G \q G_{k,\ell}(B) - A_{k,\ell}) E^{k,\ell}_{m},
    \end{aligned}
    \end{equation} 
where $E^{i,j}_{n}$ is $n \times n$ matrix with a single nonzero entry $(E^{i,j}_{n})_{i,j} = 1$. 
Then we consider the \emph{Random Function Iterations} (RFI) in Algorithm \ref{alg: rfi}. This algorithm was  originally proposed in \cite{hermer2020random}.
    
    \begin{algorithm}[h!]
        \caption{Random Function Iteration (RFI) for \eqref{fix1} }\label{alg: rfi}
        \vspace{1mm}
        \KwData{Initialization $A^0 \in \R^{m \times m}$, $B^0 \in \R^{n \times n}$, parameter $0<\alpha < 1$}
      Let $\xi_t = (k_t,\ell_t,i_t,j_t)$, 
        $t \in \N$ be the sequence of independent identically distributed random variables sampled uniformly 
        from $[m]^2 \times [n]^2$ \\
        \For{$t=0,1,\ldots$}{
            Update $B^{t+1} = \q R^{(i_t,j_t)} (A^t,B^t)$. \\
            Update $A^{t+1} = \q S^{(k_t,\ell_t)} (A^t,B^{t+1}) $
       
        }       
          \end{algorithm}

 For $\xi \coloneqq (k,\ell,i,j) \in [m]^2 \times [n]^2$,
we introduce the operator $\q T^{\xi}: \R^{m \times m} \times \R^{n \times n} \to \R^{m \times m} \times \R^{n \times n}$ 
by
    \begin{equation}\label{eq: Q stochastic}
        \q T^{\xi}(A,B) 
        \coloneqq \left( \q T^{\xi,1}(A,B), \q T^{\xi,2}(A,B) \right) 
        \coloneqq \left( \q S^{(k,\ell)}(A, \q R^{(i,j)} (A,B) ), \q R^{(i,j)}(A,B) \right)
    \end{equation}
    Then, the iteration in Algorithm \ref{alg: rfi} can be written as 
    \[
        (A^{t+1}, B^{t+1}) 
        = \q T^{\xi_t} (A^t, B^t).
    \]
    To analyze the algorithm, we equip the space $\R^{m \times m} \times \R^{n \times n}$ with the norm 
    \[
        \| (A,B) \|_\infty 
        \coloneqq 
        \max\{ \| A \|_\infty, \| B \|_\infty \}.
    \]
    The next lemma describes important properties of  $\q T^\xi$.   
        
    \begin{lemma}\label{l: properties stochastic}
    Let $\q F$ and $\q G$ be Lipschitz continuous with  constants $L_{\q F}$ and $L_{\q G}$
    and
    $0 < L_\q F <  \frac{1}{\gamma_{\q F}}$ 
    and $0 < L_\q G < \frac{1}{\gamma_{\q G}}$. 
    Then, 
    for all $\xi = (k,\ell ,i ,j) \in [m]^2 \times [n]^2$ 
    the operators $\q R^{(i,j)}$, $\q S^{(k,\ell)}$ 
    and $\q T^\xi$ 
    are nonexpansive. 
    Furthermore, 
    there exists an unique tuple
    $(A,B) \in \R^{m \times m} \times \R^{n \times n}$,
    such that  
    \begin{equation}\label{eq: fixed point sets}
        \bigcap_{\xi \in [m]^2 \times [n]^2} \Fix(\q T^\xi) 
        = \left\{ (A, \gamma_\q F \q F(A) ) 
        : \, A \in \Fix(\q T) \right\}
        = \{ (A,B) \}.
    \end{equation} 
    \end{lemma}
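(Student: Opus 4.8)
The plan is to establish the three claims of the lemma in order: nonexpansiveness of the component operators $\q R^{(i,j)}$ and $\q S^{(k,\ell)}$, nonexpansiveness of $\q T^\xi$, and finally the fixed-point-set identity together with uniqueness. For the first part, I would fix $\xi=(k,\ell,i,j)$ and two pairs $(A,B),(A',B')$ and compute $\q R^{(i,j)}(A,B)-\q R^{(i,j)}(A',B')$ directly from the definition in \eqref{eq: op stochastic}. Off the $(i,j)$-entry the operator acts as the identity, so the difference there is exactly $B-B'$; in the $(i,j)$-entry it is $\gamma_\q F(\q F_{i,j}(A)-\q F_{i,j}(A'))$, which by $L_\q F$-Lipschitz continuity of $\q F$ (componentwise in the $\|\cdot\|_\infty$-sense) is bounded by $\gamma_\q F L_\q F\|A-A'\|_\infty$. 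Since $\gamma_\q F L_\q F<1$, the maximum over all entries is at most $\max\{\|B-B'\|_\infty,\|A-A'\|_\infty\}=\|(A,B)-(A',B')\|_\infty$, giving nonexpansiveness of $\q R^{(i,j)}$ with respect to the product $\infty$-norm. The argument for $\q S^{(k,\ell)}$ is analogous, using that $x\mapsto (1-\alpha)x+\alpha y$ contracts toward $y$ and that $\alpha\gamma_\q G L_\q G<\alpha<1$, so that the $(k,\ell)$-entry of the difference is bounded by $(1-\alpha)\|A-A'\|_\infty+\alpha\gamma_\q G L_\q G\|B-B'\|_\infty\le\|(A,B)-(A',B')\|_\infty$.

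For the second claim, I would observe that $\q T^\xi$ is the composition of two maps on $\R^{m\times m}\times\R^{n\times n}$: first $(A,B)\mapsto(A,\q R^{(i,j)}(A,B))$, which is nonexpansive because the $A$-component is unchanged and the $B$-component is nonexpansive by the first part; and then $(A,B)\mapsto(\q S^{(k,\ell)}(A,B),B)$, which is nonexpansive for the same reason with the roles of the two blocks swapped. The composition of nonexpansive maps is nonexpansive, so $\q T^\xi$ is nonexpansive. One small point to check carefully is that in the first intermediate map the $B$-output depends on the \emph{input} $A$, not a modified one, which matches \eqref{eq: Q stochastic}.

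For the last part, I would characterize $\Fix(\q T^\xi)$ for a single $\xi$: $(A,B)=\q T^\xi(A,B)$ forces, via the second component, $B=\q R^{(i,j)}(A,B)$, i.e.\ $B_{i,j}=\gamma_\q F\q F_{i,j}(A)$, and then the first component gives $A=\q S^{(k,\ell)}(A,B)$, i.e.\ $A_{k,\ell}=\gamma_\q G\q G_{k,\ell}(B)$. Intersecting over all $\xi=(k,\ell,i,j)$ ranging over $[m]^2\times[n]^2$ enforces these identities for \emph{every} index, hence $B=\gamma_\q F\q F(A)$ and $A=\gamma_\q G\q G(B)=\gamma_\q G\q G(\gamma_\q F\q F(A))=\q G\big(\gamma_\q F\q F(A)\big)$ — wait, more precisely $A=\gamma_\q G\q G(\gamma_\q F\q F(A))$, which by \eqref{eq: operator Q} (since $\alpha\in(0,1]$) is exactly $A\in\Fix(\q T)$. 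This proves the middle equality in \eqref{eq: fixed point sets}. For uniqueness and the final equality $\{(A,B)\}$, I would invoke Theorem \ref{thm: convergence grad}: the hypotheses $L_\q F<1/\gamma_\q F$ and $L_\q G<1/\gamma_\q G$ imply $L_\q F L_\q G<1/(\gamma_\q F\gamma_\q G)$, so $\q T$ is contractive and has a unique fixed point $A$; then $(A,\gamma_\q F\q F(A))$ is the unique element of the set, and it is nonempty.

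I do not expect a serious obstacle here — the whole statement is a careful bookkeeping exercise with the product $\infty$-norm. The one place that needs genuine attention is making sure the sharp constant bounds $\gamma_\q F L_\q F<1$ and $\alpha\gamma_\q G L_\q G<1$ (rather than just $\le 1$, or the weaker product condition of Theorem \ref{thm: convergence grad}) are what is actually used to get \emph{nonexpansiveness} of each individual coordinate operator; this is why the lemma's hypotheses are stated separately for $\q F$ and $\q G$ rather than as a single product inequality. The other subtlety is purely notational: keeping track of which argument ($A^t$ vs.\ the updated $B^{t+1}$) feeds into each suboperator in the definition of $\q T^\xi$, so that the fixed-point equations are read off correctly.
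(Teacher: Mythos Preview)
Your proposal is correct and follows essentially the same approach as the paper: entry-wise case analysis for the nonexpansiveness of $\q R^{(i,j)}$ and $\q S^{(k,\ell)}$, then combining these for $\q T^\xi$, and finally reading off the fixed-point equations index by index. The only cosmetic differences are that the paper spells out the $\q S^{(k,\ell)}$ case first and bounds $\q T^{\xi,1}$ and $\q T^{\xi,2}$ separately rather than via your composition-of-block-maps factorization, and that it appeals to Banach directly on the product operator $(\gamma_{\q G}\q G,\gamma_{\q F}\q F)$ for uniqueness rather than routing through Theorem~\ref{thm: convergence grad}; both arguments are equivalent.
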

        
    \begin{proof}
        Let $\xi = (k,\ell, i,j) \in [m]^2 \times [n]^2$ and consider $A, A' \in \R^{m \times m}$ and $B, B' \in \R^{n \times n}$. 
    
        First, we show that $\q S^{(k,\ell)}$ is nonexpansive.  
        We consider two possible cases for indices $k',\ell' \in [m]$. 
        If $(k',\ell') \neq (k,\ell)$, 
        then $\q S^{(k,\ell)}_{k',\ell'}(A,B) = A_{k',\ell'}$ and
        \[
            | \q S^{(k,\ell)}_{k',\ell'}(A,B) -  \q S^{(k,\ell)}_{k',\ell'}(A',B') | 
            = | A_{k',\ell'} - A'_{k',\ell'} | 
            \le \| A - A' \|_\infty
            \le \| (A,B) - (A',B') \|_\infty.
        \]
        Otherwise, we obtain
        \[
            \q S^{(k,\ell)}_{k,\ell}(A,B) 
            =  (1- \alpha )A_{k,\ell} + \alpha \gamma_\q G \q G_{k,\ell}(B) 
        \]
        and
         \begin{align*}
            | \q S^{(k,\ell)}_{k,\ell}(A,B) - \q S^{(k,\ell)}_{k,\ell}(A',B') | 
            & = | (1- \alpha )A_{k,\ell} 
            + \alpha \gamma_\q G \q G_{k,\ell}(B) - [(1-\alpha )A'_{k,\ell} + \alpha \gamma_\q G \q G_{k,\ell}(B')] | \\
            & \leq (1- \alpha ) |A_{k,\ell} - A'_{k,\ell}|
            + \alpha \gamma_{\q G}|\q G_{k,\ell}(B) - \q G_{k,\ell}(B')|\\
            & < (1 - \alpha) \|A - A'\|_\infty
            + \tfrac{\alpha}{L_{\q G}} \|\q G(B) - \q G(B')\|_\infty\\
            & \leq (1 - \alpha) \| A - A' \|_\infty 
            + \alpha \| B - B' \|_\infty \\
            & \le \| (A,B) - (A',B') \|_\infty.
        \end{align*}
        In summary, we conclude
        \begin{equation}\label{eq: S_kl nonexpansive}
            \| \q S^{(k,\ell)}(A,B) - \q S^{(k,\ell)}(A',B') \|_\infty 
            \le 
            \| (A,B) - (A',B') \|_\infty.
        \end{equation}
        Similar derivations show that $\q R^{(i,j)}$ is nonexpansive.
        Consequently, 
        $\q T^{\xi,2} = \q R^{(i,j)}$ is nonexpansive. 
        Also $\q T^{\xi,1}$ is nonexpansive, since
        \begin{align}
            \| \q T^{\xi,1}(A,B) - \q T^{\xi,1}(A',B') \|_\infty
            & = \| \q S^{(k,\ell)}(A, \q R^{(i,j)} (A,B) ) - \q S^{(k,\ell)}(A', \q R^{(i,j)} (A',B') ) \|_\infty \\
            & \le \| (A, \q R^{(i,j)} (A,B)) - (A',\q R^{(i,j)} (A',B')) \|_\infty \\
            & = \max\{ \| A - A' \|_\infty, \| \q R^{(i,j)} (A,B) - \q R^{(i,j)} (A',B') \|_\infty \} \\
            & \le \max\{ \| A - A' \|_\infty, \| (A,B) - (A',B') \|_\infty \} \\
            & = \| (A,B) - (A',B') \|_\infty.
            \label{eq : Q1 nonepxansive}
        \end{align} 
        Utilizing \eqref{eq : Q1 nonepxansive} 
        and \eqref{eq: S_kl nonexpansive} we get finally
        \begin{align*}
            & \| \q T^{\xi}(A,B) - \q T^{\xi}(A',B') \|_\infty \\
            & = \max\left\{ \| \q T^{\xi,1}(A,B) - \q T^{\xi,1}(A',B') \|_\infty, \| \q T^{\xi,2}(A,B) - \q T^{\xi,2}(A',B') \|_\infty \right\} \\
            & \le \| (A,B) - (A',B') \|_\infty.
        \end{align*}
        To show that \eqref{eq: fixed point sets} holds, 
        we consider $(A,B) \in \bigcap_{\xi \in [m]^2 \times [n]^2} \Fix(\q T^{\xi})$. 
        Then,
        for every $\xi = (k,\ell, i,j) \in [m]^2 \times [n]^2$
        we have
        \[
            B_{i,j} = \q T^{\xi,2}_{i,j}(A,B) 
            = \q R^{(i,j)}_{i,j}(A,B)
            = B_{i,j} 
            + ( \gamma_\q F \q F_{i,j}(A) - B_{i,j}) 
            = \gamma_\q F \q F_{i,j}(A),
        \] 
        so that $B = \gamma_\q F \q F(A)$. 
        With $(A,B) \in \Fix(\q T^\xi)$, this yields
        $\q R^{(i,j)}(A,B) = B = \gamma_\q F \q F(A)$.
        By definition of $\q T$, we finally obtain
        \begin{align}
            A_{k,\ell} 
            & = \q T^{\xi,2}_{k,\ell}(A,B)
            = \q S^{(k,\ell)}_{k,\ell} (A, \q R^{(i,j)} (A,B) )
            = \q S^{(k,\ell)}_{k,\ell} (A,  \gamma_\q F \q F(A)) \\
            & = (1- \alpha )A_{k,\ell} + \alpha \gamma_\q G \q G_{k,\ell}(  \gamma_\q F \q F(A))
            = \q T_{k,\ell}(A)
            \label{eq: fix A under Qkl}
        \end{align}
       for all $(k,l) \in [m]^2$.
        Therefore, 
        $(A,B) \in \bigcap_{\xi \in [m]^2 \times [n]^2} \Fix(\q T^{\xi})$ implies $A \in \Fix(\q T)$ 
        and $B = \gamma_\q F \q F(A)$. 
        The reverse inclusion follows analogously.
        
        Finally, 
        we note that by \eqref{eq: fix A under Qkl} also holds
        $A = \gamma_\q G \q G(\gamma_\q F \q F(A)) = \gamma_\q G \q G(B)$.
        Hence,
        $(A,B)$ is a fixed point of operator $(\gamma_{\q G} \q G, \gamma_{\q F} \q F)$. Since $L_{\q F} < \frac{1}{\gamma_\q F}$ and $L_{\q G} < \frac{1}{\gamma_\q G}$, we know that this operator
        has a unique fixed point $(A,B)$ 
        by Banach's Fixed Point Theorem.
    \end{proof}

Unfortunately,  we cannot use the convergence results shown for RFI in \cite{hermer2019random}, since those were only established for paracontractive operators $\q T$, i.e. operators
fulfilling 
$$
\|\q T(A) - A' \|_\infty < \| A - A'\|_\infty
\quad \text{for all} \quad A \notin \Fix(\q T), \ A' \in \Fix(\q T).
$$
Our operator $\q T^{\xi}$ is nonexpansive, but not paracontractive.  
Unfortunately, establishing paracontractiveness for the $\ell_\infty$-norm is not possible due to the following considerations: 
consider $A^t$ that has two entries $(k,\ell)$ and $(k', \ell')$ 
such that 
\[
    |A^t_{k,\ell} - A_{k,\ell}| 
    = |A^t_{k',\ell'} - A_{k',\ell'}| 
    = \|A^t - A \|_\infty.
\]
Due to the single entry updates performed by $\q T^\xi$, 
even if $(k_t, \ell_t) = (k',\ell')$ decreases the error 
$|A^{t+1}_{k',\ell'} - A_{k',\ell'}| < |A^t_{k',\ell'} - A_{k',\ell'}|$, 
we still have $A^{t+1}_{k,\ell} = A^t_{k,\ell}$ 
and consequently $\| A^{t+1} - A\|_\infty = \|A^t - A \|_\infty$. 
The extreme scenario when all entries of $A^t - A$ have the same absolute value, shows that a unless (nonstochastic) $\q T$ is used, the error will remain unchanged.  	

Nevertheless, for convenience, we like to mention that
it is possible to obtain the following weaker convergence guarantees by combining Lemma \ref{l: properties stochastic} and \cite[Thm.~2.17]{hermer2020random}.
    
    \begin{theorem}    \label{thm: convergence rfi}
        Let $\q F$ and $\q G$ be Lipschitz continuous with  constants $L_{\q F}$ and $L_{\q G}$, 
        and  $0 < L_\q F < \frac{1}{\gamma_\q F}$ 
        and $0 < L_\q G < \frac{1}{\gamma_\q G}$. 
        Let $( (A^t, B^t) )_{t \in \N}$ be a sequence
        of random variables 
        generated by Algorithm \ref{alg: rfi}, 
        and $\q P^t$ the probability measure 
        corresponding to the law of $(A^t, B^t)$. 
        Then,
        $\tfrac{1}{T} \sum_{t=0}^{T-1} \q P^t$ 
        converges to a point measure $\delta_{(A,B)}$ 
        in Prokhorov-Levy metric as $T \to \infty$, 
        where $(A,B)$ is the unique fixed point 
        in \eqref{fix1}.
   \end{theorem}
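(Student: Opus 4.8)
The plan is to reduce Theorem~\ref{thm: convergence rfi} to the abstract theory of random function iterations in \cite{hermer2020random}: no new estimate is needed, the statement follows by combining Lemma~\ref{l: properties stochastic} with \cite[Thm.~2.17]{hermer2020random}. First I would recall the setting of that reference: one is given a measurable family of self-maps $(\q T^\xi)_{\xi\in\Xi}$ of a metric space, an i.i.d.\ sequence $(\xi_t)_{t\in\N}$ of indices, and the Markov chain $(A^{t+1},B^{t+1})=\q T^{\xi_t}(A^t,B^t)$; under the assumptions that each $\q T^\xi$ is nonexpansive and that $\bigcap_{\xi\in\Xi}\Fix(\q T^\xi)$ is a single point, Theorem~2.17 there asserts exactly that the averaged laws $\tfrac1T\sum_{t=0}^{T-1}\q P^t$ converge to a point mass in the Prokhorov-Levy metric. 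So the task is to put our operators into this framework and to check these two assumptions.

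In our case the index set $\Xi=[m]^2\times[n]^2$ is finite, so all measurability requirements are vacuous; the maps $(A,B)\mapsto\q T^\xi(A,B)$ are continuous, being compositions of the Lipschitz maps $\q F,\q G$ with affine single-entry updates; and the sampling prescribed in Algorithm~\ref{alg: rfi} is i.i.d.\ uniform on $\Xi$ by construction. Nonexpansiveness of every $\q T^\xi$ with respect to the product norm $\|(A,B)\|_\infty=\max\{\|A\|_\infty,\|B\|_\infty\}$ is the first assertion of Lemma~\ref{l: properties stochastic}, which rests on the standing bounds $0<L_\q F<1/\gamma_\q F$ and $0<L_\q G<1/\gamma_\q G$. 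The singleton property of the common fixed-point set is the second assertion of the same lemma: there $\bigcap_{\xi\in\Xi}\Fix(\q T^\xi)=\{(A,\gamma_\q F\q F(A)):A\in\Fix(\q T)\}=\{(A,B)\}$ is established, together with the fact that its unique element $(A,B)$ is exactly the fixed point of problem \eqref{fix1} (again by Banach's fixed point theorem under the same Lipschitz bounds). With both hypotheses verified, \cite[Thm.~2.17]{hermer2020random} applies and yields $\tfrac1T\sum_{t=0}^{T-1}\q P^t\to\delta_{(A,B)}$ in the Prokhorov-Levy metric.

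It is worth noting, for orientation, the mechanism behind the cited theorem in our situation and why only this averaged — rather than pathwise — conclusion is reachable. Since every $\q T^\xi$ fixes $(A,B)$ and is nonexpansive, $\|(A^t,B^t)-(A,B)\|_\infty\le\|(A^0,B^0)-(A,B)\|_\infty$ almost surely for all $t$, so the laws $\q P^t$ and hence their running averages are supported in a fixed bounded ball and form a tight family; every weak subsequential limit of $\tfrac1T\sum_{t<T}\q P^t$ is invariant under the Markov operator associated with $(\q T^\xi)_\xi$, and nonexpansiveness together with uniqueness of the common fixed point forces that invariant measure to be $\delta_{(A,B)}$. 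Pathwise convergence is out of reach precisely because, as discussed just before the theorem, $\q T^\xi$ fails to be paracontractive for $\|\cdot\|_\infty$, which is the hypothesis behind the stronger results of \cite{hermer2020random}. Accordingly, the only delicate point in the proof is bookkeeping: making sure that the abstract hypotheses of \cite[Thm.~2.17]{hermer2020random} are matched correctly to our operators $\q T^\xi$ — in particular that its notion of common fixed-point set coincides with $\bigcap_\xi\Fix(\q T^\xi)$ and that its point-mass conclusion really needs only nonexpansiveness plus the singleton property. Everything substantive has already been supplied by Lemma~\ref{l: properties stochastic}.
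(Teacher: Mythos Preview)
Your proposal is correct and takes essentially the same approach as the paper, which does not give a detailed proof but simply states that the theorem follows ``by combining Lemma~\ref{l: properties stochastic} and \cite[Thm.~2.17]{hermer2020random}.'' Your write-up fleshes out exactly this combination, verifying the nonexpansiveness and singleton common fixed-point hypotheses from Lemma~\ref{l: properties stochastic} before invoking the cited abstract result.
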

   
    To obtain stronger convergence guarantees, 
    we instead consider convergence in the $\ell_2$-norm.
    To this end, we equip the space $\R^{m \times m} \times \R^{n \times n}$ with the norm 
    \[
        \| (A,B) \|_2
        \coloneqq 
         \big( \| A \|_2^2 + \| B \|_2^2 \big)^\frac12.
    \]
    Indeed, by the next theorem, we get linear convergence rate of RFI. 	
    
    \begin{theorem}\label{thm: convergence stoch}
    Let $\q F$ and $\q G$ be Lipschitz continuous with  constants $L_{\q F}$ and $L_{\q G}$, and
    $0 < L_\q F \le  \frac{\sqrt \alpha}{\sqrt 2 m \gamma_{\q F}}$
    and 
    $0 < L_\q G \le \frac{1}{n \gamma_{\q G}}$, where $n,m > 1$. 
    Let $(A,B)$ denote the unique fixed point of $(\gamma_\q G \q G, \gamma_\q F \q F)$. 
    Then the sequence $( (A^t, B^t) )_{t \in \N}$ generated by Algorithm \ref{alg: rfi}  converges almost surely to $(A,B)$ 
    and fulfills
    \[
        \bb E_{\xi_t} \left[ \| (A^{t+1}, B^{t+1}) - (A,B)\|_2^2 \right]
        \le 
        L \| (A^t, B^t) - (A,B) \|_2^2
        \quad \text{for all} \quad   t \in \N,   
    \]	
    where $\bb E_{\xi_t}$ denotes the expectation with respect to $\xi_t$ 
    and
    \[
        L
        \coloneqq
        \max \left\{ 1 - \frac{\alpha}{m^2} + (1 + \alpha \gamma^2_\q G L^2_\q G ) \gamma^2_\q F L^2_\q F  , (1 + \alpha \gamma^2_\q G L^2_\q G) \left(1 - \frac{1}{n^2} \right) \right\} < 1.
    \]
    \end{theorem}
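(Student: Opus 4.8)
The plan is to fix a time step $t\in\N$, condition on $(A^t,B^t)$ — which is a function of $\xi_0,\dots,\xi_{t-1}$ and hence independent of $\xi_t=(k_t,\ell_t,i_t,j_t)$ — and estimate $\E_{\xi_t}\bigl[\|(A^{t+1},B^{t+1})-(A,B)\|_2^2\bigr]$. Write $(k,\ell,i,j)$ for $\xi_t$ and recall from Lemma~\ref{l: properties stochastic} that the fixed point $(A,B)$ exists, is unique, and satisfies $B=\gamma_\q F\q F(A)$ and $A=\gamma_\q G\q G(B)$. The structural fact I would exploit is that $\q R^{(i,j)}$ changes only the $(i,j)$ entry of $B$ and $\q S^{(k,\ell)}$ only the $(k,\ell)$ entry of $A$, which yields the exact identities
\[
\|B^{t+1}-B\|_2^2=\|B^t-B\|_2^2-(B^t_{i,j}-B_{i,j})^2+(B^{t+1}_{i,j}-B_{i,j})^2,
\]
\[
\|A^{t+1}-A\|_2^2=\|A^t-A\|_2^2-(A^t_{k,\ell}-A_{k,\ell})^2+(A^{t+1}_{k,\ell}-A_{k,\ell})^2.
\]
Here $B^{t+1}_{i,j}-B_{i,j}=\gamma_\q F\bigl(\q F_{i,j}(A^t)-\q F_{i,j}(A)\bigr)$, while the fixed-point relation gives $A^{t+1}_{k,\ell}-A_{k,\ell}=(1-\alpha)(A^t_{k,\ell}-A_{k,\ell})+\alpha\gamma_\q G\bigl(\q G_{k,\ell}(B^{t+1})-\q G_{k,\ell}(B)\bigr)$, so convexity of $x\mapsto x^2$ yields $(A^{t+1}_{k,\ell}-A_{k,\ell})^2\le(1-\alpha)(A^t_{k,\ell}-A_{k,\ell})^2+\alpha\gamma_\q G^2\bigl(\q G_{k,\ell}(B^{t+1})-\q G_{k,\ell}(B)\bigr)^2$.

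Next I would take the expectation in two stages, respecting the coupling $A^{t+1}\!\to B^{t+1}\!\to(i,j)$. First, conditioning on $(i,j)$ so that $B^{t+1}$ is frozen and averaging over $(k,\ell)$ uniform in $[m]^2$ turns the single-entry terms into Frobenius norms scaled by $1/m^2$; $L_\q G$-Lipschitz continuity of $\q G$ then gives
\[
\E_{k,\ell}\!\bigl[\|A^{t+1}-A\|_2^2\bigm|(i,j)\bigr]\le\Bigl(1-\tfrac{\alpha}{m^2}\Bigr)\|A^t-A\|_2^2+\tfrac{\alpha\gamma_\q G^2L_\q G^2}{m^2}\|B^{t+1}-B\|_2^2 .
\]
Second, averaging the $B$-identity over $(i,j)$ uniform in $[n]^2$ and using $L_\q F$-Lipschitz continuity of $\q F$ gives
\[
\E_{i,j}\!\bigl[\|B^{t+1}-B\|_2^2\bigr]\le\Bigl(1-\tfrac1{n^2}\Bigr)\|B^t-B\|_2^2+\tfrac{\gamma_\q F^2L_\q F^2}{n^2}\|A^t-A\|_2^2 .
\]
Substituting the latter into the former and adding the two contributions produces a bound $c_A\|A^t-A\|_2^2+c_B\|B^t-B\|_2^2$ with $c_A=1-\tfrac{\alpha}{m^2}+\tfrac{\gamma_\q F^2L_\q F^2}{n^2}\bigl(1+\tfrac{\alpha\gamma_\q G^2L_\q G^2}{m^2}\bigr)$ and $c_B=\bigl(1-\tfrac1{n^2}\bigr)\bigl(1+\tfrac{\alpha\gamma_\q G^2L_\q G^2}{m^2}\bigr)$. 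Using $m,n>1$ to drop the $1/m^2$ and $1/n^2$ factors where favourable gives $c_A\le1-\tfrac{\alpha}{m^2}+(1+\alpha\gamma_\q G^2L_\q G^2)\gamma_\q F^2L_\q F^2$ and $c_B\le(1+\alpha\gamma_\q G^2L_\q G^2)\bigl(1-\tfrac1{n^2}\bigr)$, hence $c_A,c_B\le L$, which is the claimed one-step contraction in expectation.

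It then remains to check $L<1$ and to upgrade to almost-sure convergence. From $L_\q G\le\tfrac1{n\gamma_\q G}$ one has $\gamma_\q G^2L_\q G^2\le\tfrac1{n^2}\le1$, so $1+\alpha\gamma_\q G^2L_\q G^2\le1+\tfrac{\alpha}{n^2}<2$; combined with $\gamma_\q F^2L_\q F^2\le\tfrac{\alpha}{2m^2}$ (from $L_\q F\le\tfrac{\sqrt\alpha}{\sqrt2\,m\gamma_\q F}$), the first entry of the maximum is $\le1-\tfrac{\alpha}{m^2}+\bigl(1+\tfrac{\alpha}{n^2}\bigr)\tfrac{\alpha}{2m^2}<1$ because $1+\tfrac{\alpha}{n^2}<2$, and the second entry is $\le\bigl(1+\tfrac{\alpha}{n^2}\bigr)\bigl(1-\tfrac1{n^2}\bigr)<1$ since $\alpha<1$. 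For the almost-sure statement, set $e_t\coloneqq\|(A^t,B^t)-(A,B)\|_2^2$; the contraction reads $\E[e_{t+1}\mid(A^t,B^t)]\le Le_t$, so $(L^{-t}e_t)_{t\in\N}$ is a nonnegative supermartingale, converges a.s.\ to a finite limit, and therefore $e_t\to0$ a.s.\ (equivalently, $\sum_t\E[e_t]\le e_0/(1-L)<\infty$ forces $\sum_te_t<\infty$ a.s.).

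The main obstacle is entirely bookkeeping: one must average over $(k,\ell)$ with $B^{t+1}$ held fixed \emph{before} averaging over $(i,j)$, since the two single-entry updates are coupled through the dependence of $A^{t+1}$ on $B^{t+1}$, and one must carry the cross term $\gamma_\q G^2L_\q G^2\gamma_\q F^2L_\q F^2/(m^2n^2)$ through the algebra so that it can be absorbed into the clean coefficient. The only genuinely quantitative point is that the constants $\sqrt2\,m$ and $n$ in the hypotheses — rather than merely $m$ and $n$ — are precisely what is needed to drive the final constant strictly below one.
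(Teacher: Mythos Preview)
Your approach is essentially the paper's: exploit the single-entry identities, apply convexity to the $(k,\ell)$-entry, average first over $(k,\ell)$ with $B^{t+1}$ frozen and then over $(i,j)$, combine, verify $L<1$, and finish with a supermartingale argument. There is, however, one bookkeeping slip in your intermediate bounds that you should correct.

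In the paper, $L_\q F$ and $L_\q G$ are Lipschitz constants with respect to $\|\cdot\|_\infty$ (this is how they are established in Sections~\ref{sec: wasserstein} and~\ref{sec: kernel} and how the paper's proof uses them). After averaging over $(k,\ell)$ you arrive at the term $\frac{\alpha\gamma_\q G^2}{m^2}\|\q G(B^{t+1})-\q G(B)\|_2^2$; bounding this by $\frac{\alpha\gamma_\q G^2L_\q G^2}{m^2}\|B^{t+1}-B\|_2^2$ would require $\q G$ to be $L_\q G$-Lipschitz in $\|\cdot\|_2$, which is not what is assumed. The paper instead uses the norm comparisons $\|\cdot\|_2\le m\|\cdot\|_\infty$ and $\|\cdot\|_\infty\le\|\cdot\|_2$ to get
\[
\|\q G(B^{t+1})-\q G(B)\|_2^2 \le m^2\,\|\q G(B^{t+1})-\q G(B)\|_\infty^2 \le m^2 L_\q G^2\,\|B^{t+1}-B\|_\infty^2 \le m^2 L_\q G^2\,\|B^{t+1}-B\|_2^2,
\]
so the $m^2$ cancels and the coefficient is $\alpha\gamma_\q G^2L_\q G^2$, not $\alpha\gamma_\q G^2L_\q G^2/m^2$; the analogous step for $\q F$ produces $\gamma_\q F^2L_\q F^2$ without a $1/n^2$. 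In other words, your ``dropping the $1/m^2$ and $1/n^2$ factors'' step is not a relaxation but exactly the correction needed: the sharper intermediate bounds you wrote are not available under the stated hypotheses, while the bounds \emph{after} dropping are precisely what the $\|\cdot\|_\infty$-Lipschitz assumption yields directly. With this adjustment your argument matches the paper line by line; your almost-sure convergence via $L^{-t}e_t$ (or the summability argument) is a legitimate alternative to the paper's Fatou step.
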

    
    \begin{proof}
        Let $\xi_t = (k_t, \ell_t, i_t, j_t) \in [m]^2 \times [n]^2$ 
        be as in Algorithm \ref{alg: rfi}. 
        Then, 
        by construction, 
        \begin{align*}
            \| A^{t+1} - A \|_2^2 
            & = \sum_{k,\ell = 1}^{m} 
            | \q S^{(k_t,\ell_t)}_{k,\ell}(A^t,B^{t+1}) - A_{k,\ell}|^2 \\
            & =  
            \sum_{\substack{k,\ell = 1\\ (k,\ell) \neq (k_t, \ell_t)}}^{m} 
            | A^t_{k,\ell} - A_{k,\ell}|^2
            + |(1 - \alpha) A^t_{k_t,\ell_t} 
            + \alpha \gamma_{\q G} \q G_{k_t,\ell_t}(B^{t+1}) - A_{k_t, \ell_t} |^2 \\
            & = \| A^t - A \|_2^2 + |(1- \alpha ) A^t_{k_t,\ell_t} + \alpha \gamma_\q G \q G_{k_t,\ell_t}(B^{t+1}) - A_{k_t, \ell_t} |^2 - | A^t_{k_t,\ell_t} - A_{k_t,\ell_t}|^2.
        \end{align*} 
        Since $B^{t+1}$ only depends on $i_t, j_t$, we can take the expectation with respect to $k_t$ and $\ell_t$ yielding 
        \begin{align*}
            \bb E_{k_t,\ell_t} \| A^{t+1} - A \|_2^2 
            & = \| A^t - A \|_2^2 + \frac{1}{m^2} \| (1- \alpha ) A^t + \alpha \gamma_\q G \q G(B^{t+1}) - A \|_2^2 - \frac{1}{m^2} \| A^t - A \|_2^2.
        \end{align*}
        Now, we transform the second term. Since $(A,B)$ is a fixed point of 
        $(\gamma_\q G \q G, \gamma_\q F \q F)$, we can write $A = \gamma_\q G \q G(B)$. This gives
        \[
            \| (1- \alpha ) A^t  + \alpha \gamma_\q G \q G(B^{t+1}) - A \|_2^2 
            = 
            \| (1- \alpha ) (A^t - A) 
            + \alpha \left(\gamma_\q G \q G(B^{t+1}) - \gamma_\q G \q G(B) \right) \|_2^2.
        \]
        Using the convexity of $\| \cdot \|_2^2$, 
        we bound
        \[
            \| (1- \alpha ) A^t  + \alpha \gamma_\q G \q G(B^{t+1}) - A \|_2^2
            \le 
            (1 - \alpha) \| A^t - A \|_2^2 
            + \alpha \| \gamma_\q G \q G(B^{t+1}) - \gamma_\q G \q G(B) \|_2^2.
        \] 
        Next, we use that $\q G$ is $L_\q G$-Lischitz continuous
        and the inequalities 
        \[
            \| A \|_2^2 \le m^2 \| A \|_\infty^2 
            \quad \text{and} \quad 
            \| B \|_\infty \le \| B \|_2,
            \quad \text{for all} 
            \quad 
            A \in \R^{m \times m}, 
            B \in \R^{n \times n},
        \]
        to obtain
        \begin{align*}
            \| (1- \alpha ) A^t  + \alpha \gamma_\q G \q G(B^{t+1}) - A \|_2^2
            & \le (1- \alpha ) \| A^t - A \|_2^2 
            + \alpha m^2 \gamma^2_\q G \| \q G(B^{t+1}) - \q G(B) \|_\infty^2 \\
            & \le (1- \alpha ) \| A^t - A \|_2^2 
            + \alpha m^2 \gamma^2_\q G L^2_\q G \| B^{t+1} - B \|_\infty^2 \\
            & \le (1- \alpha ) \| A^t - A \|_2^2 
            + \alpha m^2 \gamma^2_\q G L^2_\q G \| B^{t+1} - B \|_2^2.
        \end{align*}
        This yields 
        \begin{equation}\label{eq: Ekl}
            \bb E_{k_t,\ell_t} \left[\| A^{t+1} - A \|_2^2 \right]
            \leq 
            \left( 1 - \frac{\alpha}{m^2} \right) \| A^t - A \|_2^2
            + \alpha \gamma^2_\q G L^2_\q G \| B^{t+1} - B \|_2^2.
        \end{equation}
        Repeating these steps for $ \bb E_{i_t,j_t} \left[\| B^{t+1} - B \|_2^2\right]$
        provides
        \begin{equation}\label{eq: Eij}
            \bb E_{i_t, j_t} \left[ \| B^{t+1} - B \|_2^2 \right]
            \le 
            \left( 1 - \frac{1}{n^2} \right) \| B^t - B \|_2^2 
            + \gamma^2_\q F L^2_\q F \| A^t - A \|_2^2
        \end{equation}
        for all $t \in \N$.
        Combining \eqref{eq: Ekl} and \eqref{eq: Eij} yields
        \begin{align*}
            & \bb E_{\xi_t} \| (A^{t+1},B^{t+1}) -(A,B) \|_2^2
            =  \bb E_{i_t, j_t} \left[ \bb E_{k_t,\ell_t} \| A^{t+1} - A \|_2^2 \right] 
            + \bb E_{i_t, j_t} \| B^{t+1} - B \|_2^2 \\
            & \qquad 
            \le \left( 1 - \frac{\alpha}{m^2} \right) \| A^t - A \|_2^2 
            + (1 + \alpha \gamma^2_\q G L^2_\q G ) \bb E_{i_t, j_t} \| B^{t+1} - B \|_2^2 \\
            & \qquad 
            \le \left( 1 - \frac{\alpha}{m^2} +  (1 + \alpha \gamma^2_\q G L^2_\q G ) \gamma^2_\q F L^2_\q F  \right) \| A^t - A \|_2^2
            + (1 + \alpha \gamma^2_\q G L^2_\q G) \left(1 - \frac{1}{n^2} \right) \| B^t - B \|_2^2 \\
            & \qquad 
            \le L \| (A^{t},B^{t}) -(A,B) \|_2^2
        \end{align*}
        for all $t \in \N$.
        Finally, 
        we show that the constant $L$ is less than $1$ 
        by the choice of $\gamma_\q F$ and $\gamma_\q G$. 
        We have
        \[
        (1 + \alpha \gamma^2_\q G L^2_\q G) \left(1 - \frac{1}{n^2} \right)
        \le \left(1 + \frac{1}{n^2} \right) \left(1 - \frac{1}{n^2} \right)
        = 1 - \frac{1}{n^4} < 1,
        \]
        and
        \begin{align*}
            1 - \frac{\alpha}{m^2} + \left(1 + \alpha \gamma^2_\q G L^2_\q G \right) \gamma^2_\q F L^2_\q F
            \le 1 - \frac{\alpha}{m^2} + \left(1 + \frac{1}{n^2}\right) \frac{\alpha}{2 m^2}
            = 1 - \left(1 - \frac{1}{n^2}\right) \frac{\alpha}{2 m^2}
            < 1.
        \end{align*}	        
        From these inequality, we conclude that
        the sequence $Y_t \coloneqq \| (A^{t},B^{t}) -(A,B) \|_2^2$ 
        is a nonnegative supermartingale.
        Thus, 
        by \cite[Cor.\ 13.3.3]{athreya06measure}, 
        it converges almost surely to a finite random variable $Y$.
        It is nonnegative as limit of a nonnegative sequence 
        and Fatou's lemma \cite[Thm.~12.2.2]{athreya06measure} 
        implies that 
        \[
            0 \le \bb E \left[ Y \right]
            = \lim_{t \to \infty} \bb E \left[ Y_t \right]
            \le \bb E \left[Y_0 \right] \, \lim_{t \to \infty} L^t = 0. 
        \]  
        Since $Y$ is nonnegative, 
        it implies that $Y = 0$ almost surely. 
        Thus, 
        $\| (A^t, B^t) - (A,B) \|_2 \to 0$ 
        and $(A^t, B^t) \to (A,B)$ almost surely as $t \to \infty$.
    \end{proof}

    The inequality for expectation obtained in Theorem \ref{thm: convergence stoch} states that $\q T^\xi$ is paracontractive on average, which is sufficient for convergence. This is weaker than the notion of almost-firmly nonexpansiveness on average used for convergence in \cite{hermer2020random}.  
    
    \begin{remark}
       We could also consider joint stochastic updates given by
        \[
       (A^{t+1},B^{t+1}) \coloneqq \left( \q S^{(k,\ell)}(A^t, B^t), \q R^{(i,j)}(A^t,B^t) \right).
        \]
        Under the same assumptions, an analogy of Theorem \ref{thm: convergence stoch} can be proved with a better convergence rate  
        \[
        L\coloneqq  \max \left\{ 1 - \tfrac{\alpha}{m^2} + \alpha \gamma^2_\q F L^2_\q F , 1 - \tfrac{\alpha}{n^2} + \alpha \gamma^2_\q G L^2_\q G\right\},
        \]
        which scales as $1 - \max^{-2}\{ n,m \}$ in comparison to $L \approx 1 - \max^{-2}\{ n^2, m \}$ in Theorem~\ref{thm: convergence stoch}.   
    \end{remark}

    \section{Optimal Transport Distances} \label{sec: wasserstein}
   In the next three sections, we want to apply our findings from the previous section to special functions $\q F$ and $\q G$
   depending on given unlabeled data.
   For samples $X_i \in \R^m_{\ge 0}$, $i = 1,\ldots, n$, let
$$
X = (X_1 \, \ldots \, X_n) =  
\begin{pmatrix}
    (X^1)^\tT\\
    \vdots
    \\
    (X^m)^\tT
\end{pmatrix} \in \mathbb R^{m \times n},
$$
where  $X^k \in \R^m$, $k = 1,\ldots, m$ characterizes the $k$-th feature of the samples. We assume that $X_i \not = X_j$ for $i \not = j$ and $X^k \not = X^\ell$ for $k \not = \ell$, which
can be simply achieved by canceling repeating columns and rows in $X \in \R^{m \times n}$.
We also require normalization of $X$, which is achieved by considering two copies of the dataset, $\uX$ with normalized rows and $\overline{X}$ with normalized columns. Then,
\begin{equation}\label{normalization}
\uX \1_n = \1_m \quad \text{and} \quad \overline{X}^\tT \1_m= \1_n. 
\end{equation}

\subsection{Optimal Transport Distances From Metric Matrices}
     In this section, we are interested in \emph{metric matrices}
     \begin{equation}\label{eq: metric matrices}
        \bb D_m 
        \coloneqq
        \{ A \in \bb R^{m \times m}_{\ge 0} : 
        A = A^\tT, 
        A_{k,k} = 0,
       A_{k,\ell} > 0, k \neq \ell,
       \text{ and } 
       \\
        A_{k,s} \le A_{k,\ell} + A_{\ell,s}
      \},
    \end{equation}
    and their closure, the pseudo-distance matrices
    \[
        \overline{\bb D}_m 
        \coloneqq
        \{ A \in \bb R^{m \times m}_{\ge 0} : 
        A = A^T, 
        A_{k,k} = 0, 
        \text{ and } A_{k,s} \le A_{k,\ell} + A_{\ell,s},
      \}.
    \]
    To emphasize the dimensions, we use the abbreviations
    $$
\mathbb A \coloneqq \bb D_m \quad \text{and} \quad 
\mathbb B \coloneqq \bb D_n.
$$

For $A \in \overline{\mathbb A}$ and $x,y \in \R^m_{\ge 0}$ with $x^\tT \1_m = y^\tT \1_m = 1$,
the \emph{optimal transport} (OT) \emph{pseudo-distance} is given by 
    \begin{equation}\label{eq: optimal transport}
    W_A(x, y) \coloneqq \min_{P \in \Pi(x,y)} \ \langle A, P \rangle,
    \end{equation}
    where
    $$
     \Pi(x,y) \coloneqq \{
    P \in \R^{m \times m}_{\ge 0}, \ 
    P \1_m = x, \; 
    P^\tT \1_m =y\} .   
    $$    
Indeed $W_A$ becomes a distance for $A \in \mathbb A$.
We are interested in functions
$\mathcal F: \overline{\mathbb A}  \to \R^{n \times n}$ 
and
$\mathcal G:\overline{\mathbb B}  \to \R^{m \times m}$ of the form
\begin{align}\label{eq:kernelregularized_w}
\mathcal F(A) \coloneqq W_A (\uX) + R_n 
\quad \text{and} \quad 
\mathcal G(B) \coloneqq  W_B (\overline{X}) + R_m ,
\end{align}
where $R_m \in \overline{\mathbb A}$ and $R_n \in  \overline{\mathbb B}$ are non-zero matrices, and
    \begin{align} \label{eq:wasser}
	W_A (\uX) \coloneqq \Big(  W_A(\uX_i,\uX_j)  \Big)_{i,j=1}^n
  \quad \text{and} \quad
    W_B (\overline{X}) \coloneqq \Big( W_B (\overline{X}^k, \overline{X}^{\ell}) \, \Big)_{k,\ell=1}^m.
	\end{align} 
By definition of the Wasserstein distance, it follows immediately that $\uX \mapsto W_A (\uX)$ is positively homogeneous as a function in $A$ and inclusion of $R_n$ in \eqref{eq:kernelregularized_w} counteracts issues with positive homogenuity described by Remark \ref{rem:pos_def}.

\begin{remark} \label{peyre}   
In \cite{huizing2022unsupervised}, the authors were interested in the mappings
    \begin{equation}
        \begin{aligned}\label{eq: wasserstein peyre}
            A \mapsto W_A(\uX) + \| A \|_\infty R_n 
            \quad \text{and} \quad
            B \mapsto W_B(\overline X) + \| B \|_\infty R_m ,
        \end{aligned}
    \end{equation}
    which are positively homogeneous and therefore, by  Remark \ref{rem:pos_def}, are not interesting with respect to 
    Algorithm \ref{alg:fix}.
    \\
   However, these maps are considered in the context of  Algorithm \ref{alg:fix2}, in which case the iterates fulfill $\|A^t\|_\infty = \|B^t\|_\infty = 1$ for all
    $t \in \N$. 
   In this case, \eqref{eq: wasserstein peyre} coincides with our setting \eqref{eq:kernelregularized_w}.
    \end{remark}

The following lemma summarizes properties of $\q F$ which hold similarly for $\q G$. Besides the Lipschitz property, it is important that $\q F$ maps into the correct domain $\mathbb B$, resp., its closure.

\begin{lemma}\label{prop: wasserstein properties}
        The function $\q F$  defined by \eqref{eq:kernelregularized_w} has the following properties: 
        \begin{enumerate}
            \item $\q F$ is 1-Lipschitz on $\overline{\bb A}$;
            \item $\q F$ maps $\overline{\bb A}$ to $\{ B \in \overline{\bb B}: B_{i,j} \ge (R_n)_{i,j} \ge 0, i \neq j \}$ with strict last inequality
            and $B \in \mathbb B$  if $R_n \in \mathbb B$. 
            \item $\| \q F(A) \|_\infty \ge \| R_n \|_\infty$ 
            for all $A \in \overline{\bb A}$.           
        \end{enumerate}
    \end{lemma}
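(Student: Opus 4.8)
The plan is to verify the three properties in order, with the bulk of the work going into the Lipschitz estimate, which should follow from the linear-programming structure of $W_A$.

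\textbf{Property 1 (1-Lipschitz).} For fixed $i,j$, the map $A \mapsto W_A(\uX_i,\uX_j) = \min_{P \in \Pi(\uX_i,\uX_j)} \langle A, P\rangle$ is a pointwise minimum of linear functions of $A$, hence concave; more importantly, for a minimizer $P^\ast$ at $A$ we have $W_{A'}(\uX_i,\uX_j) \le \langle A', P^\ast\rangle = \langle A, P^\ast\rangle + \langle A'-A, P^\ast\rangle = W_A(\uX_i,\uX_j) + \langle A'-A, P^\ast\rangle$, and symmetrically. Since $P^\ast \ge 0$ with $P^\ast \1_m = \uX_i$, $(P^\ast)^\tT\1_m = \uX_j$ and $\uX_i^\tT \1_m = 1$, the total mass is $\1_m^\tT P^\ast \1_m = 1$, so $|\langle A'-A, P^\ast\rangle| \le \|A'-A\|_\infty \sum_{k,l} P^\ast_{k,l} = \|A'-A\|_\infty$. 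Thus $|W_{A'}(\uX_i,\uX_j) - W_A(\uX_i,\uX_j)| \le \|A-A'\|_\infty$ for every entry, and since the added matrix $R_n$ is independent of $A$, taking the max over $i,j$ gives $\|\q F(A) - \q F(A')\|_\infty \le \|A - A'\|_\infty$.

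\textbf{Property 2 (maps into $\overline{\bb B}$, resp.\ $\bb B$).} Write $B \coloneqq \q F(A) = W_A(\uX) + R_n$. Nonnegativity and symmetry of $W_A(\uX)$ follow from $A \ge 0$ and $A = A^\tT$ (transposing a transport plan), and both are inherited under addition with $R_n \in \overline{\bb B}$. The diagonal vanishes: $W_A(\uX_i,\uX_i) = 0$ using the plan $P = \diag(\uX_i)$ together with $A_{k,k}=0$, and $(R_n)_{i,i}=0$. The triangle inequality for $W_A$ is the standard gluing-of-plans argument (compose an optimal plan from $\uX_i$ to $\uX_\ell$ with one from $\uX_\ell$ to $\uX_j$ via a common reference, using the triangle inequality of $A$); it is preserved under addition of $R_n$, which also satisfies it. Finally, for $i \neq j$, since $\uX_i \neq \uX_j$ (rows of $\uX$ are distinct after the normalization and the no-repeat assumption on $X$) and $A_{k,\ell}>0$ off-diagonal when $A \in \bb A$, any transport plan must place positive mass off the diagonal, giving $W_A(\uX_i,\uX_j) > 0$; hence $B_{i,j} \ge (R_n)_{i,j} \ge 0$ with the stated strictness, and when additionally $R_n \in \bb B$ we get $B_{i,j} > 0$ off-diagonal, so $B \in \bb B$.

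\textbf{Property 3 (lower bound on the norm).} This is immediate from Property 2: for any $i \neq j$, $\|\q F(A)\|_\infty \ge |B_{i,j}| = W_A(\uX_i,\uX_j) + (R_n)_{i,j} \ge (R_n)_{i,j}$, and maximizing over $i,j$ gives $\|\q F(A)\|_\infty \ge \|R_n\|_\infty$ (the diagonal of $R_n$ being zero, the max is attained off-diagonal).

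\textbf{Main obstacle.} The only genuinely non-routine point is the triangle inequality for $W_A(\uX)$ when $A$ is merely a pseudo-distance matrix: one must argue carefully that the gluing construction produces an admissible plan with the right marginals and that the cost bound $\langle A, P_{\text{glued}}\rangle \le \langle A, P_1\rangle + \langle A, P_2\rangle$ survives, which is where $A_{k,s} \le A_{k,\ell}+A_{\ell,s}$ enters. Everything else (1-Lipschitz via LP duality/complementary-mass counting, symmetry, vanishing diagonal, positivity off-diagonal) is a short verification from the definitions.
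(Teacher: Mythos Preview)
Your proof is correct and follows essentially the same approach as the paper. For Property~1 you use exactly the same linear-programming argument (optimal plan has unit mass, Hölder with $\|\cdot\|_\infty/\|\cdot\|_1$); for Property~3 you give the same one-line bound. The only difference is in Property~2: the paper simply invokes that $W_A$ is a pseudometric on the simplex for $A \in \overline{\bb A}$ and immediately deduces $B_{i,j} \ge (R_n)_{i,j}$, whereas you unpack this (symmetry via transposed plans, zero diagonal via $P=\diag(\uX_i)$, triangle inequality via gluing), which is more informative but not required. Your aside about $W_A(\uX_i,\uX_j)>0$ when $A \in \bb A$ is unnecessary here since the domain is $\overline{\bb A}$ and the strict positivity of $B_{i,j}$ comes solely from $R_n \in \bb B$.
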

    
    \begin{proof}
        1. For  $A,A' \in \overline{\mathbb A}$, we have
        \begin{align}
            \| \q F(A) - \q F(A) \|_\infty &= \| W_A(\uX) - W_{A'}(\uX) \|_\infty \\
            &= \max_{i,j\in [n]} \big| \min_{P \in \Pi(\uX_i,\uX_j)} \langle A,P \rangle - \min_{P' \in \Pi(\uX_i,\uX_j)} \langle A',P' \rangle \big|.
         \end{align}
         Let $P^*_{i,j}$ be the minimizer of the smaller minimum for each $i,j \in [n]$. Then we obtain
         \begin{align}
            \| \q F(A) - \q F(A) \|_\infty 
            &\le \max_{i,j\in [n]} \langle A,P^*_{i,j} \rangle - \langle A',P^*_{i,j} \rangle            
            \le
            \|A - A'\|_\infty  \max_{i,j\in [n]} \|P^*_{i,j}\|_1 = \|A - A'\|_\infty.    
        \end{align}                       
        2. For $A \in \overline{\mathbb A}$, we know that $W_A$ 
        is a pseudometric and since $R_n \in \overline{\mathbb B}$, we get
        \[
            \q F_{i,j}(A) 
            = W_A(\uX_i, \uX_j) + R_n(\uX_i, \uX_j) 
            \ge R_n(\uX_i, \uX_j) 
            \ge 0
        \]
        with strict last inequality if $R_n \in \mathbb B$.
        
        3. By definition of $\q F$,
        we obtain
        \[
            \| \q F(A) \|_\infty 
            = \max_{i,j \in [n]} \big( W_A(\uX_i, \uX_j) + R_n(\uX_i, \uX_j) \big)
            \ge 
            \max_{i,j \in [n]} R_n(\uX_i, \uX_j) 
            = \| R_n \|_\infty.
        \]   
    \end{proof}
Using the above properties, the following convergence guarantees 
follow directly from  the Theorems  \ref{l:lipshcitz}, \ref{thm: convergence grad}
and \ref{thm: convergence stoch}.
    
    \begin{theorem}  \label{cor: wasserstein}
    Let $\q F$ and $\q G$ be defined by \eqref{eq:kernelregularized_w}.
    Then, starting with $A^0 \in \overline{\bb A}$, $B^0 \in \overline{\bb B}$, the following holds true:
        \begin{enumerate}
            \item if $\|R_n\|_\infty, \|R_m\|_\infty > 2$, then
            the sequence $(A^t,B^t)_t$ generated by Algorithm \ref{alg:fix2}
            converges to the unique fixed point  $(A, B) \in \overline{\bb A} \times \overline{\bb B}$ of \eqref{fix2}.
            \item if $0 < \gamma_\q F, \gamma_\q G$ and $\gamma_\q F \gamma_\q G < 1$, 
            then the sequence $(A^t,B^t)_t$ generated by Algorithm \ref{alg:fix}
            converges to a unique fixed point
            $(A, B) \in \overline{\bb A} \times \overline{\bb B}$ 
            of \eqref{fix1};
            \item if $0 < \gamma_\q F \le \sqrt \alpha/ \sqrt 2 m$, $0< \gamma_\q G \le 1/n$, $n, m \ge 2$, then the sequence $(A^t,B^t)_t$ generated by Algorithm \ref{alg: rfi} 
            converges almost surely
            to the unique fixed point $(A, B) \in \overline{\bb A} \times \overline{\bb B}$ of \eqref{fix2}.
                   \end{enumerate} 
        If additionally $R_m \in \bb A$, $R_n \in \bb 
 B$,
        then $(A,B) \in \bb A \times \bb B$.
    \end{theorem}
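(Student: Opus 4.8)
The plan is to feed the structural facts of Lemma~\ref{prop: wasserstein properties} into the three abstract convergence results, Theorem~\ref{l:lipshcitz}, Theorem~\ref{thm: convergence grad} and Theorem~\ref{thm: convergence stoch}. By Lemma~\ref{prop: wasserstein properties}(1) we may take $L_\q F = L_\q G = 1$, and by Lemma~\ref{prop: wasserstein properties}(3) we may take $C_\q F = \|R_n\|_\infty$ and $C_\q G = \|R_m\|_\infty$. With these values each hypothesis of the abstract theorems collapses to a condition on the $R$'s and $\gamma$'s alone: for Algorithm~\ref{alg:fix2} one computes $L_{\tilde{\q T}} = L_{\tilde{\q F}} L_{\tilde{\q G}} = 4/(C_\q F C_\q G)$, so $L_{\tilde{\q T}} < \tfrac14 C_\q F C_\q G$ is equivalent to $\|R_n\|_\infty \|R_m\|_\infty > 4$, which holds whenever $\|R_n\|_\infty, \|R_m\|_\infty > 2$; for Algorithm~\ref{alg:fix} the requirement $L_\q F L_\q G < 1/(\gamma_\q F \gamma_\q G)$ is exactly $\gamma_\q F \gamma_\q G < 1$; for Algorithm~\ref{alg: rfi} the requirements $L_\q F \le \sqrt{\alpha}/(\sqrt 2\, m\, \gamma_\q F)$ and $L_\q G \le 1/(n \gamma_\q G)$ together with $n,m>1$ are exactly the stated bounds on $\gamma_\q F,\gamma_\q G$ and $n,m \ge 2$.

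The only genuine subtlety is that the abstract theorems are phrased for operators on the full matrix spaces, whereas $W_A$ --- hence $\q F$ --- is naturally defined on $\overline{\bb A}$, resp.\ $\overline{\bb B}$. For the first two algorithms I would run the fixed-point theorems directly on the complete metric spaces $\overline{\bb A}$ and $\overline{\bb B}$. These sets are cones and are convex, since nonnegativity, symmetry, vanishing diagonal and the triangle inequalities are all preserved under nonnegative scaling and convex combinations. Hence Lemma~\ref{prop: wasserstein properties}(2) shows that $\tilde{\q F}$ and $\gamma_\q F \q F$ map $\overline{\bb A}$ into $\overline{\bb B}$, that $\tilde{\q G}$ and $\gamma_\q G \q G$ map $\overline{\bb B}$ into $\overline{\bb A}$, and --- using convexity for the damped step $A^{t+1} = (1-\alpha)A^t + \alpha \gamma_\q G \q G(B^{t+1})$ --- that all iterates of Algorithms~\ref{alg:fix2} and~\ref{alg:fix} starting from $A^0 \in \overline{\bb A}$ stay in $\overline{\bb A}$ (here $\|\q F(A^t)\|_\infty \ge \|R_n\|_\infty > 0$ makes the normalization legitimate). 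Thus $\tilde{\q T}$, resp.\ $\q T$, is a contractive self-map of a complete metric subspace, so the conclusions of Theorem~\ref{l:lipshcitz}, resp.\ Theorem~\ref{thm: convergence grad}, apply verbatim; the fixed point $A$ lies in $\overline{\bb A}$ and its partner $B = \tilde{\q F}(A)$, resp.\ $B = \gamma_\q F \q F(A)$, lies in $\overline{\bb B}$ by the same reasoning.

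For Algorithm~\ref{alg: rfi} this device is unavailable, because the single-entry updates $\q R^{(i,j)}, \q S^{(k,\ell)}$ need not preserve symmetry or the triangle inequalities, so the iterates leave $\overline{\bb A} \times \overline{\bb B}$. Here I would extend $\q F$ and $\q G$ to all of $\R^{m\times m}$, resp.\ $\R^{n\times n}$, by the same formulas: $W_A(x,y) = \min_{P \in \Pi(x,y)} \langle A,P\rangle$ makes sense for every $A$, and the argument in the proof of Lemma~\ref{prop: wasserstein properties}(1) shows it stays $1$-Lipschitz in $A$, so $L_\q F = L_\q G = 1$ is retained. Theorem~\ref{thm: convergence stoch} then gives almost-sure convergence of $(A^t,B^t)$ to the unique fixed point $(A,B)$ of $(\gamma_\q G \q G, \gamma_\q F \q F)$. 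To locate this point, note that the stated bounds imply $\gamma_\q F \gamma_\q G < 1$, so $\q T = (\gamma_\q G \q G) \circ (\gamma_\q F \q F)$ is globally contractive; by the cone/convexity argument above together with Lemma~\ref{prop: wasserstein properties}(2), $\q T$ also restricts to a contractive self-map of the complete space $\overline{\bb A}$, whose fixed point therefore coincides with the unique global one, so $A \in \overline{\bb A}$ and then $B = \gamma_\q F \q F(A) \in \overline{\bb B}$. Finally, for the last assertion: if additionally $R_n \in \bb B$ and $R_m \in \bb A$, then Lemma~\ref{prop: wasserstein properties}(2) yields strict off-diagonal positivity, i.e.\ $\q F(A) \in \bb B$ and $\q G(B) \in \bb A$; since $\bb A$ and $\bb B$ are cones and $A$, resp.\ $B$, is a positive multiple of $\q G(B)$, resp.\ $\q F(A)$, we get $(A,B) \in \bb A \times \bb B$.

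I expect the bookkeeping of domains to be the main point requiring care: legitimately invoking the abstract fixed-point results --- which live on the linear spaces $\R^{m\times m}$, $\R^{n\times n}$ --- on the non-linear admissible sets $\overline{\bb A}, \overline{\bb B}$, and, for the stochastic iteration, recovering membership of the limit in these sets a posteriori rather than along the iterates. Once $L_\q F = L_\q G = 1$, $C_\q F = \|R_n\|_\infty$ and $C_\q G = \|R_m\|_\infty$ are substituted, verifying the numerical conditions is routine.
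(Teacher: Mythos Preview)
Your proposal is correct and follows the same approach as the paper, which simply states that the result follows directly from Theorems~\ref{l:lipshcitz}, \ref{thm: convergence grad} and~\ref{thm: convergence stoch} together with Lemma~\ref{prop: wasserstein properties}. You actually supply more detail than the paper: in particular, your discussion of why the iterates remain in $\overline{\bb A}\times\overline{\bb B}$ for Algorithms~\ref{alg:fix2} and~\ref{alg:fix}, and your a posteriori argument locating the limit of Algorithm~\ref{alg: rfi} in $\overline{\bb A}\times\overline{\bb B}$ via uniqueness of the global fixed point, address a domain issue that the paper leaves implicit.
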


    \begin{remark}\label{peyre_1}
    For OT distances, Algorithm \ref{alg: rfi} resembles the stochastic power iteration algorithm  
    introduced in \cite{huizing2022unsupervised} for problem \eqref{fix2}. 
    In contrast to those algorithms, we do not require normalization of $A^t$ after each iteration,
    nor scaling factors $\tilde \mu, \tilde \nu$ 
    approximating $\|\q F(A)\|_\infty$ and $\|\q G(B)\|_\infty$ of the fixed point $(A,B)$. 
    Furthermore, 
    our convergence guarantees hold for constant step sizes, 
    while for the stochastic power iterations, the step sizes vanish over time. 

    Further, we were not able to follow few steps in the convergence proof of the stochastic power iteration algorithm in \cite[Appendix E]{huizing2022unsupervised}:  we do not understand how the projection theorem is applied for the operation $A/\|A\|_\infty$ which is not a projection onto the $\|\cdot\|_\infty$-ball and how Lipschitz continuity of $W_B$ in the $\|\cdot\|_2$-norm can be used without affecting the constants, while only Lipschitz continuity in the $\|\cdot\|_\infty$-norm was established for the claim.
     \end{remark}
   
\subsection{Sinkhorn Divergences} \label{sec:sinkhorn}    
Since the evaluation of Wasserstein distances $W_A(X_i, X_j)$, $i,j \in [n]$, is computationally heavy for large $m,n$, the Wasserstein distance is usually replaced by the
Sinkhorn divergence \cite{feydy2019interpolating} which does not satisfy
a triangular inequality.
Therefore, we may relax the assumptions on $A$ and $B$
to semi-distance matrices
\[
        \bb{SD}_m 
        \coloneqq 
        \{ A \in \bb R^{m \times m}_{\ge 0} : A = A^\tT, A_{k,k} = 0, A_{k,\ell} > 0, 
        k \neq \ell,
        k,\ell \in [m]\},
    \]
    and their closure 
    \[
        \overline{\bb{SD}}_m
        \coloneqq 
        \{ A \in \bb R^{m \times m}_{\ge 0} : 
        A = A^\tT, 
        A_{k,k} = 0, 
        k \in [m]\}
    \]
    and consider in this section
$$\bb A \coloneqq \bb{SD}_m \quad \text{and} \quad 
        \bb B \coloneqq \bb{SD}_n.$$
For $A \in \overline{\mathbb A}$ and $x,y \in \R^m_{\ge 0}$ with $x^\tT \1_m = y^\tT \1_m = 1$ and $\varepsilon > 0$, let
    \begin{align}\label{eq: entropic ot}
    W_A^\varepsilon (x, y) 
    &\coloneqq 
    \min_{P \in \Pi(x,y)} \left\{ \langle A, P \rangle + \varepsilon \|A\|_\infty \KL( P, x y^\tT) \right\} \\
    &=
    \varepsilon \|A\|_\infty \min_{P \in \Pi(x,y)} \KL \left( P, x y^\tT \circ \exp\big(-\tfrac{1}{\varepsilon \|A\|_\infty} A \big) \right)
    \quad \text{for} \quad \|A\|_\infty > 0,
    \end{align}
    where $\KL$ is the \emph{Kullback-Leibler divergence}
    defined for $P,Q \in \R^{m \times m}_{\ge 0}$ with 
    by $\sum_{k,\ell = 1}^m P_{k,\ell} =\sum_{k,\ell = 1}^m Q_{k,\ell} = 1$ by
    \[
        \KL(P, Q) 
        \coloneqq
        \sum_{k,\ell=1}^{m} P_{k,\ell} \log\Big(\frac{P_{k,\ell}}{Q_{k,\ell}} \Big) \ge 0
    \]
    with the convention that $0 \log 0 \coloneqq 0$
    and $\KL(P, Q) = +\infty$ if $Q_{k,\ell} = 0$, but $P_{k,\ell} \not = 0$ for some $k,\ell \in [m]$.
    As $\KL$ is strictly convex, the minimizer exists, is unique and can be efficiently computed using Sinkhorn's algorithm \cite{cuturi2013sinkhorn}.
    Now, the \emph{Sinkhorn divergence} is defined by
    \[
        S^\varepsilon_A(x, y) 
        \coloneqq W^\varepsilon_A(x, y) 
        - \tfrac{1}{2}\big(W^\varepsilon_A(x, x) 
        + W^\varepsilon_A(y, y) \big), 
    \]
    As the KL divergence, the Sinkhorn divergence 
    admits $S^\varepsilon_A(x, y) \ge 0$
    with equality if $x = y$. The reverse implication is also true for $A \in \bb D_m$ \cite{feydy2019interpolating}.
    Note that we included $\|A\|_\infty$ as multiplier in \eqref{eq: entropic ot} 
    to make both $W^\varepsilon_A$ 
    and $S^\varepsilon_A$ positively homogeneous 
    with respect to $A$. 
   Instead of \eqref{eq:kernelregularized_w}, we deal with the  mappings 
   $\mathcal F: \overline{\mathbb A}  \to \R^{n \times n}$ 
and
$\mathcal G:\overline{\mathbb B}  \to \R^{m \times m}$ of the form
       \begin{equation}      \label{eq: sinkhorn regularized}
            \q F(A) 
             \coloneqq S^\varepsilon_A(\uX) +
             R_n
             \quad \text{and} \quad 
            \q G(B) 
             \coloneqq S^\varepsilon_B(\overline X)  +  R_m,
       \end{equation} 
    where where $R_m \in \overline{\mathbb A}$ and $R_n \in  \overline{\mathbb B}$ are non-zero matrices.

    The properties of these mappings are summarized by the next lemma.     
       
    \begin{lemma}    \label{prop: sinkhorn properties}
     The mapping $\q F$ defined in \eqref{eq: sinkhorn regularized} has the following properties:
        \begin{enumerate}
            \item $\q F$ is Lipschitz continuous on $\overline{\bb A}$ with constant $L_{\q F} \coloneqq 2(1 + \varepsilon C)$, where
            \[
            C \coloneqq  2 m \max_{i \in [n]} \sum_{\substack{k =1\\ X_{i,k} > 0}} ^m - \log (X_{i,k} ).
            \]
            \item $\q F$ maps 
            $\overline{\bb A}$ 
            to $\{ B \in \overline{\bb B} : B_{i,j} \ge (R_n)_{i,j} \ge 0, i \neq j \}$,
            with strict last inequality and $B \in \mathbb B$ if $R_n \in \mathbb B$.
            \item $\q F$  admits $\| \q F(A) \|_\infty \ge \| R_n \|_\infty$ for all $A \in \overline{\bb A}$.
        \end{enumerate}
    \end{lemma}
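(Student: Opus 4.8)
The plan is to mirror the proof of Lemma~\ref{prop: wasserstein properties}: properties~2 and~3 are essentially bookkeeping, and the only genuinely new input is the quantitative Lipschitz constant in property~1, where the entropic regularization enters through the factor $\varepsilon\|A\|_\infty$ multiplying the $\KL$ term. For properties~2 and~3 I would use that $S^\varepsilon_A(x,y)$ is symmetric in $x,y$ (because $A=A^\tT$ forces $W^\varepsilon_A(x,y)=W^\varepsilon_A(y,x)$), that $S^\varepsilon_A(x,x)=W^\varepsilon_A(x,x)-W^\varepsilon_A(x,x)=0$, and that $S^\varepsilon_A\ge0$ by \cite{feydy2019interpolating}. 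Then $\q F(A)=\q F(A)^\tT$, $\q F_{i,i}(A)=(R_n)_{i,i}=0$, and for $i\ne j$ one has $\q F_{i,j}(A)=S^\varepsilon_A(\uX_i,\uX_j)+(R_n)_{i,j}\ge(R_n)_{i,j}\ge0$, with strict last inequality when $R_n\in\bb B$; this gives property~2 together with $\q F(A)\in\bb B$ in the case $R_n\in\bb B$, while taking the maximum over $(i,j)$ and using $S^\varepsilon_A,R_n\ge0$ gives $\|\q F(A)\|_\infty\ge\|R_n\|_\infty$, which is property~3 — exactly as in Lemma~\ref{prop: wasserstein properties}.

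For property~1 I would first reduce the Lipschitz estimate for $\q F$ to one for the entropic transport cost at a fixed pair of marginals. Writing $S^\varepsilon_A(x,y)=W^\varepsilon_A(x,y)-\tfrac12 W^\varepsilon_A(x,x)-\tfrac12 W^\varepsilon_A(y,y)$ and using the triangle inequality,
\begin{align*}
|S^\varepsilon_A(x,y)-S^\varepsilon_{A'}(x,y)|
&\le |W^\varepsilon_A(x,y)-W^\varepsilon_{A'}(x,y)|\\
&\quad+\tfrac12|W^\varepsilon_A(x,x)-W^\varepsilon_{A'}(x,x)|+\tfrac12|W^\varepsilon_A(y,y)-W^\varepsilon_{A'}(y,y)|
\end{align*}
for all $A,A'\in\overline{\bb A}$ and all $x=\uX_i$, $y=\uX_j$, it suffices to show that $|W^\varepsilon_A(u,v)-W^\varepsilon_{A'}(u,v)|\le(1+\varepsilon C)\|A-A'\|_\infty$ for all $u,v\in\{\uX_1,\dots,\uX_n\}$; the coefficients $1+\tfrac12+\tfrac12=2$ then produce the constant $2(1+\varepsilon C)$ after taking the maximum over $i,j$ in $\|\q F(A)-\q F(A')\|_\infty=\|S^\varepsilon_A(\uX)-S^\varepsilon_{A'}(\uX)\|_\infty$.

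For the entropic cost I would use the standard device of evaluating the larger of the two problems at the optimal plan of the smaller one. Assume $W^\varepsilon_A(u,v)\ge W^\varepsilon_{A'}(u,v)$ (otherwise swap $A$ and $A'$). If $\|A'\|_\infty=0$ then $W^\varepsilon_{A'}\equiv0$ and testing the $A$-problem with the product plan $uv^\tT$ already gives $W^\varepsilon_A(u,v)\le\langle A,uv^\tT\rangle\le\|A\|_\infty=\|A-A'\|_\infty$; otherwise let $P^*\in\Pi(u,v)$ be the (unique) minimizer defining $W^\varepsilon_{A'}(u,v)$. Combining sub-optimality of $P^*$ for the $A$-problem with $\|P^*\|_1=1$ and $|\langle A-A',P^*\rangle|\le\|A-A'\|_\infty\|P^*\|_1=\|A-A'\|_\infty$, the reverse triangle inequality $\big|\,\|A\|_\infty-\|A'\|_\infty\,\big|\le\|A-A'\|_\infty$, and $\KL(P^*,uv^\tT)\ge0$, I obtain
\begin{align*}
0\le W^\varepsilon_A(u,v)-W^\varepsilon_{A'}(u,v)
&\le\langle A-A',P^*\rangle+\varepsilon\big(\|A\|_\infty-\|A'\|_\infty\big)\KL(P^*,uv^\tT)\\
&\le\big(1+\varepsilon\,\KL(P^*,uv^\tT)\big)\|A-A'\|_\infty.
\end{align*}

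It then remains to bound $\KL(P^*,uv^\tT)\le C$ uniformly in $A'$, and this is the step I expect to be the main obstacle. Since $P^*\in\Pi(u,v)$ we have $0\le P^*_{k,\ell}\le\min\{u_k,v_\ell\}\le1$; in particular $P^*_{k,\ell}>0$ forces $u_k,v_\ell>0$, and on those indices $\log\frac{P^*_{k,\ell}}{u_kv_\ell}\le-\log u_k-\log v_\ell$, which is nonnegative because $u_k,v_\ell\le1$. Discarding the vanishing terms, bounding $P^*_{k,\ell}\le1$, and using $|\supp u|,|\supp v|\le m$,
\begin{align*}
\KL(P^*,uv^\tT)
&\le\sum_{k : u_k>0}\ \sum_{\ell : v_\ell>0}\big(-\log u_k-\log v_\ell\big)\\
&\le m\sum_{k : u_k>0}\big(-\log u_k\big)+m\sum_{\ell : v_\ell>0}\big(-\log v_\ell\big)
\le 2m\max_{i\in[n]}\sum_{k : X_{i,k}>0}\big(-\log X_{i,k}\big)=C,
\end{align*}
and this applies both to the off-diagonal pairs $(u,v)=(\uX_i,\uX_j)$ and to the diagonal ones $(u,v)=(\uX_i,\uX_i)$ occurring in $S^\varepsilon_A$. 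Substituting back gives $|W^\varepsilon_A(u,v)-W^\varepsilon_{A'}(u,v)|\le(1+\varepsilon C)\|A-A'\|_\infty$, hence $\|\q F(A)-\q F(A')\|_\infty\le 2(1+\varepsilon C)\|A-A'\|_\infty$, i.e.\ $L_\q F=2(1+\varepsilon C)$. The only delicate point is really this final estimate: one needs an $A$-independent control on the relative entropy of the optimal plan against the independent coupling $uv^\tT$, and the crude support-counting bound above is what converts it into the stated data-dependent constant (a sharper argument would give the entropies $H(u)+H(v)$, but $C$ is the simplest clean choice, and it also covers the degenerate case $\|A\|_\infty=0$ for free).
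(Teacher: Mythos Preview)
Your proposal is correct and follows essentially the same route as the paper: both reduce to showing $|W^\varepsilon_A(u,v)-W^\varepsilon_{A'}(u,v)|\le(1+\varepsilon C)\|A-A'\|_\infty$ by evaluating the larger problem at the optimal plan $P^*$ of the smaller one and then bounding $\KL(P^*,uv^\tT)\le C$ via the crude support-counting estimate, after which the factor $2$ comes from the three $W^\varepsilon$-terms in $S^\varepsilon$. Your explicit treatment of the degenerate case $\|A'\|_\infty=0$ and your remark that the sharper bound $H(u)+H(v)$ is available are small additions beyond the paper, but the argument is otherwise the same (the paper phrases the termwise $\KL$ bound via $t\log(ct)\le\log c$, which is equivalent to your $\log P^*_{k,\ell}\le 0$ step).
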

    
    \begin{proof}
        1. Let $i,j \in [n]$ be arbitrary fixed. Without loss of generality, assume that $W_A^{\varepsilon}(\uX_i, \uX_j) \ge W_{A'}^{\varepsilon}(\uX_i, \uX_j)$.
        Let $P, P'$
        be optimal plans in \eqref{eq: entropic ot}
        corresponding to $A$ and $A'$.
        Then we get
        \begin{align*}
            W_A^{\varepsilon}(\uX_i, \uX_j) 
            &= \langle A, P \rangle 
            + \varepsilon \|A\|_\infty \KL( P, \uX_i \uX_j^\tT)
            \le \langle A, P' \rangle
            + \varepsilon \|A\|_\infty \KL( P', \uX_i \uX_j^\tT).
        \end{align*}        
        and further
        \begin{align*}
            W_A^\varepsilon(\uX_i, \uX_j) - W_{A'}^\varepsilon(\uX_i, \uX_j)
            &\le \langle A, P' \rangle - \langle A', P' \rangle + \varepsilon ( \|A\|_\infty - \|A'\|_\infty) \KL( P', \uX_i \uX_j^\tT)\\
            & \le 
            \langle A - A' , P' \rangle + \varepsilon ( \|A  - A'\|_\infty) \KL( P', \uX_i \uX_j^\tT) \\
            & \le \big( \| P'\|_{1} + \varepsilon \KL( P', \uX_i \uX_j^\tT) \big) \| A - A' \|_\infty  \\
            & = \big( 1 + \varepsilon \KL( P', \uX_i \uX_j^\tT) \big) \| A - A' \|_\infty .
        \end{align*}
         Since we know by definition of $\KL$ that $P_{k,\ell}' =0$ whenever $\uX_{i,k} \uX_{j,\ell} = 0$ and we have 
        \[
            \KL(P', \uX_i \uX_j^\tT) = \smashoperator{\sum_{
            \substack{k,\ell=1\\
            \uX_{i,k} \uX_{j,\ell} >0}}}^{m} P_{k,\ell}' \log \left( \frac{P_{k,\ell}'}{\uX_{i,k} \uX_{j,\ell}} \right) .
        \]
        Let us consider the function $t \log(ct)$ for $t \in [0,1]$ with $c\ge1$. Its maximum is obtained at $t = 1$ with value $\log(c)$. Setting $t = P_{k,\ell}'$ and $c^{-1} = \uX_{i,k} \uX_{j,\ell}$ we obtain
        \begin{align*}
            \KL(P', \uX_i \uX_j^\tT) 
            & \le - \smashoperator{\sum_{\substack{k,\ell=1\\
            \uX_{i,k} \uX_{j,\ell} >0}}}^{m} \log \left( \uX_{i,k} \uX_{j,\ell} \right) 
              \le - m \smashoperator{\sum_{\substack{k=1\\
             \uX_{i,k} > 0}}}^{m} \log ( \uX_{i,k} ) 
            - m \smashoperator{\sum_{\substack{\ell=1\\\uX_{j,\ell} >0}}}^{m} \log( \uX_{j,\ell} )  \\
            & \le 2m \max_{i \in [n]} \smashoperator{\sum_{\substack{k=1\\
             \uX_{i,k} > 0}}}^{m} -\log ( \uX_{i,k} )  = C.
        \end{align*}
        Hence we conclude
        \[
        | W_A^\varepsilon(\uX_i,\uX_j) - W_{A'}^\varepsilon(\uX_i,\uX_j)|
        \le \left( 1 + \varepsilon C \right) \| A - A' \|_\infty,
        \]
        so that $W_A^\varepsilon(X_i,X_j)$ 
        is $(1 + \varepsilon C)$-Lipschitz in $A$. 
        Then, by definition,
        the Sinkhorn divergence $S^\varepsilon_A$ 
        is $(2 + 2 \varepsilon C)$-Lipschitz.
        Consequently, 
        $\q F$ is $(2 + 2 \varepsilon C)$-Lipschitz continuous
        in $\| \cdot \|_\infty$.  
        
        The rest of the proof follows the lines of the proof of 
        Lemma~\ref{prop: wasserstein properties}.
    \end{proof}

    Notably, there is a mismatch factor 2 between the 
    Lipschitz constants for the Wasserstein ($\varepsilon=0$) 
    and Sinkhorn ($\varepsilon>0$) case.
    We believe that a better Lipschitz constant $1+\varepsilon \tilde C$ could be achieved, but we could not prove this so far. 
    
   Using the above lemma,  Theorem \ref {cor: wasserstein} can be similarly deduced for the Sinkhorn case,
   so that we also have convergence of Algorithms 1-3 under some conditions.
   In particular,  
Theorem \ref {cor: wasserstein} requires lower bounds on $R_m$ and $R_n$ for the convergence of the sequence generated by Algorithm \ref{alg:fix2}.
The following theorem shows the existence of a
   fixed point of $\tilde {\q T} = \tilde {\q G} \circ \tilde {\q F}$ in \eqref{fix2} for \emph{arbitrary} semi-metric matrices $R_n$ and $R_m$.  For reasons outlined in  Remark \ref{rem:peyre_2}, we also include the proof of the theorem.

       \begin{theorem}\label{thm: existence of wasserstein singular}
      Let $\q F$ and $\q G$ be given by \eqref{eq: sinkhorn regularized}
      with $R_m \in \mathbb A$ and $R_n \in \mathbb B$.
        Then, 
        there exists a fixed point of $\tilde {\q T} = \tilde {\q G} \circ \tilde {\q F}$. The assertion also holds true
        for the Wasserstein setting, i.e. $\q F$ and $\q G$ in \eqref{eq:kernelregularized_w} and metric matrices $R_m,R_n$.
    \end{theorem}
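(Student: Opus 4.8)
\emph{Proof strategy.} The plan is to deduce existence from Brouwer's fixed point theorem: I would exhibit a compact convex set that $\tilde{\q T} = \tilde{\q G}\circ\tilde{\q F}$ maps continuously into itself. I describe the argument in the Sinkhorn setting; for the Wasserstein case one simply replaces Lemma~\ref{prop: sinkhorn properties} by Lemma~\ref{prop: wasserstein properties} throughout, and nothing else changes.

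The candidate set is
\[
    K_m \coloneqq \{ A \in \overline{\bb{SD}}_m : \| A \|_\infty \le 1 \},
    \qquad
    K_n \coloneqq \{ B \in \overline{\bb{SD}}_n : \| B \|_\infty \le 1 \}.
\]
Since $\overline{\bb{SD}}_m$ is a convex cone — symmetric, entrywise nonnegative matrices with vanishing diagonal are closed under addition and nonnegative scaling — and the $\ell_\infty$-ball is convex, $K_m$ is convex; being closed and bounded it is compact, hence an admissible arena for Brouwer's theorem (it is a compact convex subset of the finite-dimensional space of symmetric matrices with zero diagonal).

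Next I would check that $\tilde{\q T}$ is a well-defined continuous self-map of $K_m$. For $A \in K_m \subseteq \overline{\bb A}$, part~3 of Lemma~\ref{prop: sinkhorn properties} gives $\| \q F(A) \|_\infty \ge \| R_n \|_\infty$, and since $R_n \in \mathbb B$ forces $R_n \ne 0$ this is strictly positive, so $\tilde{\q F}(A) = \q F(A)/\| \q F(A) \|_\infty$ is defined; by part~2 it lies in $\overline{\bb{SD}}_n$ and has $\ell_\infty$-norm exactly $1$, hence $\tilde{\q F}(A) \in K_n$. Applying the $\q G$-analogues of parts~2 and 3 to $B \in K_n$ yields $\| \q G(B) \|_\infty \ge \| R_m \|_\infty > 0$ and $\tilde{\q G}(B) \in K_m$, so $\tilde{\q T}(K_m) \subseteq K_m$. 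For continuity, parts~1 give that $\q F$ and $\q G$ are Lipschitz, hence continuous, while $M \mapsto M/\| M \|_\infty$ is continuous on $\{ \| M \|_\infty \ge c \}$ for any $c > 0$; by the lower bounds just used, the normalizations inside $\tilde{\q F}$ and $\tilde{\q G}$ are only ever evaluated on such sets (with $c = \| R_n \|_\infty$, resp.\ $c = \| R_m \|_\infty$), so $\tilde{\q F}$, $\tilde{\q G}$ and their composition $\tilde{\q T}$ are continuous on $K_m$. Brouwer's fixed point theorem then produces an $A \in K_m$ with $\tilde{\q T}(A) = A$.

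The one genuinely delicate point — and the reason the hypotheses insist that $R_m, R_n$ be non-zero semi-metric matrices — is that the normalization step must be simultaneously defined and continuous on all of $K_m$, which hinges precisely on the uniform lower bounds $\| \q F(A) \|_\infty \ge \| R_n \|_\infty > 0$ and $\| \q G(B) \|_\infty \ge \| R_m \|_\infty > 0$ supplied by Lemma~\ref{prop: sinkhorn properties}. I do not expect to obtain uniqueness or membership of the fixed point in $\bb{SD}_m$ rather than its closure: in this regime $\tilde{\q T}$ is not a contraction, so Brouwer is essentially the only available tool, which is why the statement is phrased purely as an existence result; see Remark~\ref{rem:peyre_2}.
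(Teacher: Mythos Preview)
Your proof is correct and follows essentially the same Brouwer-based strategy as the paper: exhibit a compact convex subset of $\overline{\bb{SD}}_m$ that $\tilde{\q T}$ maps continuously into itself, using the lower bound $\|\q F(A)\|_\infty \ge \|R_n\|_\infty > 0$ (and its $\q G$-analogue) to make the normalization well-defined and continuous. Your set $K_m = \{A \in \overline{\bb{SD}}_m : \|A\|_\infty \le 1\}$ is in fact simpler than the paper's $\bb K_c$, which additionally carries entrywise lower bounds on the off-diagonal derived from an upper estimate $\|\q G(B)\|_\infty \le 2 + 2\varepsilon C + \|R_m\|_\infty$; that extra work is not needed for bare existence. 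One small correction to your closing remark: membership of the fixed point in $\bb{SD}_m$ \emph{does} follow for free, since $A = \tilde{\q T}(A)$ lies in the range of $\tilde{\q G}$, and part~2 of Lemma~\ref{prop: sinkhorn properties} applied to $\q G$ with $R_m \in \bb A$ forces $\tilde{\q G}(B) \in \bb A$ for every $B \in \overline{\bb B}$.
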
   

    \begin{proof}
    By Lemma \ref{prop: wasserstein properties}, we have
    for any $B \in \overline{\bb B}$ 
    that
    \[
        \q G(\overline{\bb B}) 
        \subset 
        \{ A \in \bb A: A_{k,\ell} \ge (R_m)_{k,\ell} > 0, k \neq \ell \}
    \]
    and $\|\q G(B)\|_\infty \ge \| R_m \|_\infty  > 0$. 
    Therefore,
    $\bb A$ being a cone implies $\q G(B) / \|\q G(B)\|_\infty \in \bb A$. 
    Next,
    we show that the entries of $\q G(B) / \|\q G(B)\|_\infty$ are bounded away from zero. 
    For this, we have by definition of $\tilde{\q F}$
    only to deal with $\|B\|_\infty =1$.
    Then the Lipschitz continuity of $\q G$ in Lemma \ref{prop: sinkhorn properties} yields
    \begin{align}
        \| \q G(B) \|_\infty 
        & \le \| \q G(B)  - \q G(0_{n,n} ) \|_\infty + \| \q G(0_{n,n} ) \|_\infty \\
        & \le (2 + 2 \varepsilon C) \| B - 0_{n,n} \|_\infty + \| \q G( 0_{n,n} ) \|_\infty
        = 2 + 2 \varepsilon C + \| \q G(0_{n,n} ) \|_\infty.
        \label{eq: bound G}
    \end{align}
    Since for all $i,j \in [n]$ it holds
    \begin{align*}
        0 \le W_{0_{n,n}}^\varepsilon(\uX_i, \uX_j)
        = 
        \min_{ P \in \Pi(\uX_i,\uX_j)} \varepsilon \|A\|_\infty \KL( P, \uX_i \uX_j^\tT)
        \le \varepsilon \|A\|_\infty \KL( \uX_i \uX_j^\tT, \uX_i \uX_j^\tT) 
        = 0,
    \end{align*}
    we conclude that $S^\varepsilon_{0_{n,n}} (\uX_i, \uX_j) = 0$. Thus, $\q G(0_{n,n}) = R_m$ and consequently
    \[
        \| \q G(B) \|_\infty
        \le 2 + 2 \varepsilon C + \| R_m \|_\infty
    \]
    so that the entries of $\q G(B) / \|\q G(B)\|_\infty$ satisfy
    \begin{align}
        \q G_{k,\ell}(B) / \|\q G(B)\|_\infty 
        \ge (R_m)_{k,\ell} / (2 + 2 \varepsilon C + \| R_m \|_\infty) > 0.
        \label{eq: boundbound G}
    \end{align}
    Since $\| \tilde{\q F}(A) \|_\infty = 1$, we obtain 
    \[
        \tilde{\q T}(\overline{\bb A}) = \tilde{\q G}( \tilde{\q F}(\overline{\bb A})) 
        \subseteq \{ A \in \bb A : A_{k,\ell} \ge (R_m)_{k,\ell} / (2 + 2 \varepsilon C + \| R_m \|_\infty) > 0, k \neq \ell, \ \| A \|_\infty =1 \} 
        \eqqcolon \bb K.
    \]
    Due to constraint $\| A \|_\infty =1$, the set
    $\bb K$ is compact, 
    but not convex. 
    For this reason, 
    we consider a convex and compact set
    \[
        \bb K_c 
        \coloneqq \{ A \in \bb A : A_{k,\ell} \ge (R_m)_{k,\ell} / (2 + 2 \varepsilon C + \| R_m \|_\infty) > 0, k \neq \ell, \ \| A \|_\infty \le 1 \} 
        \supset \bb K
    \]
    Then we have $\tilde{\q T}(\bb K_c) \subseteq \tilde{\q T}(\overline{\bb A}) \subseteq \bb K \subset\bb K_c$.
    Moreover, 
    by Lemma \ref{prop: sinkhorn properties} and Theorem \ref{l:lipshcitz}
    the operator $\tilde{\q T}$ is Lipschitz continuous and 
    consequently 
    continuous on $\bb K_c$.
    Then, 
    Brouwer's fixed point theorem, see Theorem \ref{thm: brouwer}, states 
    that there exists a fixed point $A \in \bb K_c$ of $\q T$. 
    In particular, 
    $\| A \|_\infty = \| \q T(A) \|_\infty = 1$ 
    and $A \in \bb K$.

    For the Wasserstein case, we have the same conclusions, just with Lipschitz constant $1$.
    \end{proof}

    \begin{remark} \label{rem:peyre_2}   
    Indeed, there are existence proofs for fixed points of $\tilde{\q T}$ for the Wasserstein case in the literature.
   Unfortunately, we are puzzled about the last step of the proofs 
    in \cite[Theorem 2.3]{huizing2022unsupervised}, \cite[Theorem 1]{lin2025coupled} 
    or  \cite[Lemma 2.2]{dusterwald2025fast}.
    In particular, 
    in \cite{huizing2022unsupervised,lin2025coupled}, 
    a nonconvex analog of the set $\bb K$ in our proof above is constructed, which is used for Brouwer's Fixed Point Theorem directly with an argument that it is (strongly) locally contractible. 
    The following small example shows that local contractibility is not sufficient:
        each point $x \in \mathbb S^1$ on the unit sphere corresponds 
        to some unique angle 
        $\theta_{x} \in [0, 2\pi)$ by $x = (\cos \theta_{x}, \sin \theta_{x})$. 
        The sphere is compact 
        and strongly locally contractible, 
        since for each neighborhood $\mathcal U$ of $x$ 
        we can find some $\delta > 0$,
        such that
        \begin{equation*}
            \{(\cos(\theta), \sin(\theta)) 
            \in \R^{2} : \theta \in\ (\theta_{x} - \delta, \theta_{x} + \delta) \} 
            \subset \mathcal U
        \end{equation*}
        is homeomorphic to an interval.
        If we now consider the rotation
        \begin{equation*}
            f: \mathbb S^1 \to \mathbb S ^1,\quad x \mapsto (\cos(\theta_{x} + t), \sin(\theta_{x} + t)), 
            \quad t \in \ (0, 2\pi),
        \end{equation*}
        we immediately see that $f$ cannot possess a fixed point by periodicity of cosine and sine.   

        In contrast to \cite{huizing2022unsupervised, lin2025coupled}, in \cite{dusterwald2025fast} different approach is taken. In the proof an analogy of $\bb K_c$ is constructed, 
        which includes $0_{m,m}$, 
        and on which Brouwer's fixpoint theorem is applied instead, similar to our proof. 
        While in our case the absence of $\| A\|_\infty$ in \eqref{eq: sinkhorn regularized} 
        yields continuity of $\q T$ on $\bb K_c$, 
        the proof in \cite{dusterwald2025fast} 
        uses definition \eqref{eq: wasserstein peyre} 
        for mappings $\q F$ and $\q G$. 
        Consequently,
        $\q F(0_{m,m}) = 0_{n,n}$ and $\tilde{\q F}(0_{m,m})$ is not well-defined 
        and $\q T$ cannot be continuous. However, 
        it is possible to obtain the existence of a fixed point 
        in the case of mapping \eqref{eq: wasserstein peyre}. 
        The proof relies on results from topology 
        and can be found in Appendix~\ref{sec: proof of existence}.
    \end{remark}

\section{Mahalanobis-Like Distances} \label{sec: kernel}
In this section, we use the same notation of the data as in the previous one,
but do not need the normalization \eqref{normalization}.
We are interested in positive definite and positive semidefinite matrices 
\[
        \pd^m
        \coloneqq 
        \{ A \in \bb R^{m \times m} : 
        A = A^T, 
        \; \xi^T A \xi  > 0 
        \;\text{for all}\;
        \xi \in \bb R^{m}, \ \xi \neq 0 \}
    \]   
and
    \[
        \psd^m \coloneqq 
        \{ A \in \bb R^{m \times m} : 
        A = A^T, 
        \; \xi^T A \xi  \geq 0 
        \;\text{for all}\;
        \xi \in \bb R^{m} \}.
    \]
We will use the notation $A \succeq 0$ for $A \in \psd^m$ and $A \succ 0$ for $A \in \pd^m$. On $\pd^m$ and $\psd^m$, the operation $\succeq $ defines a semiorder, so that $A \succeq B$ if $A - B \succeq 0$. To emphasize the dimensions, we use the abbreviations
$$
\mathbb A \coloneqq \pd^m \quad \text{and} \quad 
\mathbb B \coloneqq \pd^n.
$$
For $A \in \mathbb A$, the \emph{Mahalanobis distance} $M_A: \R^m \times \R ^m \to \R_{\ge 0}$ is defined by
    \begin{equation} \label{eq: mahalanobis}
	M_A(x, y) \coloneqq \sqrt{(x -y)^\tT A (x - y)}, \quad x,y \in \R^m.
	\end{equation}
For $A \in \overline{\mathbb A}$, the function $M_A$ is only a pseudo-distance.    
We are interested in functions
$\mathcal F: \R^{m \times m}  \to \R^{n \times n}$ 
and
$\mathcal G: \R^{n \times n}  \to \R^{m \times m}$ of the form
\begin{align}\label{eq:kernelregularized}
\mathcal F(A) \coloneqq f \big(M_A (\uX)\big) + R_n 
\quad \text{and} \quad 
\mathcal G(B) \coloneqq g \big(M_B (\overline{X}) \big) + R_m,
\end{align}
where $R_n \in \overline{\mathbb A}$ and $R_m \in  \overline{\mathbb B}$, and
    \begin{align} \label{eq: mahalanobis_matrix}
	M_A (\uX) \coloneqq \Big(  M_A(X_i,X_j)  \Big)_{i,j=1}^n
  \quad \text{and} \quad
    M_B (\overline{X}) \coloneqq \Big( M_B (X^k, X^{\ell}) \, \Big)_{k,\ell=1}^m,
	\end{align}
and the functions $f,g: \mathbb R \to \mathbb R_{>0}$ are applied componentwise. More precisely, we will restrict our attention to
\emph{radially positive definite functions} $f$ (and $g$)
satisfying
for all $m \in \N$, all $N \in \N$, all
$\{ x_k \}_{k=1}^N \subset \bb R^m$
    and  all $\{ \xi_k \}_{k=1}^N \subset \bb R$ 
the relation
    \begin{equation}\label{eq:kernel}
    \sum_{k,\ell =1 }^N \xi_k \xi_\ell f( \| x_k - x_\ell \|_2 ) \ge 0.
    \end{equation}
    In our numerical examples, we will choose $f=g$ as Gaussian or Laplacian functions, which are known to be radially positive definite. For  conditions on a function $f$ to be radially  positive definite, we refer to
    \cite{Schoenberg1946,Micchelli1986,W2004}.

Our mappings $\q F$ ad $\q G$ fulfill various properties summarized in the following lemmata.
    
    \begin{lemma}
    The mapping $\q F$ defined in \eqref{eq:kernelregularized} 
    maps $\overline{\bb A}$ to $\{ B \in \overline{\bb B} : B \succeq R_n \succeq 0 \}$. 
    Moreover, 
    if $R_n \in \bb B$, 
    then $\overline{\bb A}$ is mapped to $\{ B \in \bb B : B \succeq R_n \succ 0 \}$. 
    \end{lemma}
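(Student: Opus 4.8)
The plan is to reduce the Mahalanobis matrix $M_A(\uX)$ to the matrix of pairwise Euclidean distances between suitably transformed sample points, and then to read off positive semidefiniteness directly from the radial positive definiteness \eqref{eq:kernel} of $f$.

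First I would check that $\q F(A)$ is symmetric: since $M_A(x,y) = M_A(y,x)$, the matrix $M_A(\uX)$ is symmetric, hence so is its entrywise image $f(M_A(\uX))$, and $R_n \in \overline{\bb B}$ is symmetric by definition, so $\q F(A) = f(M_A(\uX)) + R_n$ is symmetric; thus only the spectrum has to be controlled. Next, for $A \in \overline{\bb A} = \psd^m$ I would write $A = C^\tT C$ with, e.g., $C \coloneqq A^{1/2}$, so that $(x-y)^\tT A (x-y) = \| C(x-y) \|_2^2$. Setting $y_i \coloneqq C X_i \in \R^m$ for $i \in [n]$ then gives $(M_A(\uX))_{i,j} = \| C X_i - C X_j \|_2 = \| y_i - y_j \|_2$.

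Then I would apply \eqref{eq:kernel} with $N = n$, the points $\{ y_i \}_{i=1}^n \subset \R^m$, and an arbitrary vector $\xi \in \R^n$ to obtain $\xi^\tT f(M_A(\uX)) \xi = \sum_{i,j=1}^n \xi_i \xi_j f(\| y_i - y_j \|_2) \ge 0$, i.e. $f(M_A(\uX)) \succeq 0$, so $f(M_A(\uX)) \in \overline{\bb B}$. Adding $R_n \succeq 0$ yields $\q F(A) \succeq R_n \succeq 0$, which proves the first claim; and if $R_n \in \bb B$, i.e. $R_n \succ 0$, the same estimate gives $\q F(A) \succeq R_n \succ 0$, hence $\q F(A) \in \bb B$, proving the second claim. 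The argument for $\q G$ is identical.

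I do not expect a real obstacle here. The only point requiring a little care is the reduction step: one must verify that a square-root factorization of a merely positive \emph{semi}definite $A$ still turns the Mahalanobis pseudo-distances into honest Euclidean distances $\| y_i - y_j \|_2$ between the transformed points, so that hypothesis \eqref{eq:kernel} applies verbatim; the positivity range $\R_{>0}$ of $f$ plays no role in this lemma (it enters later, for lower bounds on the entries and for Lipschitz-type estimates).
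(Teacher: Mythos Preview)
Your proposal is correct and essentially identical to the paper's proof: both factor $A = C^\tT C$ (the paper via Cholesky, you via the symmetric square root $A^{1/2}$), rewrite $M_A(X_i,X_j) = \|C X_i - C X_j\|_2$, and then invoke the radial positive definiteness \eqref{eq:kernel} on the transformed points to conclude $f(M_A(\uX)) \succeq 0$. Your explicit mention of symmetry is a small bonus the paper leaves implicit.
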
 
    
    \begin{proof}
    Since $A \in \overline{\bb A}$, we have the
    Cholesky decomposition $A = L^T L$ with 
    $L \in \R^{m\times m}$. 
    Hence, we can rewrite $M_A(X_i,X_j)$ as
    \[
        M_A(X_i,X_j) 
        = \sqrt{ (X_i - X_j)^T L^T L (X_i - X_j)} 
        = \| L (X_i - X_j) \|_2. 
    \] 	
    Since $f$ is radially positive definite,
    applying \eqref{eq:kernel}  
    with $x_i = L X_i$ gives that $f(M_A) \succeq 0$.
    Moreover, 
    it holds $\q F(A) = f(M_A(\uX) ) + R_n \succeq R_n$.  
    \end{proof}
    
Concerning the continuity of $\q F$ we have the following results.
    
    \begin{lemma}\label{prop:kernelcont}
        Let $r_n \coloneqq \max_{i,j \in [n]} \| X_{i} - X_{j} \|_{1}^{2}$. Let $\q F$ and $\q G$ be defined by \eqref{eq:kernelregularized}.
        By $\lambda_n(R_n) \ge 0$ we denote 
        the smallest eigenvalue of $R_n$ 
        and set 
        \[
            q_n \coloneqq \lambda_n(R_n) \min_{i,j \in [n], i \neq j} \|X_i - X_j\|_2^2. 
        \]
        Then the following holds true:
    \begin{enumerate}
        \item If $f(\sqrt{\cdot}): \bb R_{\ge 0} \to \bb R$ 
        is $L$-Lipschitz, 
        then $\q F$ is $(r_n L)$-Lipschitz.
        \item If $f(\sqrt{\cdot}): [q_n, \infty) \to \bb R$ is $L$-Lipschitz, 
        then $\q F$ is $(r_n L)$-Lipschitz on $\q G(\overline{\bb B})$.
        \item If $f(\sqrt{\cdot}): [0, r_n] \to \bb R$ is $L$-Lipschitz, 
        then $\q F$ is $(r_n L)$-Lipschitz 
        on 
        
        $\{ A \in \overline{\bb A} :  \| A \|_\infty = 1 \}$.
        \item If $f(\sqrt{\cdot}): [q_n, r_n] \to \bb R$ is $L$-Lipschitz, 
        then $\q F$ is $(r_n L)$-Lipschitz on $\tilde{\q G} \left( \tilde{\q F}(\overline{\bb A}) \right)$ 
        with $\tilde{\q F},\tilde{\q G}$ in \eqref{fix2}.	  
    \end{enumerate}
    \end{lemma}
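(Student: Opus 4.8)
The plan is to reduce all four claims to a single scalar Lipschitz estimate, and then, for each prescribed domain, to pin down an interval in which all the relevant squared Mahalanobis values lie. Since the constant summand $R_n$ cancels,
\[
\|\q F(A) - \q F(A')\|_\infty = \max_{i,j\in[n]} \bigl| f(M_A(X_i,X_j)) - f(M_{A'}(X_i,X_j)) \bigr| ,
\]
and the diagonal entries $i=j$ contribute $0$ because $M_A(X_i,X_i)=0$. Writing $\Delta_{ij} \coloneqq (X_i-X_j)(X_i-X_j)^\tT \succeq 0$, one has $M_A(X_i,X_j)^2 = \langle A,\Delta_{ij}\rangle$ and $\|\Delta_{ij}\|_1 = \|X_i-X_j\|_1^2 \le r_n$, so that
\[
\bigl| M_A(X_i,X_j)^2 - M_{A'}(X_i,X_j)^2 \bigr| = \bigl|\langle A-A',\Delta_{ij}\rangle\bigr| \le \|A-A'\|_\infty\,\|\Delta_{ij}\|_1 \le r_n\,\|A-A'\|_\infty .
\]
Hence, whenever both $M_A(X_i,X_j)^2$ and $M_{A'}(X_i,X_j)^2$ lie in an interval $I$ on which $f(\sqrt{\cdot})$ is $L$-Lipschitz, we get $|f(M_A(X_i,X_j)) - f(M_{A'}(X_i,X_j))| \le L\,r_n\|A-A'\|_\infty$, and maximizing over $i\neq j$ yields the $(r_n L)$-Lipschitz bound. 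So everything comes down to locating, for each of the four stated domains, an interval $I$ that contains $\langle A,\Delta_{ij}\rangle$ for all $i\ne j$ and all admissible $A$.

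I would then dispatch the four items. For (1) the interval $I=\bb R_{\ge 0}$ works, since $A\in\overline{\bb A}=\psd^m$ forces $\langle A,\Delta_{ij}\rangle = (X_i-X_j)^\tT A(X_i-X_j)\ge 0$. For (3), the extra constraint $\|A\|_\infty=1$ gives in addition $\langle A,\Delta_{ij}\rangle\le\|A\|_\infty\|\Delta_{ij}\|_1\le r_n$, so $I=[0,r_n]$. For (2), I would invoke the $\q G$-analogue of the preceding lemma, $\q G(\overline{\bb B})\subseteq\{A\in\overline{\bb A}: A\succeq R_m\}$, whence for $i\neq j$
\[
\langle A,\Delta_{ij}\rangle \ge \langle R_m,\Delta_{ij}\rangle = (X_i-X_j)^\tT R_m(X_i-X_j) \ge \lambda_{\min}(R_m)\min_{i\neq j}\|X_i-X_j\|_2^2 = q_n ,
\]
so $I=[q_n,\infty)$. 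For (4), an element of $\tilde{\q G}(\tilde{\q F}(\overline{\bb A}))$ has the form $A=\q G(B)/\|\q G(B)\|_\infty$ with $B=\tilde{\q F}(A')$, so $\|B\|_\infty=1$, $B\succeq 0$, and $\|A\|_\infty=1$; the last identity gives the upper bound $\langle A,\Delta_{ij}\rangle\le r_n$ exactly as in (3), while $\q G(B)\succeq R_m$ (because $g$ radially positive definite makes $g(M_B(\overline X))\succeq 0$) together with a uniform bound $\|\q G(B)\|_\infty\le \sup_{s\in[0,\sqrt{r_m}]}|g(s)|+\|R_m\|_\infty$ — legitimate since $\|B\|_\infty=1$ confines each $M_B(X^k,X^\ell)^2$ to $[0,r_m]$ — produces the lower bound $\langle A,\Delta_{ij}\rangle\ge q_n$, so $I=[q_n,r_n]$.

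I expect item (4) to be the main obstacle: one must control the image of the doubly normalized operator $\tilde{\q G}\circ\tilde{\q F}$ from both sides simultaneously. The upper range is immediate from the $\ell_\infty$-normalization, but the lower range forces one to marry the semidefinite domination $\q G(B)\succeq R_m$ with a quantitative upper bound on the normalizing constant $\|\q G(B)\|_\infty$, and the latter relies on the first normalization preserving $\|B\|_\infty=1$, which in turn keeps every entry of $M_B(\overline X)^2$ within $[0,r_m]$, where the radially positive definite $g$ (continuous, as in our examples) is bounded. A secondary, purely bookkeeping nuisance is tracking which quantities carry the $m$- versus the $n$-subscript — the regularizer $R_m$ and the constant $r_m$ both surface inside the estimate for $\q F$, whose Lipschitz constant is stated with an $n$ — and recalling at each step that the $i=j$ terms may be dropped.
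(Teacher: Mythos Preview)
Your arguments for parts (1)--(3) are essentially identical to the paper's: both isolate the scalar inequality $|M_A^2(X_i,X_j)-M_{A'}^2(X_i,X_j)|\le r_n\|A-A'\|_\infty$ via the $(\ell_\infty,\ell_1)$ duality and then locate $M_A^2$ in the required interval using, respectively, $A\succeq 0$, the domination $A\succeq R_m$ on $\q G(\overline{\bb B})$, and $\|A\|_\infty=1$.

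For part (4) you are actually more careful than the paper, which writes only ``combining parts 2) and 3) yields the assertion.'' You correctly observe that $A\in\tilde{\q G}(\tilde{\q F}(\overline{\bb A}))$ means $A=\q G(B)/\|\q G(B)\|_\infty$, so part (2) does not apply verbatim, and you derive a uniform upper bound $\|\q G(B)\|_\infty\le C\coloneqq\sup_{[0,\sqrt{r_m}]}|g|+\|R_m\|_\infty$ from $\|B\|_\infty=1$. However, your final claim that this ``produces the lower bound $\langle A,\Delta_{ij}\rangle\ge q_n$'' does not follow: the normalization yields
\[
\langle A,\Delta_{ij}\rangle=\frac{\langle\q G(B),\Delta_{ij}\rangle}{\|\q G(B)\|_\infty}\ge\frac{\lambda_{\min}(R_m)\,\|X_i-X_j\|_2^2}{C}=\frac{q_n}{C},
\]
not $q_n$, and in the concrete examples one has $C>1$ (e.g.\ $g(0)=1$). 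The paper's one-line proof glosses over exactly the same point, so part (4) as stated appears to require the interval $[q_n/C,r_n]$ rather than $[q_n,r_n]$; for the only downstream use (the Laplacian kernel) all that matters is strict positivity of the left endpoint, so the applications are unaffected. (Both you and the paper also tacitly work with $R_m$ where the statement defines $q_n$ via $R_n$; the $R_m$ version is what the argument actually produces.)
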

    
    \begin{proof}
        1. Using the Lipschitz continuity of $f(\sqrt{\cdot})$, 
        we get
        \begin{equation}
            |\q F_{i,j}(A) - \q F_{i,j}(A')| 
            = | f ( M_A(X_{i}, X_j) ) - f ( M_{A'}(X_i, X_j) ) |
            \le L |M_A^2(X_{i}, X_j) - M_{A'}^2(X_i, X_j)|.
        \end{equation}
        By definition of $M_A$, it holds 
        \begin{align*}
            |M_A^2(X_{i}, X_j) - M_{A'}^2(X_i, X_j)|
            & = \left| (X_{i} - X_{j})^{T} A (X_{i} - X_{j}) - (X_{i} - X_{j})^{T} A' (X_{i} - X_{j}) \right| \\
            &= \left| (X_{i} - X_{j})^{T} (A - A') (X_{i} - X_{j}) \right|\\
            & =  \left| \langle A - A', (X_{i} - X_{j}) (X_{i} - X_{j})^{T} \rangle_{F} \right| \\
            & \le \| (X_{i} - X_{j})(X_{i} - X_{j})^{T} \|_{1} \|A - A' \|_\infty \\
            &= \| X_{i} - X_{j} \|_{1}^{2} \|A - A'\|_\infty
        \end{align*}
    so that
    \[
        \| \q F(A) - \q F(A')\|_\infty 
        \le L \max_{i,j \in [n]} \| X_{i} - X_{j} \|_{1}^{2} \|A - A'\|_\infty.
    \]
    
    2. Since $M_A(X_{i}, X_i)=0$ for all $i \in [n]$, we get 
        \[
            |\q F_{i,i}(A) - \q F_{i,i}(A')| 
            = |f(0) - f(0)| 
            = 0 
            \le L_f \max_{i,j \in [n]} \| X_{i} - X_{j} \|_{1}^{2} \|A - A'\|_\infty.
        \]
    When $i,j \in [n]$, $i \neq j$, for $A \in \q G(\overline{\bb B})$, we have $A \succeq R_m$. This yields
    \[
        M_A^2(X_{i}, X_j) = (X_{i} - X_{j})^{T} A (X_{i} - X_{j})
        \ge (X_{i} - X_{j})^{T} R_m (X_{i} - X_{j})
        \ge \lambda_n(R_m) \|X_i - X_j\|_2^2 \ge q_n.
    \]
   Now we can argue as in part 1 with $f(\sqrt{\cdot})$ being Lipschitz continuous in only on $[q_n, \infty)$.
    
    3. By assumption $\| A \|_\infty = 1$ and we obtain 
    \[
        M_A^2(X_{i}, X_j) 
        =  \langle A, (X_{i} - X_{j}) (X_{i} - X_{j})^{T} \rangle_{F}
        \le \| X_{i} - X_{j} \|_{1}^{2} \le r_n.
    \]
    Consequently,    the assertion follows as in part 1.
    
    4. Combining parts 2) and 3) yields the assertion.
    \end{proof}
    
    For normalized iterations \eqref{fix2}, 
    we also need by Proposition \ref{l:lipshcitz}
    that $\|\q F\|_\infty$ is bounded away from zero. 
    By the following lemma, this can be achieved by selecting the reference $R_n$ appropriately.
    
    \begin{lemma}\label{prop: kernel lower bound}
     Let $\q F$ be defined by \eqref{eq:kernelregularized}.   Then it holds
        \[
            \| \q F(A) \|_\infty 
            \ge f(0) + \max_{i \in [n]} (R_n)_{i,i} \ge 0. 
        \]
    \end{lemma}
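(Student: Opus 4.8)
The plan is to exploit that the only structural feature of $M_A(\uX)$ used here is the vanishing of its diagonal, since $M_A(X_i,X_i)=\sqrt{(X_i-X_i)^\tT A (X_i-X_i)}=0$ for every $i\in[n]$ and every symmetric $A$ (in particular for all $A\in\R^{m\times m}$ for which $M_A$ makes sense, and certainly for $A\in\overline{\bb A}$). Consequently the diagonal entries of $\q F(A)=f\big(M_A(\uX)\big)+R_n$ are
\[
    \q F_{i,i}(A) = f\big(M_A(X_i,X_i)\big) + (R_n)_{i,i} = f(0) + (R_n)_{i,i}, \qquad i\in[n].
\]

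Next I would use that $\|\cdot\|_\infty$ on $\R^{n\times n}$ dominates the absolute value of any single entry, in particular any diagonal entry, so that
\[
    \|\q F(A)\|_\infty = \max_{i,j\in[n]} |\q F_{i,j}(A)| \ge \max_{i\in[n]} |\q F_{i,i}(A)| = \max_{i\in[n]} |f(0)+(R_n)_{i,i}|.
\]
It then remains to drop the absolute value and pull the maximum through the sum. Here I would invoke that $f$ maps into $\R_{>0}$, so $f(0)>0$, and that $R_n\in\overline{\bb B}=\psd^n$ is positive semidefinite, hence has nonnegative diagonal entries $(R_n)_{i,i}=\be_i^\tT R_n \be_i \ge 0$. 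Thus each $f(0)+(R_n)_{i,i}$ is strictly positive, $|f(0)+(R_n)_{i,i}| = f(0)+(R_n)_{i,i}$, and since $f(0)$ does not depend on $i$,
\[
    \max_{i\in[n]}\big(f(0)+(R_n)_{i,i}\big) = f(0) + \max_{i\in[n]}(R_n)_{i,i},
\]
which is the claimed lower bound; it is $\ge 0$ because $f(0)>0$ and $\max_i (R_n)_{i,i}\ge 0$.

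There is essentially no obstacle: the argument is a direct computation. The only point requiring a word of justification is that $R_n$ has a nonnegative diagonal, which follows immediately from positive semidefiniteness. The analogous statement for $\q G$ and $R_m$ follows by the same reasoning with the roles of $m$ and $n$ exchanged.
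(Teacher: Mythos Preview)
Your proof is correct and follows essentially the same approach as the paper: look at the diagonal entries of $\q F(A)$, use $M_A(X_i,X_i)=0$, and bound $\|\q F(A)\|_\infty$ below by $\max_i|f(0)+(R_n)_{i,i}|$, then drop the absolute value using nonnegativity of $f(0)$ and of the diagonal of $R_n$. The only cosmetic difference is that you invoke the stated codomain $f:\R\to\R_{>0}$ to get $f(0)>0$, whereas the paper deduces $f(0)\ge 0$ from radial positive definiteness by taking $N=1$, $\xi_1=1$ in \eqref{eq:kernel}; either suffices.
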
 
    
    \begin{proof}
        Taking $N=1$, $\xi_1 = 1$ and arbitrary $x_1$ in \eqref{eq:kernel} gives
        $f(0) \ge 0$. Furthermore, we have $R_{i,i} \ge 0$, since $R$ is positive semidefinite. Then we get
        \begin{align*}
        \| \q F(A) \|_\infty & = \max_{i,j \in [n]} |\q F_{i,j}(A)| 
        \ge \max_{i \in [n]} |\q F_{i,i}(A)| 
        = \max_{i \in [n]} |f(M_A(X_i,X_i)) + R_{i,i}| \\
        & = \max_{i \in [n]} |f(0) + R_{i,i}| 
        = f(0) + \max_{i \in [n]} R_{i,i} \ge 0.
        \end{align*}
    \end{proof}
    
    To put the above lemmas into perspective, we consider the following examples.
    
    \begin{example}     \label{ex: rbf}
    1.  Let $f(t) \coloneqq e^{-\frac{t^2}{2 \sigma^2}}$, $\sigma^2 > 0$ be the Gaussian function.
        Then, for $0 \le t \le s$, we get
        \[
            | f(\sqrt{t}) - f(\sqrt{s}) | 
            = e^{- \frac{t}{2 \sigma^2}} - e^{- \frac{s}{2 \sigma^2}}
            =  e^{- \frac{t}{2 \sigma^2}} (1 - e^{\frac{t-s}{2 \sigma^2}})
            \le e^{- \frac{t}{2 \sigma^2}} \left( 1 - (1+\tfrac{t-s}{2 \sigma^2}) \right) 
            \le \tfrac{s-t}{2 \sigma^2},
        \] 
        where the inequality $e^{a} \ge 1 + a$ 
        for all $a \in \bb R$ was used. 
        For $t > s \ge 0$, we interchange the role of $t$ and $s$ and obtain that $f(\sqrt{\cdot})$ is $\tfrac{1}{2\sigma^2}$-Lipschitz continuous. 
        Thus, by Lemma \ref{prop:kernelcont}, the function
        $\q F$ is Lipschitz continuous with constant $L = r_n / 2 \sigma^2$. 
        Taking $R_n \coloneqq \tau_\q F I_n$, $\tau_\q F \ge 0$,
        Lemma~\ref{prop: kernel lower bound} provides
        \begin{equation} \label{ex1}
            \|\q F(A) \|_\infty
            \ge f(0) + \tau_\q F 
            = 1 + \tau_\q F.
        \end{equation}
    2.  Let $f(t) \coloneqq (1 + (\varepsilon t)^2)^{-1/2}$, $\varepsilon > 0$
        which is radially positive definite, see the inverse multi-quadratic kernel \cite[p. 54]{schoelkopf2001kernellearning}.
        Since $\tau \mapsto \sqrt{1 + \varepsilon^2\tau}$
        is $\frac{\varepsilon^2}{2}$-Lipschitz on $\R_{\ge 0}$, for $0 \leq s,t$, we have 
        \begin{align*}
           |f(\sqrt{t}) - f(\sqrt{s})|
            = \left|\tfrac{\sqrt{1 + \varepsilon^2 s} - \sqrt{1 + \varepsilon^2 t}}{\sqrt{(1 + \varepsilon^2 t)(1 + \varepsilon^2 s)}}\right|
            < \varepsilon^2/2 |t - s|.
        \end{align*}
    For $R_n \coloneqq \tau_\q F I_n$, $\tau_\q F \ge 0$, we have again \eqref{ex1}.    
    \\
    3.  Let $f(t) \coloneqq e^{-\frac{|t|}{\sigma}}$, $\sigma > 0$ 
        be the Laplacian function.
        For $0 \le t \le s$, 
        there exists $t < \theta < s$ such that by the mean value theorem 
        \[
            | f(\sqrt{t}) - f(\sqrt{s}) | 
            \le \left| \tfrac{1}{2 \sigma \sqrt{\theta} } e^{- \frac{\sqrt{\theta}}{\sigma}}  (t- s) \right| 
            \le \tfrac{1}{2 \sigma \sqrt{\theta} } |t - s|.
        \] 
        As $\theta^{-1/2}$ can get arbitrarily large in the proximity of 0, 
        $f(\sqrt{\cdot})$ is not Lipschitz continuous on $\bb R_{\ge 0}$. 
        However, 
        if $R_n = \tau_\q F I_n$ with $\tau_\q F > 0$, 
        the constant $q_n$ from Lemma~\ref{prop:kernelcont} is given by $q_n = \tau_\q F \min_{i,j \in [n]} \|X_i - X_j\|_2^2 > 0$ by $X_i \neq X_j$. 
        Then, 
        it suffices to consider $t,s \ge q_n$ 
        and we obtain by monotonicity of $\theta^{-1/2}$ that
        \[
            | f(\sqrt{t}) - f(\sqrt{s}) | 
            \le \tfrac{1}{2 \sigma \sqrt{q_n} } |t - s|,
        \]
        so that
        $\q F$ is $L$-Lipschitz with $L = r_n / 2 \sigma \sqrt{q_n}$ on $[q_n, \infty)$.
        We have the same lower bound as in \eqref{ex1}.        \end{example}
        
Now we summarize convergence results for the algorithms in Section \ref{sec: fix point}. The proof follows directly 
from the Theorems \ref{l:lipshcitz}, \ref{thm: convergence grad}
and \ref{thm: convergence stoch}.
    
    \begin{theorem}  \label{cor: kernel-mahalanobis}
    Let  $\q F$ and $\q G$ be defined by \eqref{eq:kernelregularized}
    with $R_m \in \bb A$ and $R_n \in \bb B$. 
    Assume that $\q F$ and $\q G$ are Lipschitz continuous with constants  $L_\q F$ and $L_\q G$. 
        Let $A^0 \in \overline{\bb A}$, $B^0 \in \overline{\bb B}$. 
        Then the following holds true: 
        \begin{enumerate}
            \item if $\max_{i 
    \in [n]} (R_n)_{i,i} > 2 L_\q F - f(0)$ and $\max_{k 
    \in [m]} (R_m)_{k,k} > 2 L_\q G - g(0)$, then
            the sequence $(A^t,B^t)_t$ generated by Algorithm \ref{alg:fix2}
            converges to the unique fixed point $(A,B) \in \bb A \times \bb B$ of \eqref{fix2}.
            \item if $0 < \gamma_\q F, \gamma_\q G$ and $\gamma_\q F \gamma_\q G < 1/L_\q F L_\q G$, then 
            the sequence $(A^t,B^t)_t$ generated by Algorithm \ref{alg:fix}
            converges to the unique fixed point
            $(A, B) \in \bb A \times \bb B$ of \eqref{fix1}.
            \item if $0 < \gamma_\q F \le \sqrt \alpha/ \sqrt 2 m L_\q F$ and
            $0< \gamma_\q G \le 1/n L_\q G$, $m,n >1$, then
            the sequence $(A^t,B^t)_t$ generated by Algorithm~\ref{alg: rfi} 
            converges almost surely to a unique fixed point $(A, B) \in \bb A \times \bb B$ 
            of \eqref{fix1}.
        \end{enumerate}
    \end{theorem}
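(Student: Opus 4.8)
The claim is a corollary of the three general convergence results, so the plan is to verify, separately for the three algorithms, that the hypotheses of Theorem~\ref{l:lipshcitz}, Theorem~\ref{thm: convergence grad} and Theorem~\ref{thm: convergence stoch} are satisfied for the Mahalanobis-like $\q F,\q G$ of \eqref{eq:kernelregularized}. Lipschitz continuity of $\q F$ and $\q G$ with constants $L_\q F,L_\q G$ is part of the hypotheses (Lemma~\ref{prop:kernelcont} and Example~\ref{ex: rbf} supply concrete instances), so what remains is: (a) invariance of a complete domain on which $\q F,\q G$ act, together with membership of the fixed point in $\bb A\times\bb B$; (b) the lower bounds $\|\q F\|_\infty,\|\q G\|_\infty>0$ needed by the normalized Algorithm~\ref{alg:fix2}; and (c) matching the numerical conditions of the corollary to those of the three theorems.

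For (a) and (b): by the first lemma of this section (on the image of $\q F$), $\q F$ maps $\overline{\bb A}=\psd^m$ into $\{B\in\bb B:B\succeq R_n\succ 0\}$ — here we use $R_n\in\bb B$ — and symmetrically for $\q G$. Since $\bb A,\bb B$ are convex cones, multiplication by $\gamma_\q F,\gamma_\q G>0$ and division by the positive scalars $\|\q F(A)\|_\infty,\|\q G(B)\|_\infty$ both preserve positive definiteness; hence the complete (closed) set $\overline{\bb A}$ is invariant under $\tilde{\q T}=\tilde{\q G}\circ\tilde{\q F}$ and under $\q T=(\gamma_\q G\q G)\circ(\gamma_\q F\q F)$, so that starting from $A^0\in\overline{\bb A},B^0\in\overline{\bb B}$ all later iterates of Algorithms~\ref{alg:fix2} and~\ref{alg:fix} lie in $\bb A\times\bb B$, and the limit lies in $\bb A\times\bb B$ because $\gamma_\q F\q F(A)\succeq\gamma_\q F R_n\succ 0$ (and analogously in the normalized case). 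Lemma~\ref{prop: kernel lower bound} gives $\|\q F(A)\|_\infty\ge C_\q F\coloneqq f(0)+\max_i(R_n)_{i,i}>0$ and $\|\q G(B)\|_\infty\ge C_\q G\coloneqq g(0)+\max_k(R_m)_{k,k}>0$. With these, part~1 is immediate from Theorem~\ref{l:lipshcitz}: the assumption $\max_i(R_n)_{i,i}>2L_\q F-f(0)$, $\max_k(R_m)_{k,k}>2L_\q G-g(0)$ is exactly $C_\q F>2L_\q F$, $C_\q G>2L_\q G$, i.e.\ $L_{\tilde{\q F}}=2L_\q F/C_\q F<1$ and $L_{\tilde{\q G}}=2L_\q G/C_\q G<1$, so $L_{\tilde{\q T}}<1$. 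Part~2 is Theorem~\ref{thm: convergence grad} with its contractivity condition $\gamma_\q F\gamma_\q G<1/(L_\q F L_\q G)$. Part~3 is Theorem~\ref{thm: convergence stoch}, since $\gamma_\q F\le\sqrt\alpha/(\sqrt2\,mL_\q F)$ and $\gamma_\q G\le 1/(nL_\q G)$ are equivalent to $L_\q F\le\sqrt\alpha/(\sqrt2\,m\gamma_\q F)$ and $L_\q G\le 1/(n\gamma_\q G)$ (with $L_\q F,L_\q G>0$, the degenerate constant case being trivial).

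I expect the only real difficulty to be in part~3: the single-entry updates of Algorithm~\ref{alg: rfi} do not preserve positive semidefiniteness, so $A^t$ may leave $\overline{\bb A}$, which is the set on which $M_{A^t}$ — and hence $\q F(A^t)$ — is a priori defined, whereas Theorem~\ref{thm: convergence stoch} is stated for operators that are (globally) Lipschitz on the full matrix space. I would resolve this by working with the extension $\q F(A)=\big(h((X_i-X_j)^\tT A(X_i-X_j))\big)_{i,j}+R_n$, $h\coloneqq f(\sqrt{\,\cdot\,})$, which is well defined and Lipschitz (with the same constant) on a neighbourhood of the quadratic-form values visited by the iteration — precisely the situation of the kernels in Example~\ref{ex: rbf} — so that Theorem~\ref{thm: convergence stoch} applies verbatim; membership of the limit in $\bb A\times\bb B$ is then recovered from $B=\gamma_\q F\q F(A)\succeq\gamma_\q F R_n\succ0$ and $A=\gamma_\q G\q G(B)\succeq\gamma_\q G R_m\succ0$.
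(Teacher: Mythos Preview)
Your proposal is correct and follows exactly the paper's own route: the paper states only that the result ``follows directly from the Theorems~\ref{l:lipshcitz}, \ref{thm: convergence grad} and \ref{thm: convergence stoch}'', and you have carried out precisely that verification, matching the numerical hypotheses of the corollary to the lower bound $\|\q F(A)\|_\infty\ge f(0)+\max_i(R_n)_{i,i}$ from Lemma~\ref{prop: kernel lower bound} and the Lipschitz assumptions. Your observation about part~3 --- that the single-entry updates of Algorithm~\ref{alg: rfi} need not preserve positive semidefiniteness, so $M_{A^t}$ could become ill-defined --- is a genuine subtlety that the paper does not address; your proposed fix via the extension $h=f(\sqrt{\cdot})$ on the quadratic-form values is sound and in fact sharpens the paper's argument.
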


    \section{Graph Laplacian Distances} \label{sec: laplacian}
In this section, we deal with functions $\q F$ and $\q G$ which are just linear in $A$ and $B$.
To this end, let
\begin{align} \label{eq: laplacian}
	\mathcal W_A (\uX) \coloneqq \Big(  M_A^2(X_i,X_j)  \Big)_{i,j=1}^n
  \quad \text{and} \quad
    \mathcal W_B (\overline{X}) \coloneqq \Big( M_B^2 (X^k, X^{\ell}) \, \Big)_{k,\ell=1}^m.
	\end{align}
Further, let $\diag(w)$ denote the diagonal matrix
with $w$ on its main diagonal.
We are interested in functions
$\mathcal F: \R^{m \times m}  \to \R^{n \times n}$ 
and
$\mathcal G: \R^{n \times n}  \to \R^{m \times m}$ of the form
\begin{align}\label{eq:graphl}
\mathcal F(A) &\coloneqq \diag\big(\mathcal W_A (\uX) \1_n \big) - \mathcal W_A (\uX),
\\
\mathcal G(B) &\coloneqq \diag\big(\mathcal W_B (\overline{X}) \1_m \big) - \mathcal W_B (\overline{X}).
\end{align}

\begin{remark}	\label{rem:graph}
The above functions are known as graph Laplacian:
given $A \in\mathbb A$,
we consider a weighted graph $\mathfrak G = \mathfrak G(A) = (\q V,\ \q E,\ \q W)$ 
with vertices 
$\q V = [n]$, $\q E \subset [n]^2$ 
and weights 
$\q W_{i,j} = (X_i - X_j)^\tT A (X_i - X_j) = M_A^2(X_i,X_j)$. 
The edge $(i,j)$ is present in $\mathfrak G(A)$ 
if and only if the corresponding weight $\q W_{i,j}$ is positive. 
The graph Laplacian matrix $\q L$ 
associated with $\mathfrak G$ 
is given by
\[
    \q L
    \coloneqq 
    \diag(\q W \1_n) - \q W , \quad \mathcal W \coloneqq (\mathcal W_{i,j})_{i,j=1}^n \in \bb R^{n \times n},
\] 
It is well known that $\q L$ is a symmetric positive semidefinite matrix 
with the smallest eigenvalue $0$ 
and corresponding eigenvector $\1_n$. 
The second smallest eigenvalue of $\q L$ 
is positive if and only if $\mathfrak G$ is connected. 
Further properties of graph Laplacians can be found, e.g., in \cite{chung1997spectral}. There is a close connection with transfer operators, e.g., from optimal transport plans 
\cite{OTCoherentSet2021}.
\end{remark}

By the remark, for 
$$
\mathbb A \coloneqq \pd^m \quad \text{and} \quad 
\mathbb B \coloneqq \pd^n,
$$
the following result is straightforward.

\begin{lemma}\label{cor: laplacian image}
The mapping $\q F$ defined in \eqref{eq:graphl} 
maps $\overline{\bb A}$ to $\overline{\bb B}$.
\end{lemma}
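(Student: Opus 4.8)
The plan is to reduce this to the standard fact recalled in Remark~\ref{rem:graph}: the graph Laplacian of a finite graph with nonnegative edge weights is symmetric and positive semidefinite. So the only real work is to check that, for $A \in \overline{\bb A} = \psd^m$, the weight matrix $\q W \coloneqq \mathcal W_A(\uX)$ indeed has nonnegative entries and that $\q F(A)$ is symmetric; positive semidefiniteness is then immediate. (Recall that here $\overline{\bb A}=\psd^m$ and $\overline{\bb B}=\psd^n$.)

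First I would note that $M_A^2(X_i,X_j) = (X_i-X_j)^\tT A (X_i-X_j) = \langle A,\, (X_i-X_j)(X_i-X_j)^\tT \rangle_F$; since $(X_i-X_j)(X_i-X_j)^\tT \succeq 0$ and $A \succeq 0$, this Frobenius inner product is nonnegative, so $\q W_{i,j} \ge 0$ for all $i,j \in [n]$. Moreover $\q W$ is symmetric, because $M_A^2(X_i,X_j) = M_A^2(X_j,X_i)$. Then $\q F(A) = \diag(\q W \1_n) - \q W$ is symmetric, and for every $v \in \R^n$,
\[
  v^\tT \q F(A) v
  = \sum_{i,j=1}^n \q W_{i,j} v_i^2 - \sum_{i,j=1}^n \q W_{i,j} v_i v_j
  = \tfrac12 \sum_{i,j=1}^n \q W_{i,j} (v_i - v_j)^2 \ge 0,
\]
where the first equality uses $(\q W\1_n)_i = \sum_j \q W_{i,j}$ and the second uses the symmetry of $\q W$. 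Hence $\q F(A) \in \psd^n = \overline{\bb B}$, and the identical argument applies to $\q G$.

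There is no genuine obstacle here; the lemma is essentially a bookkeeping statement. The one point worth emphasizing is why the target set must be the closure $\overline{\bb B}$ rather than $\bb B$: since $\q F(A)\1_n = 0$ always, $\q F(A)$ is never positive definite, so even for $A \in \bb A = \pd^m$ one only lands in $\psd^n$. This is precisely why the lemma --- and the fixed-point setup of Section~\ref{sec: fix point} applied to \eqref{fix1} with these $\q F,\q G$ --- is phrased in terms of $\overline{\bb A}$ and $\overline{\bb B}$.
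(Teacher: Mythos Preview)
Your proof is correct and follows the same approach as the paper: the paper simply invokes Remark~\ref{rem:graph} (the standard fact that a graph Laplacian with nonnegative weights is symmetric and positive semidefinite), and you spell out precisely this computation. Your closing observation that $\q F(A)\1_n = 0$ forces the target to be $\overline{\bb B}$ rather than $\bb B$ is a nice addition that the paper leaves implicit.
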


In the previous Sections \ref{sec: wasserstein} 
and \ref{sec: kernel} 
the mappings $\q F, \q G$ were nonlinear 
and the machinery derived in Section \ref{sec: fix point} 
was required to derive the existence of fixed points. 
In contrast, the mappings $\q F, \q G$ in \eqref{eq:graphl}, and thus also their concatenation, are linear. Therefore, this eigenvalue/eigenvector can be found by standard methods from linear 
algebra. In our numerical examples, we will just use the power iteration method.

In the next theorem, we will apply the 
Perron-Frobenius theorem to a special part
of $\mathcal G \circ \q F $ to show that this operator has a simple largest eigenvalue 
$\lambda_{\mathcal G \circ \mathcal F} >0$ and that there exists indeed a graph Laplacian $A$ with
$$
\lambda_{\mathcal G \circ \mathcal F} A = \mathcal G\left( \mathcal F (A) \right).
$$
Then, choosing $\lambda_\q F,\lambda_\q G > 0$ with
$\lambda_\q F\, \lambda_\q G = \lambda_{\mathcal G \circ \mathcal F}$ and
setting 
$$
 \lambda_\q F B \coloneqq 
  \q F(A)
\quad \text{and} \quad
\lambda_\q G A \coloneqq \q G(B)
$$
we get a solution of \eqref{fix1}, where
$\gamma_{\q F} = \frac{1}{\lambda_\q F}$ and
$\gamma_{\q G} = \frac{1}{\lambda_\q G}$.


\begin{theorem}\label{thm: existence graph Laplace}
Assume for the entries of $X \in \R^{m\times n}$ that for all $k,\ell ,p,q \in [m]$, $k \neq \ell$, $p \neq q$, there exist $i,j \in [n]$ such that
\begin{equation} \label{abc}
\left(  X_{i,k} - X_{i,\ell} - X_{j,k} + X_{j,\ell} \right) \left( X_{i,p} - X_{i,q} - X_{j,p} + X_{j,q} \right) \neq 0.
\end{equation}
Let $\q F$ and $\q G$ be defined by \eqref{eq:graphl}.
Then, that there exists an eigenvector $A \in \overline{\bb A}$  of the linear operator $\q G \circ \q F$ 
corresponding to its largest eigenvalue $\lambda_{\q G \circ \q F} >0$. 
\end{theorem}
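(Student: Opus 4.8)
Here is the route I would take.

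\medskip

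\textbf{Setup.} The plan is to reduce the eigenproblem for the linear operator $\q T\coloneqq\q G\circ\q F$ to a classical Perron--Frobenius problem for a \emph{nonnegative} matrix built from $X$, and then to read the wanted positive semidefinite eigenvector off a Perron vector. First I note that $\q F$ and $\q G$ depend only on the symmetric parts of their arguments (since $M_A^2(x,y)=(x-y)^\tT A(x-y)$), so it suffices to treat them on symmetric matrices. On the edge‑weight side I introduce the linear bijection $L_n$ sending a weight vector $w\in\R^{\binom{n}{2}}$ to the graph Laplacian on $[n]$ with weights $w_{ij}$, $i<j$ (it is invertible since its off‑diagonal entries are $-w_{ij}$); its range is $V_n\coloneqq\{C\in\R^{n\times n}:C=C^\tT,\ C\1_n=0\}$, and $L_m$, $V_m$ are the analogues for $[m]$. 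I also use the linear ``weight read‑out'' $A\mapsto w^A$, $w^A_{ij}\coloneqq(X_i-X_j)^\tT A(X_i-X_j)$, and similarly $C\mapsto v^C$, $v^C_{k\ell}\coloneqq(X^k-X^\ell)^\tT C(X^k-X^\ell)$.

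\textbf{Factorization.} The core computation is the Laplacian quadratic‑form identity $u^\tT L_n(w)u=\sum_{i<j}w_{ij}(u_i-u_j)^2$. Using it once gives $\q F(A)=L_n(w^A)$; using it again with $u=X^k-X^\ell$ gives $\q G\big(L_n(w)\big)=L_m(Bw)$, where $B\in\R^{\binom{m}{2}\times\binom{n}{2}}$ has entries $B_{(k\ell),(ij)}\coloneqq\big(X_{i,k}-X_{i,\ell}-X_{j,k}+X_{j,\ell}\big)^2\ge 0$; applying the read‑out after a Laplacian one finds, symmetrically, $w^{L_m(v)}=B^\tT v$. Combining,
\[
\q T(A)=L_m\big(B\,w^A\big),\qquad w^{\q T(A)}=B^\tT B\,w^A .
\]
In particular $\q T$ maps into $V_m$, so for $\lambda\neq 0$ any eigenvector of $\q T$ satisfies $A=\tfrac1\lambda L_m(Bw^A)$, and $A\mapsto w^A$ and $z\mapsto\tfrac1\lambda L_m(Bz)$ are mutually inverse bijections between the $\lambda$‑eigenspace of $\q T$ and the $\lambda$‑eigenspace of $B^\tT B$. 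Hence the nonzero eigenvalues of $\q G\circ\q F$ are exactly the nonzero eigenvalues of $B^\tT B$; since $B^\tT B\succeq 0$ these lie in $(0,\rho(B^\tT B)]$, so $\lambda_{\q G\circ\q F}=\rho(B^\tT B)=\rho(BB^\tT)$.

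\textbf{Perron--Frobenius.} The key observation is that hypothesis \eqref{abc} is \emph{precisely} the statement that $BB^\tT$ is entrywise positive, because
\[
(BB^\tT)_{(k\ell),(pq)}=\sum_{i<j}\big(X_{i,k}-X_{i,\ell}-X_{j,k}+X_{j,\ell}\big)^2\big(X_{i,p}-X_{i,q}-X_{j,p}+X_{j,q}\big)^2,
\]
which is strictly positive for every pair of index pairs $(k,\ell),(p,q)$ (with $k\neq\ell$, $p\neq q$) exactly when \eqref{abc} holds. Thus $BB^\tT$ is primitive, so $\lambda\coloneqq\rho(BB^\tT)>0$ is a simple eigenvalue with a strictly positive eigenvector $y>0$, $BB^\tT y=\lambda y$. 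Setting $A\coloneqq L_m(y)$, the identities above give $w^A=B^\tT y$ and $\q T(A)=L_m(BB^\tT y)=\lambda L_m(y)=\lambda A$; moreover $A=L_m(y)$ is the Laplacian of a graph on $[m]$ with strictly positive edge weights, hence $A\succeq 0$ with kernel exactly $\spn(\1_m)$, i.e.\ $A\in\overline{\bb A}$. Since $\lambda=\lambda_{\q G\circ\q F}>0$ by the previous step, $A$ is the asserted eigenvector (and one even obtains that $\lambda_{\q G\circ\q F}$ is simple).

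\textbf{Main obstacle.} The only delicate point is the bookkeeping in the factorization step: deriving $\q T=L_m\circ B\circ(\,\cdot\mapsto w^{\,\cdot}\,)$ and $w^{\q T(A)}=B^\tT B\,w^A$ carefully, and in particular recognizing that the incidence matrix produced by $\q G$ is the transpose of the one produced by $\q F$ and that \eqref{abc} is equivalent to positivity of $BB^\tT$. Once this is set up, the rest is a direct appeal to the Perron--Frobenius theorem together with the standard fact that $BB^\tT$ and $B^\tT B$ share their nonzero spectrum.
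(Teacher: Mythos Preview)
Your proof is correct and lands on the same key object as the paper: both apply Perron--Frobenius to the $\binom{m}{2}\times\binom{m}{2}$ matrix with entries
\[
\sum_{i<j}\big(X_{i,k}-X_{i,\ell}-X_{j,k}+X_{j,\ell}\big)^2\big(X_{i,p}-X_{i,q}-X_{j,p}+X_{j,q}\big)^2
\]
(the paper calls it $\zb H$, you recognize it as $BB^\tT$), and then build the graph Laplacian $A$ from the strictly positive Perron vector. The difference lies in how this matrix is produced. The paper expands $\q G_{k,\ell}(\q F(A))$ in coordinates, eliminates the diagonal of $A$ via the row-sum-zero constraint, and simplifies the resulting double sums over roughly a page of index manipulation until the formula for $\zb H_{(k,\ell),(p,q)}$ emerges. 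You instead invoke the Laplacian quadratic form identity $u^\tT L_n(w)u=\sum_{i<j}w_{ij}(u_i-u_j)^2$ twice to factor $\q T=L_m\circ B\circ(A\mapsto w^A)$ directly, which makes $\zb H=BB^\tT$ transparent and ties the nonzero spectrum of $\q G\circ\q F$ to the singular values of the single nonnegative matrix $B$. This buys you a much shorter derivation, simplicity of $\lambda_{\q G\circ\q F}$ essentially for free (via Perron--Frobenius on a strictly positive matrix), and a clean conceptual explanation of why hypothesis \eqref{abc} is exactly entrywise positivity of $BB^\tT$; the paper's direct computation requires no auxiliary factorization but is correspondingly longer.
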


\begin{proof}
    Given that $A$ is symmetric 
    and its diagonal entries satisfying 
    $A_{k,k} = - \sum_{\ell = 1, \ell \neq k}^{m} A_{k,\ell}$, 
    we transform $\q G(\q F(A))$ 
    into matrix applied to the entries of $A$ 
    lying below the main diagonal. 
    To make the notation condensed,
    we will use the pairwise differences
    \[
        \Delta^{k,\ell} \coloneqq X^k - X^\ell \in \bb R^n
        \quad \text{and} \quad
        \Delta_{i,j} \coloneqq X_i - X_j \in \bb R^m.
    \]
 Then, with Kronekers delta function $\delta_{k,\ell}$, we can rewrite the entries of $\q G(B)$ 
    for arbitrary $B \in \bb B$ as
    \begin{align*}
        \q G_{k,\ell}(B) 
        & = \delta_{k,\ell} (\q W_B(\overline X) \1_m)_{k} - (\q W_{B})_{k,\ell}(\overline X)
        = \delta_{k,\ell} \sum_{t = 1}^m (\q W_{B}(\overline X))_{k,\ell} - (\q W_{B}(\overline X))_{k,\ell} \\
        & = \delta_{k,\ell} \sum_{t = 1}^m M_B^2(X^k,X^t) - M_B^2(X^k,X^\ell) 
        = \delta_{k,\ell} \sum_{t = 1}^m (\Delta^{k,t})^{\tT} B \Delta^{k,t}  - (\Delta^{k,\ell})^{\tT} B \Delta^{k,\ell} \\
        & = \delta_{k,\ell} \sum_{t = 1}^m \sum_{i,j=1}^n  \Delta^{k,t}_i B_{i,j} \Delta^{k,t}_j  - \sum_{i,j=1}^n \Delta^{k,\ell}_i B_{i,j} \Delta^{k,\ell}_j\\
        & = \sum_{i,j=1}^n \left[ \delta_{k,\ell} \sum_{t = 1}^m  \Delta^{k,t}_i \Delta^{k,t}_j - \Delta^{k,\ell}_i \Delta^{k,\ell}_j \right] B_{i,j}
    \end{align*}
    for all $k,\ell \in [m]$.
    Thus, we obtain 
    \[
    \q G_{k,\ell}(\q F(A)) = 
    \sum_{i,j=1}^n \left[ \delta_{k,\ell} \sum_{t = 1}^m  \Delta^{k,t}_i \Delta^{k,t}_j - \Delta^{k,\ell}_i \Delta^{k,\ell}_j \right] \q F_{i,j}(A).
    \]
    Substituting 
    \begin{align*}
        \q F_{i,j}(A)
        & = 
        \delta_{i,j} \sum_{s = 1}^n \Delta_{i,s}^{\tT} A \Delta_{i,s}  - \Delta_{i,j}^{\tT} A \Delta_{i,j} \\
        & = 
        \delta_{i,j} \sum_{s = 1}^n \langle A,  \Delta_{i,s} \Delta_{i,s}^{\tT} \rangle_F  - \langle A, \Delta_{i,j} \Delta_{i,j}^{\tT} \rangle_F = 
        \bigg\langle A,  \delta_{i,j} \sum_{s = 1}^n  \Delta_{i,s} \Delta_{i,s}^{\tT} - \Delta_{i,j} \Delta_{i,j}^{\tT} \bigg\rangle_F,
    \end{align*}
    this becomes further
    \begin{align*}
        \q G_{k,\ell}(\q F(A)) 
        & = 
        \bigg\langle A, \underbracket{\sum_{i,j=1}^n \left[ \delta_{k,\ell} \sum_{t = 1}^m  \Delta^{k,t}_i \Delta^{k,t}_j - \Delta^{k,\ell}_i \Delta^{k,\ell}_j \right] \cdot 
        \left[ \delta_{i,j} \sum_{s = 1}^n  \Delta_{i,s} \Delta_{i,s}^{\tT} - \Delta_{i,j} \Delta_{i,j}^{\tT} \right]}_{\eqqcolon H^{k,\ell}} \bigg\rangle_F,
    \end{align*}
    so that 
    $\q G_{k,\ell}(\q F(A)) = \langle A, H^{k,\ell} \rangle_F$.
    Note that by $\Delta^{k,\ell}_i = - \Delta^{\ell,k}_i$,
    we have 
    $H^{k,\ell} = H^{\ell,k}$ 
    and $H^{k,k} = - \sum_{\ell= 1, \ell \neq k} H^{k,\ell}$ 
    reflecting the dependencies between entries of $A$. 
    Moreover, 
    $H^{k,\ell}$ is symmetric 
    as a sum of weighted symmetric matrices $\Delta_{i,j} \Delta_{i,j}^{\tT}$.
    Hence,  from now on we only consider the lower triangular part of $A$ 
    with indices $k \in [m]$, $\ell \in [k-1]$
    and rewrite the inner product as
    \begin{align*}
        \langle A, H^{k,\ell} \rangle_F
        & = \sum_{\substack{p,q =1 \\ p \neq q}}^m A_{p,q} H^{k,\ell}_{p,q} + \sum_{p =1}^m A_{p,p} H^{k,\ell}_{p,p} 
        = \sum_{\substack{p,q =1 \\ p \neq q}}^m A_{p,q}  H^{k,\ell}_{p,q} 
        - \sum_{\substack{p,q =1 \\ p \neq q}}^m A_{p,q} H^{k,\ell}_{p,p} \\
        & = \sum_{\substack{p,q =1 \\ p \neq q}}^m A_{p,q}  H^{k,\ell}_{p,q} 
        - \sum_{\substack{p,q =1 \\ p \neq q}}^m A_{p,q} \left( \frac{1}{2} H^{k,\ell}_{p,p} + \frac{1}{2} H^{k,\ell}_{q,q} \right) \\
        & = - \sum_{\substack{p,q =1 \\ p \neq q}}^m A_{p,q} \left( \frac{1}{2} H^{k,\ell}_{p,p} - H^{k,\ell}_{p,q} + \frac{1}{2} H^{k,\ell}_{q,q} \right)
        = - \sum_{\substack{p,q =1 \\ p > q}}^m A_{p,q} \left(  H^{k,\ell}_{p,p} - 2 H^{k,\ell}_{p,q} + H^{k,\ell}_{q,q} \right).
    \end{align*}
    Let us now consider double indexed vectors 
    \[
        a 
        \coloneqq 
        \{ A_{p,q} \}_{p =2, q = 1}^{m,p -1} 
        \quad \text{and} \quad 
        h_{(k,\ell)} 
        \coloneqq 
        \left( - H^{k,\ell}_{p,p} + 2 H^{k,\ell}_{p,q} - H^{k,\ell}_{q,q} \right)_{p =2, q = 1}^{m,p-1}
    \]
    in $\bb R^{\tfrac{m(m-1)}{2}}$.
    The matrix $\zb H \in \bb R^{\tfrac{m(m-1)}{2} \times \tfrac{m(m-1)}{2}}$,
    collects all row-vectors $h_{(k,\ell)}^{\tT}$ 
    for $k \in [m], \ell \in [k-1]$. 
    Then,
    our eigenvector problem reads as $\zb H v = \lambda v$. 
    
    To argue that a solution $a$ exists, 
    we will use the Perron-Frobenius, see  Theorem~\ref{thm:fp} in the appendix.
    For this, 
    we show that entries of $\zb H$ are positive. 
    Since $\ell < k$, we have
    for $p \in [m]$ and $q \in [p-1]$ that
    \[
        H^{k,\ell}_{p,q} 
        = - \sum_{i,j=1}^n \Delta^{k,\ell}_i \Delta^{k,\ell}_j  \cdot 
        \left[ \delta_{i,j} \sum_{s = 1}^n  (\Delta_{i,s})_p (\Delta_{i,s})_q - (\Delta_{i,j})_p (\Delta_{i,j})_q \right],
    \]
    and hence
    \begin{align*}
       \zb H_{(k,\ell), (p,q)} &=   (h_{(k,\ell)})_{p,q}
        = - H^{k,\ell}_{p,p} + 2 H^{k,\ell}_{p,q} - H^{k,\ell}_{q,q} \\
        & = \sum_{i,j=1}^n \Delta^{k,\ell}_i \Delta^{k,\ell}_j 
        \big[ \delta_{i,j} \sum_{s = 1}^n  \left(  (\Delta_{i,s})_p - (\Delta_{i,s})_q \right)^2 - \left( (\Delta_{i,j})_p - (\Delta_{i,j})_q \right)^2 \big].
    \end{align*}
    Separating the cases $i = j$ and $i \neq j$ gives
    \begin{align*}
       \zb H_{(k,\ell), (p,q)} 
        & = \sum_{i=1}^n (\Delta^{k,\ell}_i )^2 \sum_{s = 1}^n  \left( (\Delta_{i,s})_p - (\Delta_{i,s})_q \right)^2
        - \sum_{\substack{i,j=1 \\ i \neq j} }^n \Delta^{k,\ell}_i \Delta^{k,\ell}_j \left( (\Delta_{i,j})_p - (\Delta_{i,j})_q \right)^2. 
    \end{align*}
    Since for $s = i$ the summand is zero, otherwise
    the change in order of summation in the first term gives
    \[
        \sum_{\substack{i,s=1 \\ i \neq s}}^n (\Delta^{k,\ell}_i )^2  \left( (\Delta_{i,s})_p - (\Delta_{i,s})_q \right)^2
        = \sum_{\substack{i,s=1 \\ i \neq s}}^n \left( \frac{1}{2} (\Delta^{k,\ell}_i )^2 + \frac{1}{2} (\Delta^{k,\ell}_s )^2 \right) \left( (\Delta_{i,s})_p - (\Delta_{i,s})_q \right)^2.
    \]
    Renaming $s$ into $j$ leads to
    \begin{align*}
        \zb H_{(k,\ell), (p,q)} 
        & = \sum_{\substack{i,j=1 \\ i \neq j} }^n  \left( \frac{1}{2} (\Delta^{k,\ell}_i )^2 - \Delta^{k,\ell}_i \Delta^{k,\ell}_j  + \frac{1}{2} (\Delta^{k,\ell}_j )^2 \right) \left( (\Delta_{i,j})_p - (\Delta_{i,j})_q \right)^2 \\
        & = \frac{1}{2} \sum_{\substack{i,j=1 \\ i \neq j} }^n (  \Delta^{k,\ell}_i - \Delta^{k,\ell}_j )^2 \left( (\Delta_{i,j})_p - (\Delta_{i,j})_q \right)^2 \\
        & = \frac{1}{2} \sum_{i,j=1}^n  ( \Delta^{k,\ell}_i - \Delta^{k,\ell}_j )^2 \left( (\Delta_{i,j})_p - (\Delta_{i,j})_q \right)^2
        > 0,
    \end{align*}
    where  at least one summand is nonzero by assumption on $X$, 
    Now the Perron-Frobenius theorem 
    states the existence of an eigenvalue $\lambda > 0$, 
    which is the unique largest eigenvalue of $\zb H$ 
    and the corresponding eigenvector $v \in \bb R^{m(m-1)/2}$ satisfies $v_{k,\ell} > 0$. 
    \\
    Since by construction $A$ is a graph Laplacian matrix, 
    its off-diagonal entries have to be nonpositive.
    Thus, we set
        \[
        A_{k,\ell} \coloneqq - v_{k,\ell}, \ A_{\ell, k} \coloneqq A_{k,\ell}
        \quad \text{and} \quad
        A_{k,k} \coloneqq - \sum_{\substack{t =1\\ t \neq k}} A_{k,t},
        \quad k \in [m], \ \ell \in [k-1]. 
    \]  
   Since $A$ is a graph Laplacian matrix, we know that 
    $A \in \overline{\mathbb A}$.
\end{proof}

Notably, it follows from the theorem that $M_A$ is a metric matrix, which may be of interest on its own.

\begin{corollary}\label{cor: Laplacian metric}
Let $X \in \R^{m \times n}$ fulfill \eqref{abc} and in addition 
$X_i - X_j \notin \spn\{ \1_m \}$ for all $i,j \in [n],\ i \neq j$. 
Let $\q F$ and $\q G$ be defined by \eqref{eq:graphl}, and let
$\lambda_{\q F \circ \q G} A = \q G(\q F(A)$, where
$\lambda_{\q F \circ \q G} >0$ is the largest eigenvalue of $\q G \circ \q F$.
Then the matrix  $M_A(\uX)$ is a distance matrix, 
i.e., $M_A(\uX) \in \bb D_n$.
\end{corollary}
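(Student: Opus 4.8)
The plan is to verify directly the four conditions defining a metric matrix in $\bb D_n$ for $M_A(\uX)$: symmetry, vanishing diagonal, strict positivity off the diagonal, and the triangle inequality. By Theorem~\ref{thm: existence graph Laplace} (whose hypothesis \eqref{abc} is assumed here), $A$ is a graph Laplacian and in particular $A \in \overline{\bb A} = \psd^m$, so it admits a Cholesky factorization $A = L^\tT L$ and
\[
M_A(X_i, X_j) = \sqrt{(X_i - X_j)^\tT A (X_i - X_j)} = \| L(X_i - X_j) \|_2 .
\]
From this representation, symmetry, $M_A(X_i,X_i) = 0$, nonnegativity, and the triangle inequality $M_A(X_i,X_s) = \| L(X_i - X_j) + L(X_j - X_s) \|_2 \le M_A(X_i,X_j) + M_A(X_j,X_s)$ all follow immediately. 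Hence the only property that actually uses the extra assumptions is the strict inequality $M_A(X_i,X_j) > 0$ for $i \neq j$, i.e. $X_i - X_j \notin \ker A$.

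To handle this, I would invoke the explicit form of $A$ from the proof of Theorem~\ref{thm: existence graph Laplace}: there $A$ is built from the Perron--Frobenius eigenvector $v$ of the entrywise positive matrix $\zb H$ (entrywise positivity being exactly where \eqref{abc} enters), which has strictly positive entries, so that $A_{k,\ell} = -v_{k,\ell} < 0$ for all $k \neq \ell$. Thus $\mathfrak G(A)$ is the complete graph on $[m]$ with strictly positive weights, in particular connected, and by the standard spectral fact recalled in Remark~\ref{rem:graph} this gives $\ker A = \spn\{\1_m\}$. (Here one uses that the largest eigenvalue of $\zb H$ is simple, so $A$ is determined up to a positive scalar, which changes $M_A$ only by a positive factor and hence is harmless.)

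Combining this with the added hypothesis $X_i - X_j \notin \spn\{\1_m\}$ for $i \neq j$ yields $X_i - X_j \notin \ker A$, so $M_A(X_i,X_j)^2 = (X_i - X_j)^\tT A (X_i - X_j) > 0$, and together with the first paragraph this shows $M_A(\uX) \in \bb D_n$. I do not expect a genuine obstacle: the proof essentially reduces to the observation that the Perron--Frobenius construction in Theorem~\ref{thm: existence graph Laplace} forces \emph{every} off-diagonal weight of $A$ to be strictly positive, so the associated graph is complete, hence connected, and its kernel is exactly $\spn\{\1_m\}$; once the kernel is pinned down, the extra spanning-set assumption does the rest.
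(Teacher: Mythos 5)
Your proof is correct and takes essentially the same route as the paper: both arguments reduce to observing that the Perron--Frobenius eigenvector of $\zb H$ is entrywise positive, so the off-diagonal entries of $A$ are strictly negative, the associated graph on $[m]$ vertices is complete (hence connected), $\ker A = \spn\{\1_m\}$ by the spectral fact in Remark~\ref{rem:graph}, and the extra assumption $X_i - X_j \notin \spn\{\1_m\}$ then rules out any zero off-diagonal entry of $M_A(\uX)$. The only cosmetic differences are that you spell out the pseudo-metric properties via a Cholesky factor $L$ (the paper just asserts $M_A$ is a pseudo-metric for $A \in \psd^m$), and you read off $A_{k,\ell} < 0$ directly from the construction in Theorem~\ref{thm: existence graph Laplace}, whereas the paper phrases the same positivity via the weight matrix $\q W_B(\overline X) = -\lambda_{\q G\circ\q F}\,A$ of the graph whose Laplacian is $\q G(B)$; these are the same graph up to the harmless scalar $\lambda_{\q G\circ\q F}>0$, which you also correctly note does not affect metric-ness.
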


\begin{proof}
The matrix $M_A(\uX)$, $A \in \overline{\mathbb A}$, is only guaranteed to be a pseudo-metric.
    Setting $B \coloneqq \q F(A)$, 
    we have $\lambda_{\q G \circ \q F} A = \q G(B)$ and further by \eqref{eq:graphl} it holds
    $\q W_{B, k,\ell}(\overline X) = -  A_{k,\ell} > 0$
    for $k,\ell \in [m]$, $k \neq \ell$.
    Then Remark \ref{rem:graph} gives that the underlying graph $\mathfrak G(B)$ is connected and $A$ has zero as eigenvalue of multiplicity $1$ with corresponding eigenvector $\1_m$. Thus, $0 = M_A^2(X_i, X_j) = (X_i - X_j)^T A (X_i - X_j)$ if and only if $X_i - X_j \in \spn(\1_m)$. By assumption, this case is excluded and thus for all $i,j \in [n]$, $i \neq j$, $M_A(X_i, X_j) > 0$, i.e., $M_A$ is a metric matrix. 	
\end{proof}

Finally, we give a remark on the assumptions 
on the data $X$.

\begin{remark}
The assumptions \eqref{abc} in Theorem \ref{thm: existence graph Laplace} and $X_i - X_j \notin \spn\{ \1_m \}$ from Corollary \ref{cor: Laplacian metric} are satisfied if, for instance, the entries of $X$ are sampled independently
from an absolutely continuous probability measure.  
More generally, the assumptions are satisfied for all generic $X$,
meaning that there exists a polynomial $P$ such than $P(X) = 0$ if and only if the assumptions are not satisfied. To see this, we set $P \coloneqq P_1 + P_2$, where both are nonnegative polynomials describing the first and the second assumption,
\begin{align*}
    P_1(X) 
    & \coloneqq 
    \prod_{\substack{k,\ell = 1\\ k \neq \ell}}^m \prod_{\substack{p,q = 1\\ p \neq q}}^m \left[ \sum_{i,j =1}^n \left(  X_{i,k} - X_{i,\ell} - X_{j,k} + X_{j,\ell} \right)^2 \left( X_{i,p} - X_{i,q} - X_{j,p} + X_{j,q} \right)^2 \right], \\
    P_2(X) 
    & \coloneqq 
    \prod_{\substack{i,j=1 \\ i \neq j}}^n \left[ \sum_{k,\ell=1}^m (X_{i,k} - X_{j,k} - X_{i,\ell} + X_{j,\ell})^2 \right]. 
\end{align*}
\end{remark}

\section{Numerical Results} \label{sec:numerics}

In this section, we numerically explore eigenvalue methods discussed in the previous sections. First, we study their performance on synthetic data of translated histograms, see Section \ref{sec: synthtic_data}. 
Secondly, we explore their performance on the single-cell RNA sequencing (scRNA-seq) dataset from \cite{stegle2015sccellseq} in Section \ref{sec: cell_data}.
The code for all experiments is available\footnote{\url{https://github.com/JJEWBresch/unsupervised_ground_metric_learning}}.

\subsection{Translated Histograms}    \label{sec: synthtic_data}

\paragraph{Dataset.}

In the first experiment, we validate the proposed methods on three synthetic datasets $\{X^{(k)}\}_{k = 1}^3 \subseteq \mathbb R^{n\times m}$ similarly to \cite{huizing2022unsupervised}.
Here $n \in \mathbb N$ is the number of samples and $m \in \mathbb N$ is the number of features.
The datasets $X^{(k)}$ are generated by circulant-like sampling 
$$
    X^{(k)}_{i,j} \coloneqq h_k(\tfrac{i}{n} - \tfrac{j}{m}),
    \quad 
    (i,j) \in [n] \times [m], \ k \in \{1,2,3\},
$$ 
of  periodic functions $h_k$ on the one-dimensional torus $[0,1]$ given by
$$
    h_1(x) \;\propto\; e^{-\tfrac{10^4 |x|^2}{25}},
    \quad
    h_2(x) \;\propto\; h_1(x) + \tfrac12 h_1(x+\tfrac12),
    \quad 
    h_3(x) \;\propto\; h_1(x) + \tfrac12 h_1(x+\tfrac13).
$$
The parameters are chosen as $n = 100$ and $m = 80$. Since the samples $X^{(k)}_{i}$ are obtained by shifting of $X^{(k)}_{1}$, we expect the resulting learned distances to be small whenever the peaks of $h_1$ are aligned and large otherwise.

\paragraph{Algorithms.}
For these datasets, we compute the following eigenvectors: 
\begin{itemize}
    \item Wasserstein eigenvectors (WEV) corresponding to the mappings \eqref{eq:kernelregularized_w} with $R_n = \tau \underline R$, and $R_m = \tau \overline R$, where $\tau = 10^{-2}$, $\underline R_{i,j} = \| \uX_i^{(k)} - \uX^{(k)}_j\|_1$ and $\overline R_{p,\ell} = \| (\oX^{(k)})^{p} - (\oX^{(k)})^{\ell}\|_1$ with $\uX$ and $\oX$ defined in \eqref{normalization} with $\| \underline{R} \|_\infty = 1.9665$ and $\| \overline{R} \|_\infty = 2.4575$.
     
    \item Sinkhorn eigenvectors (SEV) corresponding to the mappings \eqref{eq: sinkhorn regularized} with $\varepsilon = 5 \cdot 10^{-2}$ and the rest of the parameters are the same as for WEV;
    \item Mahalanobis eigenvectors with radial basis function kernel (RBF-MEV) corresponding to Example \ref{ex: rbf} with transforms $\q F$ and $\q G$ using the same kernel parameters $\sigma_\q F = \sigma_\q G = 1$ and $\tau_\q F = \tau_\q G = 0.01$;
    \item Graph Laplacian Mahalanobis eigenvectors (GMEV) as in \eqref{eq:graphl};
    \item Euclidean distance (Eucl) as a baseline metric. 
\end{itemize}

We distinguish two variants for each of the algorithms. The first is a non-normalized with mapping $\q T$ as in \eqref{eq: operator Q} with $\alpha = 0.9$ and $\gamma_\q F = \gamma_\q G = 0.75$ for the WEV and SEV and the RBF-MSEV and GMEV with $\alpha = 0.9$ and $\gamma_\q F = \gamma_\q G = 0.01$. The second is a normalized iteration corresponding to the mapping $\tilde{\q T}$ in \eqref{fix2}, in which case we add suffix "n" to the name, e.g., WEVn.

For this experiment, all computations are performed on an off-the-shelf MacBookPro 2020 with Intel Core i5 Chip (4‑Core CPU, 1.4~GHz) and 8~GB~RAM.

\begin{figure}[t!]
    \includegraphics[width = 1\textwidth]{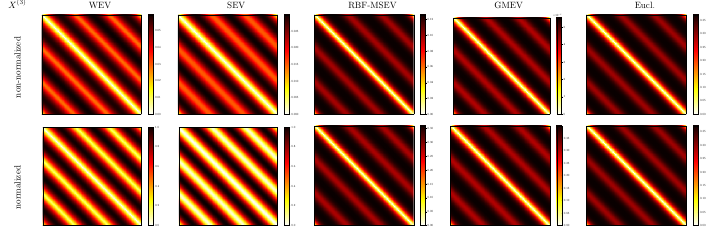}
    \caption{Comparison of the considered eigenvalue methods. Here, the resulting distances $\dist(X_i, X_j)$ are depicted for the dataset $X^{(3)}$.}
    \label{fig: metrics_synthetic_data_norm_heat}
\end{figure}

\begin{figure}[b!]
    \includegraphics[width = 1\textwidth]{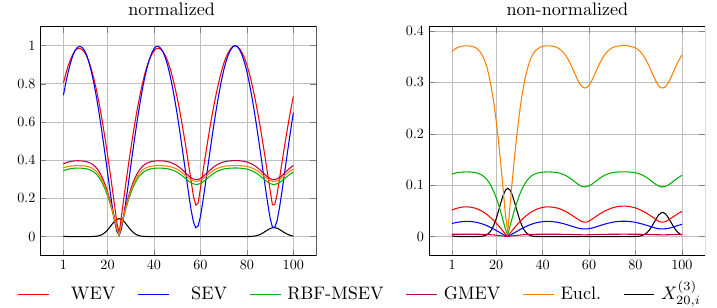}
    \caption{
    Distance $\dist(X^{(3)}_{20}, X^{(3)}_i)$ for the normalized (left) and non-normalized (right) eigenvector computation schemes. 
    \label{fig: metrics_synthetic_data_norm_comp}}
\end{figure}

The resulting learned distances $\dist(X_i^{(3)}, X_j^{(3)})$ are depicted in Figure \ref{fig: metrics_synthetic_data_norm_heat} and slice of the distance for 20th sample, i.e., $\dist(X_{20}^{(3)}, X^{(3)}_i)$, is shown in Figure \ref{fig: metrics_synthetic_data_norm_comp}. We observe that learned distances exhibit a circulant-like structure similar to that of the dataset. Besides the small values close to the main diagonal, we observe two off-diagonals corresponding to shifts by $\pm \tfrac{1}{3}$ and alignment of the peaks in $h_3$. This is more pronounced for optimal transport-based methods and, in particular, for WEVn and SEVn, while learned Mahalanobis distances (RBF-MEV, GMEV) are similar to the Euclidean distance.

The established theoretical results are validated by Figure \ref{fig: conv_synthetic_data_norm} depicting the $\ell_\infty$-residual $\|A^t - A^{t+1}\|_\infty$ and Hilbert metric on the manifold $\R^{m \times m}_{>0}$. Since, in our case, diagonals are always zero, we exclude the diagonals, yielding
\begin{equation}\label{eq: Hilbert metric}
    d_\q H(A^t, A^{t+1}) 
    \coloneqq 
    \max_{\substack{k,\ell \in [m] \\ k \neq \ell}} \log(A^t_{k,\ell} / A^{t+1}_{k,\ell}) 
    - \min_{\substack{k,\ell \in [m] \\ k \neq \ell}} \log(A^t_{k,\ell} / A^{t+1}_{k,\ell}).
\end{equation}

\begin{figure}[t!]
    \includegraphics[width = 1\textwidth]{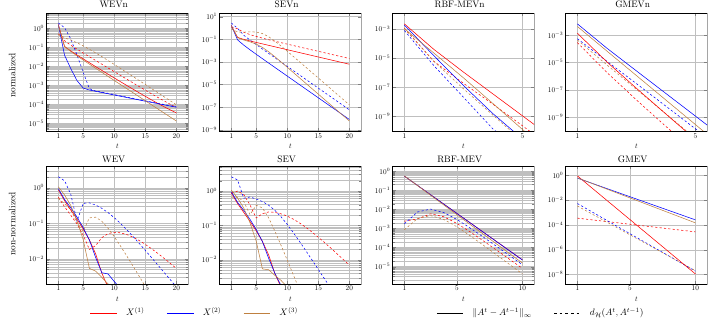}
    \caption{$\ell_\infty$-residual and Hilbert norm progression with iterations for the metric learning methods from Fig.~\ref{fig: conv_synthetic_data_norm} for the different synthetic data sets $\{X^{(i)}\}_{i = 1}^3$.}
    \label{fig: conv_synthetic_data_norm} 
\end{figure}

The computation time for the WEV (in both cases) is within 4 min., whereas the computation for the SEV (in both cases) is within 1 min, highlighting the computational benefit of entropic regularization. Its smoothing effect is also observed in Figure \ref{fig: metrics_synthetic_data_norm_comp}, where the learned metric for SEV has less sharp transitions than the metric obtained by WEV.
Mahalanobis distances (RBF-MEV, GMEV) are computed within 25 sec.

\subsection{Clustering Single-Cell Dataset} \label{sec: cell_data}

\paragraph{Dataset.}

For the next trials, we use a preprocessed scRNA-seq dataset \cite{wolf2018cell} containing 2043 cells and 1030 genes. For a detailed description of preprocessing, we refer to \cite[Appx. H]{huizing2022unsupervised}. Each cell belongs to exactly one of the following classes depending on its biological function: ``B cell'' (B), ``Natural Killer'' (NK), ``CD4+ T cell'' (CD4 T), ``CD8+ T cell'' (CD8 T), ``Dendritic cell'' (DC) and ``Monocyte'' (Mono). In addition, canonical marker genes expressed in certain cell types are annotated according to Azimuth\footnote{\url{https://azimuth.hubmapconsortium.org/references/}} \cite{hao2021integrated}.

We also work with the second version of the dataset obtained by the PCA reduction with 10 principal components for genes. After PCA, entries may be negative, and we additionally apply the exponential transform $X_{i,k} \mapsto \exp(X_{i,k})$. As a result, we obtain $X \in \R^{2043 \times 10}_{>0}$ that we refer to as the reduced dataset. Note that using PCA reduction for clustering is a standard technique \cite{zha2001spectral}, and we can see learning a metric for the reduced dataset as its enhancement.


\paragraph{Algorithms.}

We continue to work with the eigenvector algorithms for the synthetic dataset, except for WEV due to its computational complexity. The parameters in SEV and SEVn are chosen as $\varepsilon = 10^{-1}$, $R_n = \tau \underline R$ and $R_m = \tau \overline R$ with $\tau = 10^{-3}$. The norm are $\| \underline{R} \|_\infty = 1.9912$ and $\| \overline{R} \|_\infty = 1.9612$ for reduced dataset and $\| \underline{R} \|_\infty = 1.9658$ and $\| \overline{R} \|_\infty = 1.9959$ for full dataset. In addition, we include two versions of stochastic SEV. The first is sSEVn that was proposed in \cite{huizing22git} and sSEV corresponding to Algorithm \ref{alg: rfi}. For a stochastic iteration, 10\% of entries are selected for update randomly without replacement. For non-stochastic and stochastic methods, we perform at most 15 and, respectively, 400 iterations. 

For the non-normalized approaches, we chose $\alpha = 0.9$, $\gamma = \gamma_{\q F} = \gamma_{\q G}$ and study two setting: theoretically justified $\gamma = 0.9$ and not covered $\gamma= 1.0$. 

Additionally, we include the RBF-MEVn with $\sigma_{\q F} = 10$ and $\sigma_{\q G} = 1$ and regularization parameter $\tau = 10^{-3}$. Furthermore, we also test normalized Laplacian kernel Mahalanobis eigenvector (Laplacian-MEVn) from Example \ref{ex: rbf} with the same parameters as RBF-MEVn. GMEVn approach and Euclidean distance are included for the comparison. 

All experiments were performed on a cluster with Xeon E5-2630v4 (CPU) and NVIDIA Tesla P100 (GPU).

\paragraph{Performance metrics.}

To evaluate the quality of learned metrics, we use them for clustering. With known labels for cells and genes, the following performance metrics are computed.

\textit{Average silhouette width (ASW).} For a sample $x_i$ from class $C_p$, the mean distance to the points in the same class $a_i$ and the mean distance to the closest class $b_i$ are given by
\[
a_i = \frac{1}{|C_p|-1} \sum_{j \in C_p} \dist(x_i, x_j)
\quad \text{and} \quad
b_i = \min_{q \neq p}\frac{1}{|C_q|} \sum_{j \in C_q} \dist(x_i, x_j).
\]
Then, ASW is obtained by
\[
\mathrm{ASW} = \frac{1}{n} \sum_{i \in [n]} \frac{b_i - a_i}{\max\{a_i,b_i\}}.
\]
It takes values in $[-1, 1]$ with larger ASW indicating better cluster separation. 

\textit{Dunn index (DI).} 
It is the ratio of the minimal distance between the points in different classes to the maximal distance within the classes. The exact formula is given by
\[
\textrm{DI} = \frac{d_{min}}{d_{max}},
\quad \text{with} \quad 
d_{min} = \min_{p \neq q} \min_{i \in C_p, \, j \in C_q} \dist(x_i, x_j)
\ \text{and} \
d_{max} = \max_{p} \max_{i,j \in C_p} \dist(x_i,x_j),
\]
so that larger values indicate better clustering.

\textit{t-Distributed Stochastic Neighbor Embedding
 (t-SNE) \cite{maaten2008tsne}.} 
For the visualization of learned clusters via the computed distance matrices, we use t-SNE projections of the dataset in a two-dimensional space with random initialization.
For reduced dataset, we use higher regularization to prevent curve-like embeddings.

\begin{table}[b!]
    \resizebox{\linewidth}{!}{
    \begin{tabular}{l @{\qquad}  l l l l @{\qquad} l l l l @{\qquad}  l l}
        \toprule
        \multirow{2}{*}{Method} & \multicolumn{4}{l}{\!\!\!Reduced (cell)} & \multicolumn{4}{l}{\!\!\!Full (cell)} & \multicolumn{2}{l}{\!\!\!Full (gene)} \\
        & ASW$\uparrow$ & DI$\uparrow$ & time, m & iter. & ASW$\uparrow$ & DI$\uparrow$ & time, m & iter. 
        & ASW$\uparrow$ & DI$\uparrow$ \\
        \midrule
        SEVn
        & 0.7521 & 0.0006 & 57 & 15 
        & 0.3507 & 0.0487 & 269 & 15
        & 0.1659 & 0.0024 \\
        sSEVn \cite{huizing2022unsupervised}
        & 0.7328 & 0.0006 & 299 & 400 
        & \textbf{0.3894} & 0.0525 & 1440 & 400
        & \textbf{0.2281} & 0.0030 \\
        \midrule
        SEV with $\gamma = 0.9$
        & 0.7200 & $\mathbf{0.0079}$ & 37 & 15 
        & 0.0666 & 0.4574 & 175 & 15
        & 0.1881 & \textbf{0.1880} \\
        sSEV with $\gamma = 0.9$
        & 0.7142 & 0.0024 & 11 & 54 
        & 0.0989 & 0.0000 & 711 & 400 
        & 0.0523 & 0.0003 \\
        SEV with $\gamma = 1.0$
        & $\mathbf{0.7698}$ & 0.0044 & 81 & 15 
        & 0.0666 & 0.4342 & 345 & 15 
        & 0.1762 & 0.1244 \\
        sSEV with $\gamma = 1.0$
        & 0.7696 & 0.0009 & 164 & 400 
        & 0.0667 & 0.0000 & 967 & 400 
        & 0.0441 & 0.0001 \\
        SEV with $\tau = 10^{-5}$
        & --- & --- & --- & --- 
        & 0.3552 & 0.0574 & 12 & 4  
        & 0.1881 & 0.0684 \\
        SEV with adapt. $\gamma_{\q F} = 1.11, \gamma_{\q G} = 13.71$
        & --- & --- & --- & --- 
        & 0.3541 & 0.0436 & 44 & 15  
        & 0.1613 & 0.0023 \\
        \midrule
        RBF-MEVn
        & 0.2291 & 0.0001 & 3 & 3 
        & 0.0863 & 0.1277 & 202 & 3
        & -0.0095 & 0.0644 \\
        Laplacian-MEVn
        & 0.2231 & 0.0001 & 3 & 3
        & 0.0802 & \textbf{0.5443} & 128 & 3
        & 0.0033 & 0.1393 \\
        GMEVn
        & 0.2207 & 0.0007 & 5 & 3 
        & 0.2102 & 0.2187 & 62 & 10
        & 0.0205 & 0.1033\\
        \midrule
        Eucl.
        & 0.2154 & 0.0015 & 1 & ---
        & 0.0732 & 0.4333 & 4 & ---
        & 0.0033 & 0.0790\\
        \bottomrule
    \end{tabular}}
    \caption{Comparison of clustering of cells and genes with learned metrics on the PCA-reduced and the full scRNAseq datasets.}
    \label{tab: red_cell}
\end{table}

\begin{figure}[t!]
    \includegraphics[width = 1\textwidth]{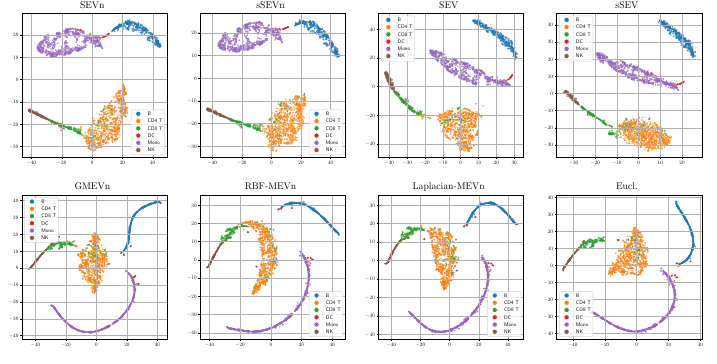}
    \caption{t-SNE plots of the resulting cell clustering 
    using learned metrics on the PCA-reduced scRNAseq dataset.}
    \label{fig: red_cell_cluster}
\end{figure}

\begin{figure}[b!]
    \includegraphics[width = 1\textwidth]{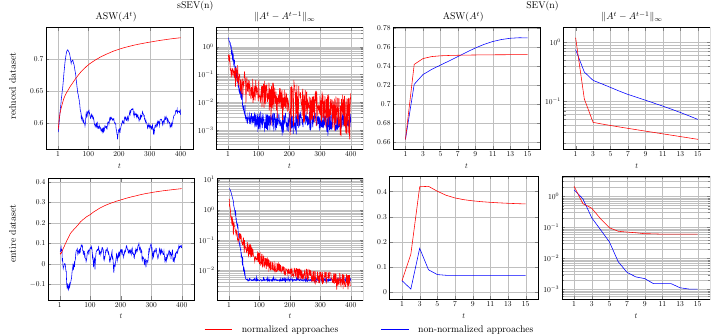}
    \caption{Evolution of ASW and the $\ell_\infty$-residual with iteration for optimal transport methods for PCA-reduced (top) and full (bottom) scRNAseq datasets.}
    \label{fig: red_cell_conv}
\end{figure}

\paragraph{Comparison of proposed methods}

The summary of clustering performance for the different methods is presented in Table~\ref {tab: red_cell}. We observe that the performance of RBF-MEVn and Laplacian-MEVn is close to the Euclidean distance, performing marginally better in ASW and requiring longer runtime. 
We also performed (unreported) grid search for the kernel parameters $\sigma_\q F$ and $\sigma_\q G$, which had little impact on the clustering statistics.

The performance of GMEVn is similar to kernel methods on the reduced dataset. We also observe this on t-SNE plots shown in Figure \ref{fig: red_cell_cluster}.
For the full dataset, GMEVn exhibits the best ASW for cell clustering among Mahalanobis learned distances.

\begin{figure}[t!] 
   \includegraphics[width = 1\textwidth]{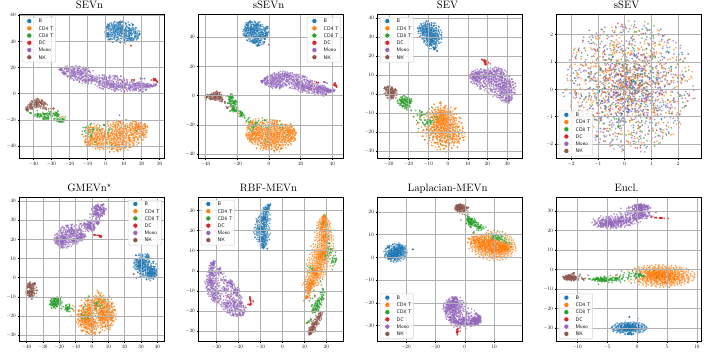}
   \caption{t-SNE visualization of cell clusters with learned distances on the full scRNAseq dataset.}
   \label{fig:tsne_vis_entire_dataset}
\end{figure}

\begin{figure}[b!] 
   \includegraphics[width = 1\textwidth]{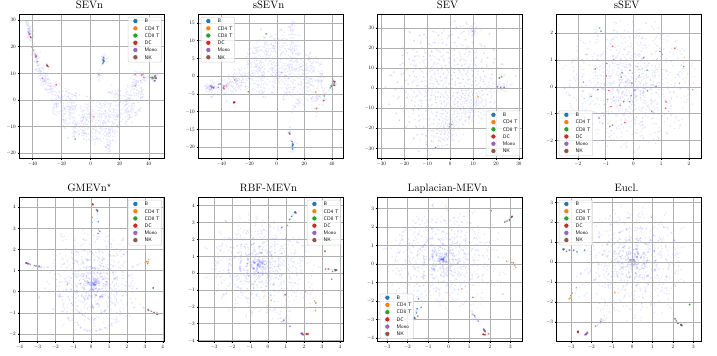}
   \caption{t-SNE visualization of the marker gene clusters using learned distances on the full scRNAseq dataset.}
   \label{fig:tsne_vis_entire_dataset_gene}
\end{figure}

Turning to OT-based distances, we again observe a dichotomy between the performance on the reduced and full datasets. On the reduced dataset, all Sinkhorn-based methods attain ASW close to 0.75. The t-SNE plots, shown in Figure \ref{fig: red_cell_cluster}, indicate a slight difference between normalized and non-normalized methods. In these plots, cell clusters are clearly visible, with difficulties in separating CD8 T cells from CD4 T and NK cells and DC cells from Mono and B cells. Time-wise, SEV with $\gamma= 0.9$ and its stochastic version are the fastest. Note that we stopped sSEV after 54 iterations at the peak ASW shown in Figure \ref{fig: red_cell_conv}.

For the full dataset, normalized methods still exhibit high ASW for both cell and gene clustering. On the other hand, non-normalized approaches no longer result in high ASW for cell clustering. On the contrary, they lead to higher DI for both cell and gene clustering. In contrast to sSEVn, sSEV does not provide informative clustering as can be seen in Figures \ref{fig:tsne_vis_entire_dataset} and \ref{fig:tsne_vis_entire_dataset_gene}. The rest of the methods cluster cells in the full dataset visually similar to the reduced dataset. There is still difficulty separating CD8 T cells from CD4 T and NK cells, however, DC cells are now forming a distinct cluster. Since we also observe that marker genes cluster into groups in Figure \ref{fig:tsne_vis_entire_dataset_gene}, yet these groups cannot be separated from the rest of the unlabeled genes. 

\paragraph{Adaptive step size for SEV}

\begin{figure}[b!]
\begin{minipage}[t]{0.53\linewidth}
    \includegraphics[width=\linewidth, clip=true, trim=0pt 0pt 0pt 0pt]{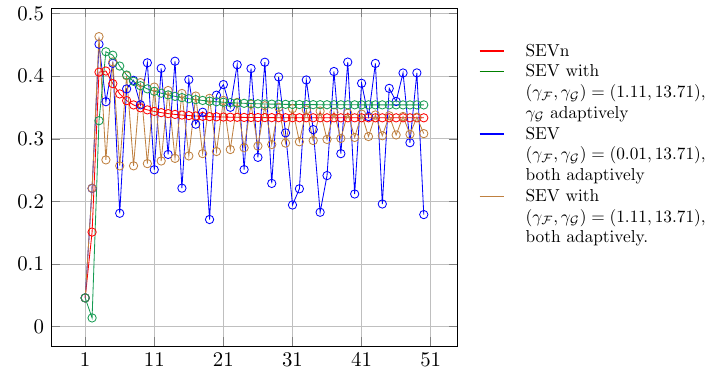}
    \caption{
    ASW development for SEVn and SEV
    for different choices of $\gamma_\q F$ and $\gamma_\q G$
    and adaptive updates according to the $\ell_\infty$-residual,
    where $\tau = 10^{-3}$ is constant.
    }
    \label{fig: adaptive_gammas}
\end{minipage}%
\hfill
\begin{minipage}[t]{0.45\linewidth}
    \includegraphics[width=\linewidth, clip=true, trim=0pt 0pt 0pt 0pt]{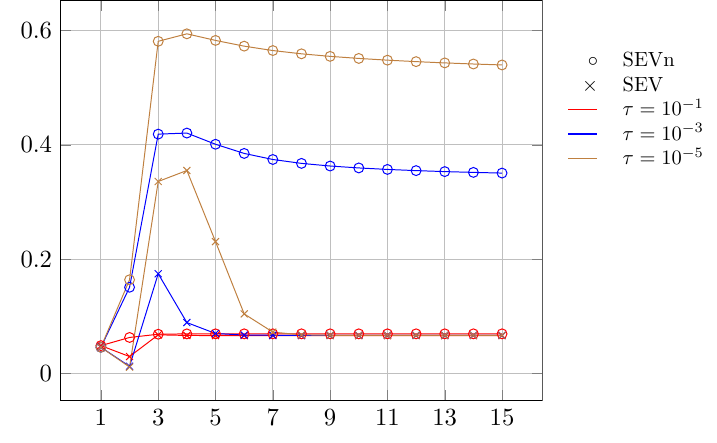}
    \caption{
    ASW development for SEVn and SEV
    for different choices of $\tau > 0$,
    and $\gamma = 0.9$ is constant.
    }
    \label{fig: ASW_sinkhorn_early_stop}
    \vfill
\end{minipage}
\end{figure}

Investigating convergence in Figure \ref{fig: red_cell_conv}, we observe that despite a linear convergence rate in residual, ASW for sSEV varies around 0.05, while for sSEVn it slowly increases over time. The runtime, however, is much longer than for non-stochastic SEVn, showing analogous performance. Although SEVn performance is the best among the methods, we observe that it does not converge, and the residual $\|A^t - A^{t+1}\|_\infty$ stagnates at $10^{-1}$. It is linked to the fact that $\|R_n\|_\infty, \|R_m\|_\infty \approx 2 \tau = 2 \cdot 10^{-3}$ and conditions in Theorem \ref{cor: wasserstein} do not apply. This indicates that the better performance of SEVn compared to SEV could be due to theoretical restrictions on the choice of $\gamma$ for the latter. 

We explore this idea by considering a version of SEV with $\gamma_\q F = \| \q F(A) \|_\infty^{-1} = 1.11$ and $\gamma_\q G = \| \q G(B) \|_\infty^{-1} = 13.71$, where $(A,B)$ is the outcome of SEVn. With these $\gamma_\q F, \gamma_\q G$, SEV diverge and for this reason, we included an adaptive change of $\gamma_\q F$ and $\gamma_\q G$ based on the evolution of $\|B^{t} - \q F (A^t)\|_\infty$ and, respectively, of $\|A^{t} - \q G (B^t)\|_\infty$. More precisely, after each iteration, we rescale $\gamma_{\q F}$ by $\|B^{t} - \q F (A^t)\|_\infty^{-1}$ and $\gamma_{\q G}$ by $\|A^{t} - \q G (B^t)\|_\infty^{-1}$ and use them for the next iteration of Algorithm \ref{alg:fix}.
Figure \ref{fig: adaptive_gammas} shows ASW progression for the adaptive strategies, which matches ASW of SEVn, indicating that performance in this case depends on the initial choice of $\gamma_\q F$ and $\gamma_\q G$. The t-SNE plots for SEV with adaptive step size can be seen in Figure \ref{fig: tSNE_sinkhorn_early_stop}. 
The runtime of SEV with adaptive strategy, reported in Table \ref{tab: red_cell}, is 6 times faster than SEVn.

\paragraph{Impact of $\tau$ and early stopping of SEV}

\begin{figure}[t!]
    \includegraphics[width=\linewidth, clip=true, trim=0pt 0pt 0pt 0pt]{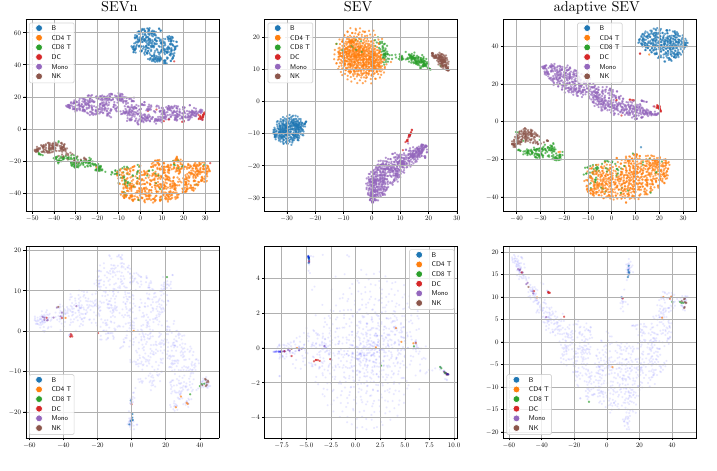}
   \caption{t-SNE visualization of the resulting clustering of  
   cells (top) and marker genes (bottom) using learned distances on the full scRNAseq dataset. Left: SEVn with $\tau = 10^{-5}$, center: early stopped SEV with $\tau = 10^{-5}$, right: SEV with adaptive step size and $\tau = 10^{-3}$.}
   \label{fig: tSNE_sinkhorn_early_stop}
\end{figure}

Due to constraints of Theorem \ref{cor: wasserstein} not being satisfied for SEVn, we explored the performance of SEV and SEVn for different values of $\tau$. The change of ASW with iterations is shown in Figure \ref{fig: ASW_sinkhorn_early_stop}. Both methods attain higher ASW for $\tau = 10^{-5}$ 
and for SEV, we observe that ASW initially increases and then vanishes. A similar, but less drastic behavior is also visible for SEVn. 

These observations motivated us to consider SEV with $\tau = 10^{-5}$ stopped at the peak ASW. 
Since the residual is still large for early stopped SEV, this procedure can be seen as a regularization only giving $A \approx \gamma_\q G\q G(B)$ and $B \approx \gamma_\q F\q F(A)$, i.e., allowing for error in order to counteract noise in the data. This method is also included in Table \ref{tab: red_cell} and respective t-SNE plots for SEVn and early stopped SEV with $\tau = 10^{-5}$ can be found in Figure \ref{fig: tSNE_sinkhorn_early_stop}.

\section{Conclusions}\label{sec:conclusions}
We considered two fixed-point problems
\begin{equation} \label{fix2c}
    B =  \frac{ \q F(A)}{\|\q F(A)\|_\infty } 
    \quad \text{and} \quad 
    A =  \frac{\q G(B)}{\|\q G(B)\|_\infty }. 
\end{equation}
and
\begin{equation} \label{fix1_c}
    B = \gamma_\q F \q F(A) 
    \quad \text{and} \quad 
    A = \gamma_\q G  \q G(B).
\end{equation}
For the first problem, we used Algorithm \ref{alg:fix2} and, for the second, we established Algorithm \ref{alg:fix} and the stochastic Algorithm \ref{alg: rfi}. We established general linear convergence guarantees of the proposed algorithms and more detailed results for 
\begin{enumerate}
\item (regularized) optimal-transport-based mappings $\q F$ and $\q G$ in Section \ref{sec: wasserstein};
\item kernel-based Mahalanobis distances in Section \ref{sec: kernel};
\item Graph Laplacian-based Mahalanobis distance in Section \ref{sec: laplacian}.
\end{enumerate}
Most of our results should hold for any vector norm $\|\cdot\|$ and the specific choice of the $\ell_\infty$-norm was motivated by the entrywise definition of $\q F$ and $\q G$ combined with the Lipschitz continuity of the respective distances discussed in Sections \ref{sec: wasserstein} and \ref{sec: kernel}.   

Our numerical trials indicate that optimal transport-based distances have strong clustering potential, especially combined with the PCA reduction of the data. Graph Laplacian-based Mahalanobis distance also shows promise for larger datasets. On the other hand, kernel-based distances improve upon Euclidean distance only marginally.  

In the numerical trials, we observed that the equalities in \eqref{fix2c} and \eqref{fix1_c} as hard constraints may cause algorithms to underperform. In the future, we plan to explore alternative relaxed constraints, e.g., via unbalanced optimal transport \cite{chizat2018unbalanced}. Furthermore, there is a definite space for improvement of the stochastic Algorithm \ref{alg: rfi}.

We also observed that Algorithm \ref{alg:fix2} converges in scenarios beyond those covered by theoretical results. It remains an open problem to justify its performance. We believe that this requires a different distance from $\| \cdot \|_{\infty}$, possibly induced by a manifold of chosen parametrization.

\subsection*{Acknowledgments}
The authors are grateful to Geert-Jan Huitzing for providing the preprocessed dataset for numerical trials.


\printbibliography

@book{athreya06measure,
author = {Krishna B. Athreya, Soumendra N. Lahiri},
title = {Measure Theory and Probability Theory},
publisher = {Springer New York, NY},
year = {2006},
doi = {10.1007/978-0-387-35434-7},
address = {New York, NY},
edition   = {1},
}

@article{hermer2019random,
  title={Random function iterations for consistent stochastic feasibility},
  author={Hermer, Neal and Luke, D Russell and Sturm, Anja},
  journal={Numerical Functional Analysis and Optimization},
  volume={40},
  number={4},
  pages={386--420},
  year={2019},
  publisher={Taylor \& Francis},
  doi={10.1080/01630563.2018.1535507},
}

@unpublished{hermer2020random,
  title={Random function iterations for stochastic fixed point problems},
  author={Hermer, Neal and Luke, D. Russell and Sturm, Anja},
  year={2020},
  doi={10.48550/arXiv.2007.06479},
}

@Article{OTCoherentSet2021,
  author  = {Koltai, Peter and von Lindheim, Johannes and Neumayer, Sebastian and Steidl, Gabriele},
  journal = {Physica D},
  title   = {Transfer operators from optimal transport plans for coherent set detection},
  year    = {2021},
  pages   = {132980},
  volume  = {426},
  doi     = {10.1016/j.physd.2021.132980},
}

@book{W2004,
  title={Scattered Data Approximation},
  author={Holger Wendland},
  series={Cambridge Monographs on Applied and Computational Mathematics},
  publisher={Cambridge University Press},
  year={2005},
  hideisbn={9780511617539},
  doi={10.1017/CBO9780511617539},
}

@article{Schoenberg1946,
author = {Schoenberg, I.J},
title = {Contributions to the problem of approximation of equidistant data by analytic functions},
journal = {Quarterly of Applied Mathematics},
sjournal = {Quart. Appl. Math.},
volume = {4},
year = {1946},
pages = {45-99}
}

@article{Micchelli1986,
author     = {C. A. Micchelli}, 
title      = {Interpolation of scattered data: distance matrices and conditionally
positive definite functions},
journal    = {Constructive Approximation},
volume = {2},
pages = {11--22}, 
year = {1986},
doi={10.1007/BF01893414},
}

@article{stuart2020inverse,
  title   = {Inverse Optimal Transport},
  author  = {Stuart, Andrew M. and Wolfram, Marie‐Therese},
  journal = {SIAM Journal on Applied Mathematics},
  volume  = {80},
  number  = {1},
  pages   = {257--279},
  year    = {2020},
  doi     = {10.1137/19M1261122}
}

@InProceedings{huizing2022unsupervised,
  title = 	 {Unsupervised Ground Metric Learning Using {W}asserstein Singular Vectors},
  author =       {Huizing, Geert-Jan and Cantini, Laura and Peyr{\'e}, Gabriel},
  booktitle = 	 {Proceedings of the 39th International Conference on Machine Learning},
  pages = 	 {9429--9443},
  year = 	 {2022},
  editor = 	 {Chaudhuri, Kamalika and Jegelka, Stefanie and Song, Le and Szepesvari, Csaba and Niu, Gang and Sabato, Sivan},
  volume = 	 {162},
  series = 	 {Proceedings of Machine Learning Research},
  month = 	 {17--23 Jul},
  publisher =    {PMLR},
  pdf = 	 {https://proceedings.mlr.press/v162/huizing22a/huizing22a.pdf},
  url = 	 {https://proceedings.mlr.press/v162/huizing22a.html},
}

@article{makrodimitris2019metric,
    author = {Makrodimitris, Stavros and Reinders, Marcel J T and van Ham, Roeland C H J},
    title = {Metric learning on expression data for gene function prediction},
    journal = {Bioinformatics},
    volume = {36},
    number = {4},
    pages = {1182-1190},
    year = {2019},
    month = {09},
    issn = {1367-4803},
    doi = {10.1093/bioinformatics/btz731},
    hideurl = {https://doi.org/10.1093/bioinformatics/btz731},
    hideeprint = {https://academic.oup.com/bioinformatics/article-pdf/36/4/1182/48982814/bioinformatics\_36\_4\_1182\_s5.pdf},
}

@misc{huizing22git,
  author = {Huizing, Geert-Jan},
  title = {Python package \texttt{wsingular}},
  year = {2022},
  publisher = {GitHub},
  journal = {GitHub repository},
  howpublished = {\url{https://github.com/CSDUlm/wsingular}},
  note = {version:0.1.7}
}

@book{horn2012matrix,
  author    = {Roger A. Horn and Charles R. Johnson},
  title     = {Matrix Analysis},
  edition   = {2nd},
  publisher = {Cambridge University Press},
  year      = {2012},
  isbn      = {9780521386326},
}

@book{chung1997spectral,
  title     = {Spectral Graph Theory},
  author    = {Fan R. K. Chung},
  publisher = {American Mathematical Society},
  year      = {1997},
  series    = {CBMS Regional Conference Series in Mathematics},
  volume    = {92},
  isbn      = {978-0-8218-0315-8},
  doi       = {10.1090/cbms/092},
}

@inproceedings{dusterwald2025fast,
title={Fast Unsupervised Ground Metric Learning with Tree-Wasserstein Distance},
author={Kira Michaela D{\"u}sterwald and Samo Hromadka and Makoto Yamada},
booktitle={The Thirteenth International Conference on Learning Representations},
year={2025},
url={https://openreview.net/forum?id=FBhKUXK7od}
}

@article{cuturi2014ground,
  author  = {Marco Cuturi and David Avis},
  title   = {Ground Metric Learning},
  journal = {Journal of Machine Learning Research},
  year    = {2014},
  pages   = {533--564},
  url     = {http://jmlr.org/papers/v15/cuturi14a.html},
  hidedoi     = {10.5555/2627435.2627452},
  publisher = {JMLR}, 
  volume = {15}, 
  number = {1},
  numpages = {32},
}

@article{wolf2018cell,
author = {Wolf, F. and Angerer, Philipp and Theis, Fabian},
year = {2018},
month = {02},
pages = {},
title = {SCANPY: Large-scale single-cell gene expression data analysis},
volume = {19},
journal = {Genome Biology},
doi = {10.1186/s13059-017-1382-0}
}

@InProceedings{chen2020simple,
  title = 	 {A Simple Framework for Contrastive Learning of Visual Representations},
  author =       {Chen, Ting and Kornblith, Simon and Norouzi, Mohammad and Hinton, Geoffrey},
  booktitle = 	 {Proceedings of the 37th International Conference on Machine Learning},
  pages = 	 {1597--1607},
  year = 	 {2020},
  editor = 	 {III, Hal Daumé and Singh, Aarti},
  volume = 	 {119},
  month = 	 {13--18 Jul},
  hidepdf = 	 {http://proceedings.mlr.press/v119/chen20j/chen20j.pdf},
  url = 	 {https://proceedings.mlr.press/v119/chen20j.html},
  hidedoi = {10.5555/3524938.3525087},
  publisher = {JMLR}, 
  articleno = {149}, 
  numpages = {11}, 
  series = {ICML'20}
}

@inproceedings{khodadadeh2019unsupervised,
 author = {Khodadadeh, Siavash and Boloni, Ladislau and Shah, Mubarak},
 booktitle = {Advances in Neural Information Processing Systems},
 editor = {H. Wallach and H. Larochelle and A. Beygelzimer and F. d\textquotesingle Alch\'{e}-Buc and E. Fox and R. Garnett},
 pages = {},
 publisher = {Curran Associates, Inc.},
 title = {Unsupervised Meta-Learning for Few-Shot Image Classification},
 url = {https://proceedings.neurips.cc/paper_files/paper/2019/file/fd0a5a5e367a0955d81278062ef37429-Paper.pdf},
 volume = {32},
 year = {2019}
}

@article{kaya2019deep,
  title={Deep Metric Learning: A Survey},
  author={Kaya, Mahmut and Bilge, Hasan {\c{S}}akir},
  journal={Symmetry},
  volume={11},
  number={9},
  pages={1066},
  year={2019},
  publisher={MDPI},
  DOI = {10.3390/sym11091066}
}

@unpublished{ghojogh2022spectral,
  title={Spectral, probabilistic, and deep metric learning: Tutorial and survey},
  author={Ghojogh, Benyamin and Ghodsi, Ali and Karray, Fakhri and Crowley, Mark},
  doi={10.48550/arXiv.2201.09267},
  year={2022}
}

@misc{vu2021deep,
title = {Deep Metric Learning: a (Long) Survey},
author = {Chan Ha Vu},
year = {2021},
URL = {https://hav4ik.github.io/articles/deep-metric-learning-survey}
}

@article{stegle2015sccellseq,
  title={Computational and analytical challenges in single-cell transcriptomics},
  author={Oliver Stegle and Sarah A. Teichmann and John C. Marioni},
  journal = {Nature Reviews Genetics},
  volume = {16},
  number = {3},
  pages = {133--145},
  year = {2015},
  doi = {10.1038/nrg3833},
}

@INPROCEEDINGS{hadsell2006dimensionality,
  author={Hadsell, R. and Chopra, S. and LeCun, Y.},
  booktitle={2006 IEEE Computer Society Conference on Computer Vision and Pattern Recognition (CVPR'06)}, 
  title={Dimensionality Reduction by Learning an Invariant Mapping}, 
  year={2006},
  volume={2},
  number={},
  pages={1735-1742},
  keywords={Extraterrestrial measurements;Image generation;Biology;Geoscience;Astronomy;Service robots;Manufacturing industries;Image analysis;Feature extraction;Data visualization},
  doi={10.1109/CVPR.2006.100}
}

@InProceedings{schroff2015facenet,
author = {Schroff, Florian and Kalenichenko, Dmitry and Philbin, James},
title = {FaceNet: A Unified Embedding for Face Recognition and Clustering},
booktitle = {Proceedings of the IEEE Conference on Computer Vision and Pattern Recognition (CVPR)},
month = {June},
year = {2015},
pages     = {815--823},
doi       = {10.1109/CVPR.2015.7298682},
}

@INPROCEEDINGS{liu2017sphereface,
  author={Liu, Weiyang and Wen, Yandong and Yu, Zhiding and Li, Ming and Raj, Bhiksha and Song, Le},
  booktitle={2017 IEEE Conference on Computer Vision and Pattern Recognition (CVPR)}, 
  title={SphereFace: Deep Hypersphere Embedding for Face Recognition}, 
  year={2017},
  volume={},
  number={},
  pages={6738-6746},
  keywords={Face;Training;Measurement;Face recognition;Manifolds;Testing;Feature extraction},
  doi={10.1109/CVPR.2017.713}
}

@INPROCEEDINGS{wang2018cosface,
  author={Wang, Hao and Wang, Yitong and Zhou, Zheng and Ji, Xing and Gong, Dihong and Zhou, Jingchao and Li, Zhifeng and Liu, Wei},
  booktitle={2018 IEEE/CVF Conference on Computer Vision and Pattern Recognition}, 
  title={CosFace: Large Margin Cosine Loss for Deep Face Recognition}, 
  year={2018},
  volume={},
  number={},
  pages={5265-5274},
  keywords={Face;Face recognition;Testing;Mars;Training;Feature extraction;Task analysis},
  doi={10.1109/CVPR.2018.00552}
}

@InProceedings{kusner2015from,
  title = 	 {From Word Embeddings To Document Distances},
  author = 	 {Kusner, Matt and Sun, Yu and Kolkin, Nicholas and Weinberger, Kilian},
  booktitle = 	 {Proceedings of the 32nd International Conference on Machine Learning},
  pages = 	 {957--966},
  year = 	 {2015},
  editor = 	 {Bach, Francis and Blei, David},
  volume = 	 {37},
  series = 	 {Proceedings of Machine Learning Research},
  address = 	 {Lille, France},
  month = 	 {07--09 Jul},
  publisher =    {PMLR},
  hidepdf = 	 {http://proceedings.mlr.press/v37/kusnerb15.pdf},
  url = 	 {https://proceedings.mlr.press/v37/kusnerb15.html},
  hidedoi = {10.5555/3045118.3045221}
}

@unpublished{bellazzi2021gene,
  title={The Gene Mover's Distance: Single-cell similarity via Optimal Transport},
  author={Bellazzi, Riccardo and Codegoni, Andrea and Gualandi, Stefano and Nicora, Giovanna and Vercesi, Eleonora},
  doi={10.48550/arXiv.2102.01218},
  year={2021}
}

@inproceedings{xing2002distance,
 author = {Xing, Eric and Jordan, Michael and Russell, Stuart J and Ng, Andrew},
 booktitle = {Advances in Neural Information Processing Systems},
 editor = {S. Becker and S. Thrun and K. Obermayer},
  pages={505--512},
 title = {Distance Metric Learning with Application to Clustering with Side-Information},
 volume = {15},
 year = {2002},
url={https://papers.nips.cc/paper/2164-distance-metric-learning-with-application-to-clustering-with-side-information}
}

@book{ghojogh2023elements,
  title     = {Elements of Dimensionality Reduction and Manifold Learning},
  author    = {Ghojogh, Benyamin and Crowley, Mark and Karray, Fakhri and Ghodsi, Ali},
  publisher = {Springer},
  address = {New York, NY},
  year      = {2023},
  hideisbn      = {978-3-031-10602-6},
  doi       = {10.1007/978-3-031-10602-6},
}

@inproceedings{davis2007information,
author = {Davis, Jason V. and Kulis, Brian and Jain, Prateek and Sra, Suvrit and Dhillon, Inderjit S.},
title = {Information-theoretic metric learning},
year = {2007},
hideisbn = {9781595937933},
publisher = {Association for Computing Machinery},
address = {New York, NY, USA},
hideurl = {https://doi.org/10.1145/1273496.1273523},
doi = {10.1145/1273496.1273523},
booktitle = {Proceedings of the 24th International Conference on Machine Learning},
pages = {209–216},
numpages = {8},
location = {Corvalis, Oregon, USA},
series = {ICML '07}
}

@article{weinberger2005distance,
  title={Distance Metric Learning for Large Margin Nearest Neighbor Classification},
  author={Weinberger, Kilian Q and Blitzer, John and Saul, Lawrence},
  journal={Advances in Neural Information Processing Systems},
  publisher = {MIT Press},
  volume={18},
  year={2005},
url = {https://proceedings.neurips.cc/paper_files/paper/2005/file/a7f592cef8b130a6967a90617db5681b-Paper.pdf},
}

@book{bellet2015metric,
  title     = {Metric Learning},
  author    = {Aurélien Bellet and Amaury Habrard and Marc Sebban},
  publisher = {Morgan \& Claypool Publishers},
  year      = {2015},
  series    = {Synthesis Lectures on Artificial Intelligence and Machine Learning},
  volume    = {9},
  doi       = {10.2200/S00626ED1V01Y201501AIM030},
  hideurl       = {https://doi.org/10.2200/S00626ED1V01Y201501AIM030}
}

@ARTICLE{liu2024hierarchical,
  author={Liu, Shenglan and Yu, Yang and Liu, Kaiyuan and Wang, Feilong and Wen, Wujun and Qiao, Hong},
  journal={IEEE Transactions on Neural Networks and Learning Systems}, 
  title={Hierarchical Neighbors Embedding}, 
  year={2024},
  volume={35},
  number={6},
  pages={7816-7829},
  keywords={Manifold learning;Manifolds;Task analysis;Principal component analysis;Linearity;Technological innovation;Splines (mathematics);Data sparsity;hierarchical neighbors;manifold learning;topological and geometrical properties},
  doi={10.1109/TNNLS.2022.3221103}
}

@inproceedings{beier2025joint,
  title={Joint Metric Space Embedding by Unbalanced OT with {G}romov-{W}asserstein Marginal Penalization},
  author={Beier, Florian and Piening, Moritz and Beinert, Robert and Steidl, Gabriele},
  hidedoi={10.48550/arXiv.2502.07510},
  year={2025},
  booktitle={Forty-second International Conference on Machine Learning (ICML)},
  url={https://openreview.net/forum?id=0YZHfUmsJv}
}

@unpublished{lin2025coupled,
  title={Coupled Hierarchical Structure Learning using Tree-{W}asserstein Distance},
  author={Lin, Ya-Wei Eileen and Coifman, Ronald R and Mishne, Gal and Talmon, Ronen},
  doi={10.48550/arXiv.2501.03627},
  year={2025}
}

@inproceedings{dou2022optimal,
  title={An Optimal Transport Approach to Deep Metric Learning (Student Abstract)},
  author={Dou, Jason Xiaotian and Luo, Lei and Yang, Raymond Mingrui},
  booktitle={Proceedings of the AAAI Conference on Artificial Intelligence},
  volume={36},
  number={11},
  pages={12935--12936},
  year={2022},
  doi       = {10.1609/aaai.v36i11.21604},
}

@inproceedings{wang2012supervised,
  title={Supervised Earth Mover’s Distance Learning and Its Computer Vision Applications},
  author={Wang, Fan and Guibas, Leonidas J},
  booktitle={Computer Vision--ECCV 2012: 12th European Conference on Computer Vision, Florence, Italy, October 7-13, 2012, Proceedings, Part I 12},
  pages={442--455},
  year={2012},
  publisher={Springer},
  doi={10.1007/978-3-642-33718-5_32},
}

@inproceedings{xu2018multi,
  title={Multi-Level Metric Learning via Smoothed Wasserstein Distance},
  author={Jie Xu and Lei Luo and Cheng Deng and Heng Huang},
  booktitle={Proceedings of the Twenty-Seventh International Joint Conference on Artificial Intelligence (IJCAI)},
  pages={2919--2925},
  year={2018},
  doi={10.24963/ijcai.2018/405},
}

@inproceedings{kerdoncuff2020metric,
  title={Metric Learning in Optimal Transport for Domain Adaptation},
  author={Kerdoncuff, Tanguy and Emonet, R{\'e}mi and Sebban, Marc},
  booktitle={International Joint Conference on Artificial Intelligence},
  pages={2162--2168},
  year={2020},
  organization={IJCAI},
  doi       = {10.24963/ijcai.2020/299},
}

@article{heitz2021ground,
  title={Ground metric learning on graphs},
  author={Heitz, Matthieu and Bonneel, Nicolas and Coeurjolly, David and Cuturi, Marco and Peyr{\'e}, Gabriel},
  journal={Journal of Mathematical Imaging and Vision},
  volume={63},
  pages={89--107},
  year={2021},
  publisher={Springer},
  doi={10.1007/s10851-020-00996-z},
}

@inproceedings{huang2016supervised,
 author = {Huang, Gao and Guo, Chuan and Kusner, Matt J and Sun, Yu and Sha, Fei and Weinberger, Kilian Q},
 booktitle = {Advances in Neural Information Processing Systems},
 editor = {D. Lee and M. Sugiyama and U. Luxburg and I. Guyon and R. Garnett},
 pages = {},
 publisher = {Curran Associates, Inc.},
 title = {Supervised Word Mover\textquotesingle s Distance},
 url = {https://proceedings.neurips.cc/paper_files/paper/2016/file/10c66082c124f8afe3df4886f5e516e0-Paper.pdf},
 volume = {29},
 year = {2016}
}

@article{suarez2021tutorial,
  title={A tutorial on distance metric learning: Mathematical foundations, algorithms, experimental analysis, prospects and challenges},
  author={Su{\'a}rez, Juan Luis and Garc{\'{\i}}a, Salvador and Herrera, Francisco},
  journal={Neurocomputing},
  volume={425},
  pages={300--322},
  year={2021},
  publisher={Elsevier},
  doi={10.1016/j.neucom.2020.08.017},
}

@article{luerig2024bioencoder,
author = {L{\"u}rig, Moritz D. and Di Martino, Emanuela and Porto, Arthur},
title = {BioEncoder: A metric learning toolkit for comparative organismal biology},
journal = {Ecology Letters},
volume = {27},
number = {8},
hidepages = {e14495},
keywords = {biodiversity, deep metric learning, feature space, machine learning, phenotypic differences, python package, species identification},
doi = {10.1111/ele.14495},
hideurl = {https://onlinelibrary.wiley.com/doi/abs/10.1111/ele.14495},
hideeprint = {https://onlinelibrary.wiley.com/doi/pdf/10.1111/ele.14495},
hidenote = {e14495 ELE-00472-2024.R1},
year = {2024}
}

@ARTICLE{hu2016deep,
  author={Hu, Junlin and Lu, Jiwen and Tan, Yap-Peng},
  journal={IEEE Transactions on Circuits and Systems for Video Technology}, 
  title={Deep Metric Learning for Visual Tracking}, 
  year={2016},
  volume={26},
  number={11},
  pages={2056-2068},
  keywords={Visualization;Target tracking;Neural networks;Machine learning;Training;Learning systems;Deep learning;metric learning;visual tracking},
  doi={10.1109/TCSVT.2015.2477936}
}

@InProceedings{coskun2018human,
author = {Coskun, Huseyin and Tan, David Joseph and Conjeti, Sailesh and Navab, Nassir and Tombari, Federico},
title = {Human Motion Analysis with Deep Metric Learning},
booktitle = {Proceedings of the European Conference on Computer Vision (ECCV)},
month = {September},
year = {2018},
doi       = {10.1007/978-3-030-01264-9_41},
}

@inproceedings{milbich2020diva,
  title={DiVA: Diverse Visual Feature Aggregation for Deep Metric Learning},
  author={Milbich, Timo and Roth, Karsten and Bharadhwaj, Homanga and Sinha, Samarth and Bengio, Yoshua and Ommer, Bj{\"o}rn and Cohen, Joseph Paul},
  booktitle={Computer Vision--ECCV 2020: 16th European Conference, Glasgow, UK, August 23--28, 2020, Proceedings, Part VIII 16},
  pages={590--607},
  year={2020},
  organization={Springer},
  doi={10.1007/978-3-030-58598-3_35},
}

@inproceedings{coria2020metric,
    title = "A Metric Learning Approach to Misogyny Categorization",
    author = "Coria, Juan Manuel  and
      Ghannay, Sahar  and
      Rosset, Sophie  and
      Bredin, Herv{\'e}",
    editor = "Gella, Spandana  and
      Welbl, Johannes  and
      Rei, Marek  and
      Petroni, Fabio  and
      Lewis, Patrick  and
      Strubell, Emma  and
      Seo, Minjoon  and
      Hajishirzi, Hannaneh",
    booktitle = "Proceedings of the 5th Workshop on Representation Learning for NLP",
    month = jul,
    year = "2020",
    address = "Online",
    publisher = "Association for Computational Linguistics",
    hideurl = "https://aclanthology.org/2020.repl4nlp-1.12/",
    doi = "10.18653/v1/2020.repl4nlp-1.12",
    pages = "89--94",
}

@INPROCEEDINGS{biswas2022geometric,
  author={Biswas, Sajib and Barao, Timothy and Lazzari, John and McCoy, Jeret and Liu, Xiuwen and Kostandarithes, Alexander},
  booktitle={2022 International Joint Conference on Neural Networks (IJCNN)}, 
  title={Geometric Analysis and Metric Learning of Instruction Embeddings}, 
  year={2022},
  volume={},
  number={},
  pages={1-8},
  keywords={Measurement;Codes;Reverse engineering;Neural networks;Bit error rate;Transformers;Software},
  doi={10.1109/IJCNN55064.2022.9892426}
}

@inproceedings{iscen2018mining,
  title={Mining on Manifolds: Metric Learning without Labels},
  author={Iscen, Ahmet and Tolias, Giorgos and Avrithis, Yannis and Chum, Ond{\v{r}}ej},
  booktitle={Proceedings of the IEEE Conference on Computer Vision and Pattern Recognition},
  pages={7642--7651},
  year={2018},
  doi       = {10.1109/CVPR.2018.00797},
}

@article{banach1922,
  author  = {Banach, Stefan},
  title   = {Sur les opérations dans les ensembles abstraits et leur application aux équations intégrales},
  journal = {Fundamenta Mathematicae},
  volume  = {3},
  pages   = {133--181},
  year    = {1922},
  doi     = {10.4064/fm-3-1-133-181},
}

@book{granas2003fixed,
  title={Fixed Point Theory},
  author={Granas, Andrzej and Dugundji, James and others},
  volume={14},
  year={2003},
  publisher={Springer},
  address = {New York, NY},
doi = {10.1007/978-0-387-21593-8},
}

@book{borsuk1967theory,
  title     = {Theory of Retracts},
  author    = {Karol Borsuk},
  publisher = {Państwowe Wydawnictwo Naukowe},
  year      = {1967},
  series    = {Monografie Matematyczne},
  volume    = {44},
  address   = {Warszawa},
  isbn      = {978-0800220815}
}

@inproceedings{cuturi2013sinkhorn,
  title={Sinkhorn Distances: Lightspeed Computation of Optimal Transport},
  author={Cuturi, Marco},
  booktitle={Advances in Neural Information Processing Systems},
  volume={26},
  publisher = {Curran Associates, Inc.},
  pages={2292--2300},
  year={2013},
  hidedoi = {10.5555/2999792.2999868},
  hideurl = {https://proceedings.neurips.cc/paper_files/paper/2013/file/af21d0c97db2e27e13572cbf59eb343d-Paper.pdf},
  url={https://papers.nips.cc/paper_files/paper/2013/hash/af21d0c97db2e27e13572cbf59eb343d-Abstract.html},
}

@inproceedings{feydy2019interpolating,
  title={Interpolating between Optimal Transport and MMD using {S}inkhorn Divergences},
  author={Feydy, Jean and S{\'e}journ{\'e}, Thibault and Vialard, Fran{\c{c}}ois-Xavier and Amari, Shun-ichi and Trouv{\'e}, Alain and Peyr{\'e}, Gabriel},
  booktitle={Proceedings of the 22nd International Conference on Artificial Intelligence and Statistics (AISTATS)},
  year={2019},
publisher =    {PMLR},
volume={89},
pages={2681--2690},
url       = {https://proceedings.mlr.press/v89/feydy19a.html}
}

@article{brouwer1911fixpoint,
author = {Brouwer, L.E.J.},
journal = {Mathematische Annalen},
pages = {97-115},
title = {{\"U}ber {A}bbildung von {M}annigfaltigkeiten},
doi = {10.1007/BF01456931},
volume = {71},
year = {1911},
}

@article{hao2021integrated,
  title={Integrated analysis of multimodal single-cell data},
  author={Hao, Yuhan and Hao, Stephanie and Andersen-Nissen, Erica and Mauck, William M and Zheng, Shiwei and Butler, Andrew and Lee, Maddie J and Wilk, Aaron J and Darby, Charlotte and Zager, Michael and others},
journal = {Cell},
volume = {184},
number = {13},
pages = {3573-3587.e29},
year = {2021},
issn = {0092-8674},
doi = {10.1016/j.cell.2021.04.048},
  publisher={Elsevier}
}

@article{maaten2008tsne,
  author  = {Laurens van der Maaten and Geoffrey Hinton},
  title   = {Visualizing Data using t-SNE},
  journal = {Journal of Machine Learning Research},
  year    = {2008},
  volume  = {9},
  number  = {86},
  pages   = {2579--2605},
  url     = {http://jmlr.org/papers/v9/vandermaaten08a.html}
}

@inproceedings{zha2001spectral,
 booktitle = {Advances in Neural Information Processing Systems},
 editor = {T. Dietterich and S. Becker and Z. Ghahramani},
 pages = {},
 publisher = {MIT Press},
title     = {Spectral Relaxation for K-means Clustering},
  author    = {Zha, Hongyuan and He, Xiaofeng and Ding, Chris and Gu, Ming and Simon, Horst D.},
  booktitle = {Advances in Neural Information Processing Systems (NeurIPS)},
  volume    = {14},
  pages     = {1057--1064},
  year      = {2001},
  url = {https://proceedings.neurips.cc/paper_files/paper/2001/file/d5c186983b52c4551ee00f72316c6eaa-Paper.pdf},
}

@unpublished{beckmann2025normalized,
  author    = {Beckmann, Matthias and Beinert, Robert and Bresch, Jonas},
  title     = {Normalized Radon Cumulative Distribution Transforms for Invariance and Robustness in Optimal Transport Based Image Classification},
  journal={arXiv},
  doi = {10.48550/arXiv.2506.08761},
  year      = {2025}
}

@inproceedings{beckmann2025maxnormalized,
  author       = {Beckmann, Matthias and Beinert, Robert and Bresch, Jonas},
  title        = {Max‑Normalized Radon Cumulative Distribution Transform for Limited Data Classification},
  booktitle    = {Scale Space and Variational Methods in Computer Vision},
  series       = {Lecture Notes in Computer Science},
  volume       = {15667},
  pages        = {241--254},
  publisher    = {Springer},
  year         = {2025},
  doi          = {10.1007/978-3-031-92366-1_19},
}

@article{chizat2018unbalanced,
  title={Unbalanced optimal transport: Dynamic and {K}antorovich formulations},
  author={Chizat, Lenaic and Peyr{\'e}, Gabriel and Schmitzer, Bernhard and Vialard, Fran{\c{c}}ois-Xavier},
  journal={Journal of Functional Analysis},
  volume={274},
  number={11},
  pages={3090--3123},
  year={2018},
  publisher={Elsevier},
  doi     = {10.1016/j.jfa.2018.03.008},
}

@book{schoelkopf2001kernellearning,
author = {Schölkopf, Bernhard and Smola, Alexander J.},
title = {Learning with Kernels: Support Vector Machines, Regularization, Optimization, and Beyond},
year = {2001},
isbn = {0262194759},
publisher = {MIT Press},
address = {Cambridge, MA, USA},
abstract = {From the Publisher:In the 1990s, a new type of learning algorithm was developed, based on results from statistical learning theory: the Support Vector Machine (SVM). This gave rise to a new class of theoretically elegant learning machines that use a central concept of SVMs -kernels--for a number of learning tasks. Kernel machines provide a modular framework that can be adapted to different tasks and domains by the choice of the kernel function and the base algorithm. They are replacing neural networks in a variety of fields, including engineering, information retrieval, and bioinformatics. Learning with Kernels provides an introduction to SVMs and related kernel methods. Although the book begins with the basics, it also includes the latest research. It provides all of the concepts necessary to enable a reader equipped with some basic mathematical knowledge to enter the world of machine learning using theoretically well-founded yet easy-to-use kernel algorithms and to understand and apply the powerful algorithms that have been developed over the last few years.}
}

\appendix

\section{Supplementary Material}
\begin{theorem}[Banach's Fixed Point Theorem \cite{banach1922}]
    \label{thm:banach}
    Let $T: \R^{d} \to \R^{d}$ 
    be contractive with Lipschitz constant $L < 1$ 
    with respect to the norm $\|\cdot\|$.
    Then $T$ has a unique fixed point $x^*$,
    and the sequence $(x^t)_{t \in \N}$ generated by fixed point iteration 
    $x^{t+1} \coloneqq  T (x^t)$  
   converges for an arbitrary initialization $x^0 \in \R^{d}$ 
    converges linearly to $x^*$, meaning that
    \[
        \|x^{t+1} - x^*\| \le L \| x^t - x ^* \|
        \quad \text{for all} \quad 
        t \in \N.
    \] 
    \end{theorem}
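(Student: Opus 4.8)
The plan is to run the classical contraction-mapping argument in three stages: produce a candidate fixed point as the limit of a Cauchy sequence, check that it is fixed and unique, and read off the rate. First I would establish that the iterates $(x^t)_{t\in\N}$ form a Cauchy sequence. Applying the contraction property repeatedly gives $\|x^{t+1}-x^t\| = \|T(x^t)-T(x^{t-1})\| \le L\|x^t-x^{t-1}\|$, hence by induction $\|x^{t+1}-x^t\| \le L^t\|x^1-x^0\|$. Then, for $m>t$, the triangle inequality together with the geometric series yields
\[
\|x^m-x^t\| \le \sum_{s=t}^{m-1}\|x^{s+1}-x^s\| \le \Big(\sum_{s=t}^{\infty}L^s\Big)\|x^1-x^0\| = \frac{L^t}{1-L}\,\|x^1-x^0\|,
\]
which tends to $0$ as $t\to\infty$ because $L<1$. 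Since $(\R^d,\|\cdot\|)$ is complete — all norms on the finite-dimensional space $\R^d$ are equivalent, so completeness follows from completeness in the Euclidean norm — the sequence converges to some $x^*\in\R^d$.

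Next I would verify that $x^*$ is the unique fixed point. Being $L$-Lipschitz, $T$ is continuous, so letting $t\to\infty$ in $x^{t+1}=T(x^t)$ gives $x^*=T(x^*)$. If $y^*$ were another fixed point, then $\|x^*-y^*\| = \|T(x^*)-T(y^*)\| \le L\|x^*-y^*\|$, i.e.\ $(1-L)\|x^*-y^*\|\le 0$, which forces $x^*=y^*$ since $L<1$. Finally, the linear rate is immediate: for every $t\in\N$,
\[
\|x^{t+1}-x^*\| = \|T(x^t)-T(x^*)\| \le L\|x^t-x^*\|,
\]
which is exactly the claimed estimate (and iterating it also gives $\|x^t - x^*\| \le L^t \|x^0 - x^*\|$).

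There is no genuine obstacle in this argument; the only step deserving a remark is the use of completeness of $(\R^d,\|\cdot\|)$, which is what turns the Cauchy estimate into an actual limit and is the reason the statement is phrased for a finite-dimensional Euclidean space. Everything else is elementary manipulation of the contraction inequality and the triangle inequality.
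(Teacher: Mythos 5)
The paper does not give its own proof of this statement: it is recorded in the appendix as a classical result with a citation to Banach's 1922 paper, precisely so that it can be invoked in Theorems~\ref{l:lipshcitz} and~\ref{thm: convergence grad}. Your argument is the standard textbook proof of the contraction mapping principle, and it is correct, including the remark that completeness of $(\R^d,\|\cdot\|)$ for any norm follows from norm equivalence in finite dimensions; there is nothing to compare it against in the paper itself.
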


\begin{theorem}[Brouwer's Fixed Point Theorem \cite{brouwer1911fixpoint}]
\label{thm: brouwer}
Let $T: \R^{d} \to \R^{d}$ 
be a continuous function 
and $K \subset \R^d$ be non-empty, convex, and compact such that $T : K \to K$.
Then,
there exists a fixed point $x_0 \in K$ of $T$,
meaning that $T(x_0) = x_0$.
\end{theorem}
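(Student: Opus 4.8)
The plan is to give the classical self-contained proof via Sperner's lemma, organised in three stages: reduce to the standard simplex, prove the fixed-point property there by a mesh-refinement limit over Sperner-coloured triangulations, and finally prove Sperner's lemma itself by a double-counting induction. For the reduction, since $K$ is non-empty, compact and convex it lies inside some closed simplex $\Delta \supset K$, and the metric projection $\pi_K:\R^d\to K$ onto the closed convex set $K$ is well-defined, $1$-Lipschitz and hence continuous, with $\pi_K|_K=\mathrm{id}_K$; so $\pi_K|_\Delta:\Delta\to K$ is a retraction. Then $g\coloneqq T\circ\pi_K|_\Delta:\Delta\to K\subseteq\Delta$ is continuous, and any fixed point $x_0\in\Delta$ of $g$ satisfies $x_0=g(x_0)\in K$, hence $\pi_K(x_0)=x_0$ and $T(x_0)=g(x_0)=x_0$. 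Thus it suffices to treat $K=\Delta^n\coloneqq\{x\in\R^{n+1}_{\ge0}:\sum_{i=0}^{n}x_i=1\}$.

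For the simplex, I would argue by contradiction, assuming the continuous map $T:\Delta^n\to\Delta^n$ has no fixed point. Since $\sum_i(T(x)_i-x_i)=0$ and $T(x)\ne x$ for every $x$, the set $\{i:T(x)_i<x_i\}$ is non-empty, and every such $i$ has $x_i>0$. Given any triangulation $\mathcal K$ of $\Delta^n$, colour each vertex $v$ by some $c(v)\in\{i:T(v)_i<v_i\}$; as $v_{c(v)}>0$, a vertex on the sub-face spanned by $\{e_i\}_{i\in S}$ (so $v_i=0$ for $i\notin S$) must have $c(v)\in S$, i.e.\ $c$ is a Sperner colouring. Taking triangulations $\mathcal K_k$ whose mesh tends to $0$, Sperner's lemma yields for each $k$ a simplex whose vertices $v^{0,k},\dots,v^{n,k}$ receive all $n+1$ colours, so after relabelling $T(v^{j,k})_j<v^{j,k}_j$ for every $j$. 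Passing to a subsequence with $v^{0,k}\to x^\ast$, and using that the mesh vanishes so $v^{j,k}\to x^\ast$ for all $j$, continuity gives $T(x^\ast)_j\le x^\ast_j$ for each $j$; together with $\sum_j T(x^\ast)_j=1=\sum_j x^\ast_j$ this forces $T(x^\ast)=x^\ast$, a contradiction.

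It then remains to prove Sperner's lemma: for every triangulation of $\Delta^n$ equipped with a Sperner colouring the number of rainbow $n$-simplices (those whose vertices carry all $n+1$ colours) is odd, hence nonzero. I would induct on $n$, with $n=0$ immediate. Call a facet of an $n$-simplex of the triangulation a \emph{door} if its $n$ vertices carry exactly the colours $\{0,\dots,n-1\}$; then each $n$-simplex has $0$ or $2$ doors unless it is rainbow, in which case it has exactly $1$. Every interior door is the common facet of exactly two $n$-simplices, while every boundary door lies in the face $\{x_n=0\}\cong\Delta^{n-1}$ and is there a rainbow $(n-1)$-simplex of the induced triangulation and colouring. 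Counting (simplex, door) incidences modulo $2$ equates the parity of the number of rainbow $n$-simplices with the parity of the number of boundary doors, which is odd by the inductive hypothesis.

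The main obstacle is this third stage: formalising triangulations of $\Delta^n$, producing triangulations of arbitrarily small mesh (e.g.\ by iterated barycentric subdivision, each step shrinking diameters by a factor $\le\tfrac{n}{n+1}$), and carrying the door-counting bookkeeping through the induction without gaps — the reduction to the simplex and the compactness/continuity limit above are comparatively routine once Sperner is available. An alternative, avoiding Sperner, is the analytic Milnor-type proof: approximate $T$ uniformly by a smooth map, reduce to the impossibility of a smooth retraction of a ball onto its boundary sphere, and derive a contradiction from the fact that $t\mapsto\int_{B}\det D\bigl((1-t)\,\mathrm{id}+t\,r\bigr)\,\mathrm{d}x$ would then be at once a nonzero polynomial in $t$ and constantly equal to the volume of $B$; one could also invoke the algebraic-topology fact that $S^{n-1}$ is not a retract of the closed ball. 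I nonetheless favour the Sperner route for a self-contained appendix.
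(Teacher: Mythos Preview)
Your proof is correct and follows the classical Sperner-lemma route in a clean, self-contained way; the reduction via metric projection onto $K$, the colouring argument, and the door-counting induction are all standard and sound.

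However, there is nothing to compare against: the paper does not prove Brouwer's Fixed Point Theorem at all. It is stated in the appendix purely as a cited background result (with reference to \cite{brouwer1911fixpoint}) and is then invoked as a black box in the proof of Theorem~\ref{thm: existence of wasserstein singular}. So your proposal supplies a full proof where the paper deliberately offers none; this is fine as supplementary material, but for the paper's purposes a citation suffices.
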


The Perron-Frobenius Theorem can be found, e.g., in~\cite[Thm.\ 8.2.8]{horn2012matrix}.

\begin{theorem}[Perron-Frobenius Theorem \cite{horn2012matrix}]    \label{thm:fp}
Let $T\in  \R_{>0}^{d \times d}$. Then $T$ has a simple largest eigenvalue $\lambda_T > 0$ and all other eigenvalues $\lambda$ of $T$ fulfill $|\lambda| < \lambda_T$.
Furthermore, there exists, up to normalization, a unique eigenvector corresponding to $\lambda_T$, which has only positive entries.
\end{theorem}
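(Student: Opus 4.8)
The statement is the classical Perron--Frobenius theorem for a strictly positive matrix $T\in\R_{>0}^{d\times d}$, so the plan is to give a self-contained proof that uses only tools already available in the excerpt, in particular Brouwer's Fixed Point Theorem~\ref{thm: brouwer}. I would organize it in four stages: existence of a positive eigenpair, the modulus bound $|\mu|\le\lambda_T$, strictness of that bound, and simplicity of $\lambda_T$ together with uniqueness of its eigenvector up to scaling.

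For \emph{existence}, I would apply Brouwer to the map $\Phi(x)\coloneqq Tx/(\1_d^\tT Tx)$ on the probability simplex $\Delta\coloneqq\{x\in\R_{\ge0}^d:\1_d^\tT x=1\}$, which is nonempty, convex and compact. The map is well defined and continuous there, since $x\in\Delta$ forces $x\neq0$ and hence $Tx\in\R_{>0}^d$, and it maps $\Delta$ into $\Delta$. A fixed point $v\in\Delta$ gives $Tv=\lambda_T v$ with $\lambda_T\coloneqq\1_d^\tT Tv>0$, and $v=Tv/\lambda_T>0$. Running the same argument on $T^\tT$ produces a strictly positive left eigenvector $u$, say $u^\tT T=\nu\,u^\tT$ with $\nu>0$; pairing it against $v$ gives $\nu\,u^\tT v=u^\tT Tv=\lambda_T u^\tT v$, and $u^\tT v>0$ forces $\nu=\lambda_T$. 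For the \emph{modulus bound}, given any eigenpair $(\mu,w)$ with $w\in\C^d\setminus\{0\}$, the triangle inequality applied to $\mu w_i=\sum_j T_{i,j}w_j$ yields $|\mu|\,|w|\le T|w|$ entrywise, where $|w|$ is the vector of moduli; multiplying by $u^\tT$ and using $u^\tT|w|>0$ gives $|\mu|\le\lambda_T$.

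The two delicate steps are strictness and simplicity. For \emph{strictness} I would assume $|\mu|=\lambda_T$: then the $u$-weighted inequality is an equality, hence $|\mu|\,|w_i|=\sum_j T_{i,j}|w_j|$ and $\bigl|\sum_j T_{i,j}w_j\bigr|=\sum_j T_{i,j}|w_j|$ for every $i$; since all $T_{i,j}>0$, equality in the complex triangle inequality forces all $w_j$ to share a common argument $\theta$, so $w=e^{\mathrm i\theta}|w|$, and substituting back into $Tw=\mu w$ gives $\mu=\lambda_T$. For \emph{simplicity} I would first show the eigenspace is one-dimensional: if $Tw=\lambda_T w$ with $w$ real and not a multiple of $v$, then since $v>0$ the line $t\mapsto v-tw$ leaves $\R_{\ge0}^d$ for $t$ of at least one sign, so some $t^\ast\neq0$ has $v-t^\ast w\ge0$ with a vanishing coordinate; but $T(v-t^\ast w)=\lambda_T(v-t^\ast w)$ is then nonnegative with a vanishing coordinate, while $T$ sends every nonzero nonnegative vector strictly into $\R_{>0}^d$, forcing $v-t^\ast w=0$, a contradiction (the complex case reduces to this via real and imaginary parts). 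Finally I would rule out Jordan blocks: if $Tx=\lambda_T x+v$, then $u^\tT Tx=\lambda_T u^\tT x$ and also $u^\tT(\lambda_T x+v)=\lambda_T u^\tT x+u^\tT v$, forcing $u^\tT v=0$, impossible since $u,v\in\R_{>0}^d$; hence the algebraic multiplicity of $\lambda_T$ is $1$ and $v$ is the unique (up to normalization) eigenvector.

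The main obstacle is packaging the two equality analyses cleanly: extracting ``common argument, hence $\mu=\lambda_T$'' in the strictness step, and ``no nontrivial Jordan block'' in the simplicity step. Both go through because the strictly positive left eigenvector $u$ serves as a faithful test functional that detects any nonnegative-but-nonzero discrepancy, and because $T$ maps every nonzero nonnegative vector strictly inside the positive orthant; once these two facts are isolated, everything else is routine.
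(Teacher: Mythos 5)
Your proof is correct, and it is also a genuinely different thing from what the paper does: the paper does not prove this statement at all, but simply cites it as \cite[Thm.\ 8.2.8]{horn2012matrix} and treats it as a black box. So there is nothing in the paper to compare step-by-step; what you supply is a self-contained argument using only Brouwer's theorem (Theorem~\ref{thm: brouwer}), which the paper indeed has available in the same appendix.

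On the substance: all four stages go through. The Brouwer fixed-point argument on the simplex for existence of a positive right (and, applied to $T^\tT$, left) eigenpair is standard and cleanly handled. The $u$-weighted modulus bound is correct, and you correctly notice the key equality-analysis facts: in the strictness step, the entrywise equality $|\mu|\,|w|=T|w|$ combined with strict positivity of $T$ forces $|w|>0$ everywhere, so every $w_j$ is nonzero and equality in the complex triangle inequality forces a common phase; in the simplicity step, $T$ maps every nonzero nonnegative vector strictly into $\R_{>0}^d$, which kills any boundary fixed direction, and the positive left eigenvector $u$ detects a would-be rank-$2$ Jordan block via $u^\tT v>0$. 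One tiny stylistic point in the geometric-multiplicity argument: after deducing $v-t^\ast w=0$ you should explicitly note that this makes $w$ a scalar multiple of $v$, contradicting the initial hypothesis---that is the contradiction being invoked, and the reader has to supply it. Also worth flagging (harmless, since you implicitly cover it) that $\sup\{t:v-tw\ge 0\}$ or $\inf\{t:v-tw\ge 0\}$ is finite precisely because $w\neq 0$ has a nonzero coordinate, and is nonzero because $v>0$. Overall the proof is sound and would serve as a valid elementary replacement for the citation; it trades conciseness for self-containment, which is a reasonable choice if one wants the appendix to be free of external references.
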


\section{Proof of Theorem \ref{thm: existence of wasserstein singular} with mappings (\ref{eq: wasserstein peyre}) }\label{sec: proof of existence}

As mentioned in the discussion after Theorem \ref{thm: existence of wasserstein singular}, the proof consists of similar steps. We consider the sets
\[
\bb M_m^r \coloneqq \{ A \in \bb {D}_{m} \mid \norm{A}_{\infty} = 1 \text{ and } A_{k,\ell} \geq r \text{ for all } k \neq \ell \} 
\]
with $r > 0$. In analogy to $\bb K$, the operator $\q T$ maps $\bb M_m$ to itself.
\begin{lemma}[{\cite{huizing2022unsupervised}}]
Let $R_n \in \bb D_n$, $R_m \in \bb D_m$ be metric matrices \eqref{eq: metric matrices}, and let setup of Section \ref{sec: wasserstein} apply, i.e, normalization \eqref{normalization}, $\uX_i \neq \uX_j$ for all $i,j \in [n],$ $i \neq j$ and $\oX^k \neq \oX^\ell$ for all $k,\ell \in [m]$. Consider operators $\tilde{\q F}, \tilde{\q G},$ and $\tilde{\q T}$ defined in \eqref{fix2}. Then, there exists $0 < r \le 1$ such that
\[
\tilde{\q F}(\bb M_m^r) \subseteq \bb M_n^r, 
\quad \tilde{\q G}(\bb M_n^r) \subseteq \bb M_m^r
\quad \text{and} \quad
\tilde{\q T}(\bb M_m^r) \subseteq \bb M_m^r. 
\]
\end{lemma}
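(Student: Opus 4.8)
The plan is to manufacture a single constant $r\in(0,1]$ from the reference matrices $R_m,R_n$ only, exploiting the fact that the lower bound forced on the image of $\tilde{\q F}$ is produced by $R_n$ and is insensitive to the input. The starting point is the reduction from Remark~\ref{peyre}: on the sphere $\{A:\norm{A}_\infty=1\}$ the mapping \eqref{eq: wasserstein peyre} coincides with \eqref{eq:kernelregularized_w}, so all of Lemma~\ref{prop: wasserstein properties} applies on $\bb M_m^r$. First I would show that normalized images stay among metric matrices: for $A\in\bb M_m^r\subseteq\bb D_m$ the ground cost is a metric matrix, so $W_A(\uX)\in\bb D_n$, and adding $R_n\in\bb D_n$ keeps the sum in the convex cone $\bb D_n$ by Lemma~\ref{prop: wasserstein properties}(2); since $\|\q F(A)\|_\infty\ge\|R_n\|_\infty>0$ by Lemma~\ref{prop: wasserstein properties}(3), the normalization is well defined, $\tilde{\q F}(A)=\q F(A)/\|\q F(A)\|_\infty\in\bb D_n$ as a positive multiple of an element of the cone, and $\|\tilde{\q F}(A)\|_\infty=1$ by construction; symmetrically for $\tilde{\q G}$.

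Next I would obtain the uniform lower bound on off-diagonal entries. Any coupling $P\in\Pi(\uX_i,\uX_j)$ has $\|P\|_1=1$, so, using $P\ge0$ and $\|A\|_\infty=1$, one gets $W_A(\uX_i,\uX_j)\le\langle A,P\rangle\le\|A\|_\infty\|P\|_1=1$, hence $\|\q F(A)\|_\infty=\max_{i,j}\big(W_A(\uX_i,\uX_j)+(R_n)_{i,j}\big)\le 1+\|R_n\|_\infty$. Therefore, for $i\ne j$,
\[
\tilde{\q F}(A)_{i,j}=\frac{W_A(\uX_i,\uX_j)+(R_n)_{i,j}}{\|\q F(A)\|_\infty}\ \ge\ \frac{(R_n)_{i,j}}{1+\|R_n\|_\infty}\ \ge\ \frac{\rho_n}{1+\|R_n\|_\infty}\ \eqqcolon\ r_{\q F},
\]
where $\rho_n\coloneqq\min_{i\ne j}(R_n)_{i,j}>0$ because $R_n\in\bb D_n$, and $0<r_{\q F}<1$ since $\rho_n\le\|R_n\|_\infty$. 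The same computation gives $\tilde{\q G}(B)_{k,\ell}\ge r_{\q G}\coloneqq\rho_m/(1+\|R_m\|_\infty)\in(0,1)$ for every $B\in\bb D_n$ with $\|B\|_\infty=1$ and $k\ne\ell$, with $\rho_m\coloneqq\min_{k\ne\ell}(R_m)_{k,\ell}>0$. Crucially, both bounds use only $\|A\|_\infty=1$, resp.\ $\|B\|_\infty=1$, and nothing about a lower bound on the input.

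Finally I would set $r\coloneqq\min\{r_{\q F},r_{\q G}\}\in(0,1)$: since $\bb M_m^r\subseteq\{A\in\bb D_m:\|A\|_\infty=1\}$, the first two steps yield $\tilde{\q F}(\bb M_m^r)\subseteq\bb M_n^{r_{\q F}}\subseteq\bb M_n^r$ and $\tilde{\q G}(\bb M_n^r)\subseteq\bb M_m^{r_{\q G}}\subseteq\bb M_m^r$, whence $\tilde{\q T}(\bb M_m^r)=\tilde{\q G}\big(\tilde{\q F}(\bb M_m^r)\big)\subseteq\tilde{\q G}(\bb M_n^r)\subseteq\bb M_m^r$. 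I expect the only genuinely delicate point to be the assertion in the first step that $W_A(\uX)$ inherits the triangle inequality from $A\in\bb D_m$, i.e.\ that the image lands in $\bb D_n$ and not merely among nonnegative symmetric zero-diagonal matrices; this is the classical fact that optimal transport with a metric ground cost is a metric, exactly as packaged in Lemma~\ref{prop: wasserstein properties}(2). Everything else — conicity of $\bb D_n$, the uniform bound $\|\q F(A)\|_\infty\le1+\|R_n\|_\infty$, and merging the two constants — is routine bookkeeping. For the Sinkhorn version one runs the same argument with the bound $W_A\le 1$ replaced by $2+2\varepsilon C$, as in the proof of Theorem~\ref{thm: existence of wasserstein singular}, which produces $r_{\q F}=\rho_n/(2+2\varepsilon C+\|R_n\|_\infty)$.
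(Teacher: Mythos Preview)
Your argument is correct. Note that the paper does not supply its own proof of this lemma; it is cited from \cite{huizing2022unsupervised}. The closest analog in the paper is the proof of Theorem~\ref{thm: existence of wasserstein singular}, where the same lower-bound mechanism is used: an upper bound on $\|\q G(B)\|_\infty$ for $\|B\|_\infty=1$ is combined with the off-diagonal lower bound $(R_m)_{k,\ell}$ to obtain \eqref{eq: boundbound G}. Your proof follows this route exactly, with one cosmetic difference: you bound $\|\q F(A)\|_\infty\le 1+\|R_n\|_\infty$ directly via $W_A(\uX_i,\uX_j)\le\|A\|_\infty\|P\|_1=1$, whereas the paper uses the Lipschitz estimate $\|\q F(A)-\q F(0)\|_\infty\le L_{\q F}\|A\|_\infty$ together with $\q F(0)=R_n$; for the Wasserstein case $L_{\q F}=1$ and the two give the same constant. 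Your final step of taking $r=\min\{r_{\q F},r_{\q G}\}$ is the natural way to get a single $r$ serving both inclusions, and your observation that the bounds depend only on $\|A\|_\infty=1$ (not on the input lower bound) is precisely what makes this work.
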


Since $\bb M_m^r$ includes constraint $\norm{A}_{\infty} = 1$, it is nonconvex and Brouwer's fixed point is not applicable. However, it is possible to use generalized Schauder theorem.

\begin{theorem}[Generalized Schauder theorem, 7(7.9) in \cite{granas2003fixed}] %
Let $\bb M$ be an absolute retract and $T: \bb M \to  \bb M$ be a compact map. Then $T$ has a fixed point.    
\end{theorem}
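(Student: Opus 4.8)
The plan is to reduce the statement to the classical Schauder fixed point theorem --- itself a consequence of Brouwer's Theorem~\ref{thm: brouwer} via finite-dimensional approximation --- by exploiting the defining property of absolute retracts (ARs). The structural input I would use is the standard characterization: a metrizable space $\bb{M}$ is an AR if and only if it is (homeomorphic to) a retract of a convex subset of a normed linear space. Concretely, via the Kuratowski--Wojdys{\l}awski embedding one realizes $\bb{M}$ as a closed subset $i(\bb{M})$ of the closed convex set $C \coloneqq \overline{\mathrm{conv}}\, i(\bb{M})$ inside a normed linear space $E$, and since $i(\bb{M})\cong\bb{M}$ is an AR embedded as a closed subset of the metric space $C$, there is a retraction $r\colon C \to i(\bb{M})$; after identifying $\bb{M}$ with $i(\bb{M})$ we have $r\circ i = \mathrm{id}_{\bb{M}}$. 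The first step is simply to fix such $E$, $C$, $i$ and $r$.

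Second, I would transport $T$ to $C$ by setting $\widetilde T \coloneqq i\circ T\circ r\colon C \to C$, a continuous map whose range lies in $i(\bb{M})\subseteq C$. Since $T$ is a compact map, $\overline{T(\bb{M})}$ is compact, hence so is the continuous image $i\big(\overline{T(\bb{M})}\big)$; as $\widetilde T(C)=i\big(T(\bb{M})\big)\subseteq i\big(\overline{T(\bb{M})}\big)$, the set $\overline{\widetilde T(C)}$ is a closed subset of a compact set and is therefore compact. Let $K \coloneqq \overline{\mathrm{conv}}\,\widetilde T(C)$; by Mazur's theorem the closed convex hull of a relatively compact subset of a normed linear space is compact, so $K$ is a nonempty compact convex set, and $K\subseteq C$ because $C$ is closed and convex with $\widetilde T(C)\subseteq C$. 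Moreover $\widetilde T(K)\subseteq\widetilde T(C)\subseteq K$, so $\widetilde T$ restricts to a continuous self-map of the nonempty compact convex set $K$, and Schauder's fixed point theorem yields $z\in K$ with $\widetilde T(z)=z$.

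Third, I would pull the fixed point back to $\bb{M}$. As $z=\widetilde T(z)$ and $\widetilde T$ takes values in $i(\bb{M})$, write $z=i(x)$ with $x\in\bb{M}$. Then
\[
i(x) = \widetilde T\big(i(x)\big) = i\big(T\big(r(i(x))\big)\big) = i\big(T(x)\big),
\]
using $r\circ i=\mathrm{id}_{\bb{M}}$, and injectivity of $i$ gives $x=T(x)$, the desired fixed point.

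The main obstacle --- really the only content beyond the classical Schauder theorem --- is the embedding/retraction step: one must know that an AR sits as a closed subset of a convex subset of a normed linear space and admits a retraction from it. This is where metrizability of $\bb{M}$ is essential (through the definition of AR in the metric category together with the Kuratowski--Wojdys{\l}awski embedding); granting it, the remainder is a routine combination of Mazur's and Schauder's theorems. Alternatively, one may take the characterization ``$\bb{M}$ is an AR $\iff$ $\bb{M}$ is a retract of a convex subset of a normed linear space'' as a black box and skip the explicit construction of $i$ and $C$ altogether.
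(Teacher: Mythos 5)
The paper does not actually prove this statement; it is quoted verbatim from Granas and Dugundji and used as a black box in Appendix~B, so there is no in-paper proof for your argument to be compared against. Your route --- realize the absolute retract $\bb M$ via the Kuratowski--Wojdys{\l}awski embedding as a closed subset of a convex set $C$ in a Banach space, retract $C$ onto $i(\bb M)$, transport $T$ to a compact self-map $\widetilde T$ of $C$, compactify via Mazur, and invoke classical Schauder --- is precisely the standard textbook proof, i.e.\ essentially the one in the cited source, and it is correct. Two small points of hygiene: Mazur's theorem on compactness of $\overline{\mathrm{conv}}$ requires completeness, so you should say ``Banach space'' rather than ``normed linear space'' when you introduce $E$ (this is harmless, since the Kuratowski--Wojdys{\l}awski construction lands in $C_b(\bb M)$, which is complete); and the detour through $K=\overline{\mathrm{conv}}\,\widetilde T(C)$ can be skipped if you invoke the version of Schauder's theorem for a compact map on a closed convex set.

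One issue you should be aware of that lies outside your proof but inside the paper: the paper's own Definition of \emph{compact map} (``$T^{-1}(\{a\})$ is compact for every $a$'') is not the Granas--Dugundji notion and, taken literally, makes the theorem false --- translation $x\mapsto x+1$ on the AR $\R$ has compact point-preimages but no fixed point. Your proof correctly uses the standard definition, that $\overline{T(\bb M)}$ is compact, and in fact this is exactly what you need to make the Mazur/Schauder step work. When the paper applies the theorem to $\tilde{\q T}$ on $\bb M_m^r$, both notions happen to hold because $\bb M_m^r$ is itself compact, so the application is unaffected; but the definition as written is not the one under which the theorem is true, and your proof implicitly corrects it.
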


In the finite-dimensional case, absolute retracts are characterized as follows.

\begin{theorem}[V(10.5) in \cite{borsuk1967theory}] 
A finite-dimensional compact set is an absolute retract if and only if it is contractible and locally contractible.
\end{theorem}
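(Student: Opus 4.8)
The plan is to prove the two implications of the characterization separately, after reducing both the notion of \emph{absolute retract} (AR) and that of \emph{absolute neighborhood retract} (ANR) to concrete statements about a fixed embedding of $X$ into Euclidean space. I would begin by recalling that a compact metrizable space $X$ is an AR exactly when it is an absolute extensor (for every metrizable $Z$, every closed $A\subseteq Z$, and every continuous $f\colon A\to X$ there is a continuous extension to all of $Z$), and an ANR when the extension is only required on some neighborhood of $A$ in $Z$. Since $X$ is finite-dimensional and compact, the Menger--N\"obeling theorem embeds it as a closed subset of some $\mathbb R^{N}$; as $\mathbb R^{N}$ and any convex body containing $X$ are absolute extensors (Dugundji), $X$ is an AR iff it is a retract of $\mathbb R^{N}$, and an ANR iff it is a retract of some open neighborhood of $X$ in $\mathbb R^{N}$. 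I would run the rest of the argument with this embedding fixed.

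For the direction ``AR $\Rightarrow$ contractible and locally contractible'': if $r\colon\mathbb R^{N}\to X$ is a retraction, then composing the straight-line contraction $H(x,t)=(1-t)x$ of $\mathbb R^{N}$ with $r$ gives a contraction $r\circ H|_{X\times[0,1]}$ of $X$ onto the point $r(0)$, so $X$ is contractible; and an AR is trivially an ANR. To obtain local contractibility I would invoke Borsuk's description of the local structure of ANRs: around any $x\in X$, a neighborhood retraction $\rho\colon W\to X$ together with a small convex ball $B\subseteq W$ with $\rho(B)$ contained in a prescribed neighborhood $U$ of $x$ yields, via the straight-line homotopy inside $B$ followed by $\rho$, a contraction of the neighborhood $\rho(B)\cap X$ of $x$ inside $U$.

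For the converse I would split the work into two steps. Step one, the heart of the theorem, is to show that a finite-dimensional compact locally contractible $X$ is an ANR, and this is where finite dimension is essential. The strategy is polyhedral approximation of $X\subseteq\mathbb R^{N}$: take finite open covers $\mathcal U$ of $X$ of arbitrarily small mesh, form their nerves $K_{\mathcal U}$, which are finite simplicial complexes of dimension bounded by $\dim X$ once $\mathcal U$ has small enough order, and use local contractibility to build a \emph{canonical map} $\kappa\colon K_{\mathcal U}\to X$ skeleton by skeleton (vertices go to points of the corresponding cover members, and the extension over each simplex uses that a small subset of $X$ is null-homotopic in a slightly larger one -- the dimension bound makes the inductive construction terminate). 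Pairing $\kappa$ with the partition-of-unity map $X\to K_{\mathcal U}$ and observing that the composite is homotopic to $\mathrm{id}_X$ through small motions, one realizes $K_{\mathcal U}$ inside a neighborhood of $X$ in $\mathbb R^{N}$ and retracts that neighborhood onto $X$, exhibiting $X$ as a neighborhood retract, hence an ANR. Step two is the (formal) fact that a contractible ANR is an AR: given closed $A\subseteq Z$ and $f\colon A\to X$, first extend $f$ to $\bar f\colon U\to X$ on a neighborhood $U$ of $A$ using the ANE property; pick a Urysohn function $\varphi\colon Z\to[0,1]$ with $\varphi\equiv 0$ on $A$ and $\varphi\equiv 1$ on $Z\setminus U$, and a contraction $c\colon X\times[0,1]\to X$ with $c(\cdot,0)=\mathrm{id}_X$ and $c(\cdot,1)\equiv x_0$; then $g(z)\coloneqq c(\bar f(z),\varphi(z))$ for $z\in U$ and $g(z)\coloneqq x_0$ for $z\notin U$ patch together continuously (the two formulas agree where $\varphi=1$) and extend $f$, so $X$ is an absolute extensor, i.e.\ an AR.

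The main obstacle is Step one -- proving that a finite-dimensional compact locally contractible space is an ANR through the nerve/polyhedral approximation. All the topology that makes the statement nontrivial is concentrated there, in particular the use of the dimension bound to terminate the skeletal extension of the canonical map, whereas the forward implication and ``contractible ANR $\Rightarrow$ AR'' are essentially formal once the basic ANR machinery is available.
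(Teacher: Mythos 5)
The paper does not prove this statement; it quotes it verbatim from Borsuk's monograph \cite{borsuk1967theory} and invokes it as a black box in Appendix~\ref{sec: proof of existence}, so there is no in-paper argument to compare against. That said, your outline faithfully reconstructs the standard proof from the retract-theory literature: embed the compact finite-dimensional space in $\mathbb R^N$ via Menger--N\"obeling; in the easy direction, obtain contractibility by composing a global retraction with the straight-line homotopy and local contractibility from the ANR property; for the converse, first prove that a finite-dimensional compact locally contractible space is an ANR via nerves of fine covers and canonical maps (with finite dimension bounding the order of the cover so that the skeleton-by-skeleton extension terminates), then upgrade a contractible ANR to an AR by the Urysohn-function gluing. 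The step you single out as the crux -- constructing the canonical map $\kappa\colon K_{\mathcal U}\to X$ and assembling it into a neighborhood retraction -- is indeed where the substance lies; a complete write-up would need Borsuk's quantitative refinement of local contractibility (a descending chain of neighborhoods, each null-homotopic in the next, with mesh control) to drive the induction and to show the composite $X\to K_{\mathcal U}\to X$ is a small homotopy of the identity. As a blind sketch of the cited result your proposal is structurally correct and matches the canonical argument.
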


Therefore, to conclude the proof of Theorem \ref{thm: existence of wasserstein singular}, we need to show that $\bb M_m^r$ is compact, contractible and locally contractible and $\tilde{\q T}: \bb M_m^r \to  \bb M_m^r$ is a compact map. Let us elaborate on these notions.

\begin{definition}
Let $\bb M$ be a topological space. The mapping $T: \bb M \to \bb M$ is called \textit{compact} if for all $a \in \bb M$ the preimage $T^{-1}(\{a\}) = \{ b \in \bb M: T(b) = a\}$ is compact.     
\end{definition}

\begin{definition}
A topological subspace $\bb S \subset \bb M$ is a \textit{deformation retract} of $\bb M$ onto $\bb S$, if there exists a map
\begin{equation*}
h: \bb M \times [0, 1] \to \bb M,
\end{equation*}
such that $h(a, 0) = a$, $h(a, 1) \in \bb S$ and $h(b, 1) = b$ for all $a \in \bb M$ and $b \in \bb S$.
\end{definition}

\begin{definition}
A space $\bb M$ is called 
\begin{enumerate}
\item \textit{contractible}, if there exists a point $a \in \bb M$ and deformation retract onto the singleton $\{a\}$;
\item \textit{(strongly) locally contractible}, if for every point $a \in \bb M$ and every neighborhood $\bb V \subset \bb M$ of $a$ there exists a neighborhood $\bb U \subset \bb V$ that is contractible in $\bb V$.
\end{enumerate}
\end{definition}

Now, we verify that all conditions are satisfied. Compactness of $\bb M_m^r$ follows directly from its definition. We can view $\bb M_m^r$ as an induced topological space of $\bb R^{m \times m}$ equipped with $\|\cdot\|_\infty$. Since $\tilde{\q T}$ is continuous by Proposition \ref{prop: wasserstein properties} and Theorem \ref{l:lipshcitz}, the preimage $\tilde{\q T}^{-1}(\{A\})$ is a closed subset of bounded $\bb M_m^r$ and, thus, compact.   

The remaining two properties are derived as separate lemmas.

\begin{lemma}
The set $\bb M_m^r$ is contractible.
\end{lemma}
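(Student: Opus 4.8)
The plan is to construct an explicit deformation retraction of $\bb M_m^r$ onto a single point. The natural choice of target is the matrix $J \in \R^{m\times m}$ of the discrete metric, defined by $J_{k,k} \coloneqq 0$ and $J_{k,\ell} \coloneqq 1$ for $k \neq \ell$. First I would verify that $J \in \bb M_m^r$: it is symmetric with vanishing diagonal, its off-diagonal entries equal $1 \ge r$ (since $r \le 1$), it obeys the triangle inequality $J_{k,s} \le J_{k,\ell} + J_{\ell,s}$ (from $1 \le 1+1$, together with the trivial cases in which one of the three indices coincides), and $\norm{J}_\infty = 1$. In particular $\bb M_m^r \neq \emptyset$, which we may take for granted anyway since the surrounding lemma needs it.

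Next I would introduce the straight-line homotopy
\[
h \colon \bb M_m^r \times [0,1] \to \R^{m\times m}, \qquad h(A,t) \coloneqq (1-t)\,A + t\,J,
\]
and check that $h(A,t) \in \bb M_m^r$ for every $A \in \bb M_m^r$ and every $t \in [0,1]$. Symmetry, the zero diagonal, the lower bounds $h(A,t)_{k,\ell} \ge r$ for $k \neq \ell$, and all the triangle inequalities are preserved under convex combinations, since each of these is expressed by a linear equality or inequality. The one point that needs a short argument is the normalization $\norm{h(A,t)}_\infty = 1$: every entry of $h(A,t)$ is a convex combination of entries of $A$ and of $J$, all lying in $[0,1]$, so $\norm{h(A,t)}_\infty \le 1$; and since $\norm{A}_\infty = 1$ and $m \ge 2$, there is an off-diagonal pair $(k_0,\ell_0)$ with $A_{k_0,\ell_0} = 1$, whence $h(A,t)_{k_0,\ell_0} = (1-t)\cdot 1 + t \cdot 1 = 1$ and therefore $\norm{h(A,t)}_\infty = 1$.

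Finally I would note that $h$ is continuous (being affine in $(A,t)$), that $h(\cdot,0) = \mathrm{id}_{\bb M_m^r}$, that $h(A,1) = J$ for all $A \in \bb M_m^r$, and that $h(J,t) = J$ for all $t$; hence $h$ is a deformation retraction of $\bb M_m^r$ onto the singleton $\{J\}$, so $\bb M_m^r$ is contractible. I do not expect a genuine obstacle in this argument — the only mild subtlety is that the $\ell_\infty$-normalization must survive the homotopy, and this works precisely because every off-diagonal entry of the target matrix $J$ already attains the maximal value $1$ (a less symmetric metric matrix would fail to serve as the contraction point, since then intermediate matrices could drop below norm $1$).
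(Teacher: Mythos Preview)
Your proof is correct and essentially identical to the paper's: your matrix $J$ is precisely the paper's $A_0 = \1_m\1_m^\tT - I_m$, and the straight-line homotopy $h(A,t) = (1-t)A + tJ$ is the same map they use, verified in the same way (convexity of $\bb D_m$ and of the bound $A_{k,\ell}\ge r$, together with the observation that an off-diagonal entry equal to $1$ in $A$ stays equal to $1$ throughout the homotopy). If anything, you are slightly more explicit in noting that $h(J,t)=J$, making the deformation-retract property fully transparent.
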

\begin{proof}
In order to show contractibility, we consider the map
\begin{equation*}
h: \bb M_m^r \times [0, 1] \to \bb M_m^r,\ (A, t) \mapsto t A_0 + (1 - t) A
\end{equation*}
taking every point $A \in \bb M_m^r$ to the apex $A_0 \in \bb M_m^r$ with 
\begin{equation*}
A_0 \coloneqq \1_m \1_m^\tT - I_m.
\end{equation*}
We show that this map is well-defined. Since $\bb D_m$ as a convex set, $h(A,t) \in \bb D_m$ is a convex combination of $A,A_0 \in \bb D_m$. Let $(k,\ell)$ be the indices, such that $A_{k,\ell} = 1$. Then, $k \neq \ell$ and
\begin{equation*}
t (A_0)_{k,\ell} + (1 - t) A_{k,\ell} = t + (1 - t) = 1,
\end{equation*}
and by convexity of $\|\cdot\|_\infty$ it holds
\[
\|t A_0 + (1-t)A\|_\infty \le t \| A_0\|_\infty + (1-t) \|A\|_\infty
 = 1,
\]
so that $\| t A_0 + (1 - t) A \|_{\infty} = 1$ for all $0 \le t \le 1$. Moreover, for all $k \neq \ell$ we have  
\begin{equation*}
t (A_0)_{k,\ell} + (1 - t) (A)_{k,\ell} 
\geq t r + (1 - t) r 
= r.
\end{equation*}
The continuity of $h$ is straightforward and by observing that $h(A,0) = A$ and $h(A,1)= A_0$ for all $A \in \bb M_m^r$ we conclude that $\bb M_m^r$ retracts onto the singleton $\{A_0\}$.
\end{proof}

\begin{lemma}
The set $\bb M_m^r$ is (strongly) locally contractible.
\end{lemma}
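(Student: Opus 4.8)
The plan is to contract a small neighbourhood of a given point $A \in \bb M_m^r$ onto $A$ itself, \emph{within} the prescribed larger neighbourhood, by combining the straight‑line homotopy toward $A$ with a renormalisation that restores the constraint $\norm{\cdot}_\infty = 1$. So fix $A \in \bb M_m^r$ and a neighbourhood $\bb V \subseteq \bb M_m^r$ of $A$; without loss of generality $\bb V$ contains $\{ A' \in \bb M_m^r : \norm{A' - A}_\infty < \rho \}$ for some $\rho > 0$. Define
\[
h(A', t) \coloneqq \frac{(1-t) A' + t A}{\norm{(1-t) A' + t A}_\infty}, \qquad (A', t) \in \bb M_m^r \times [0,1].
\]

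First I would check that $h$ is well defined and maps into $\bb M_m^r$. For $A', A \in \bb M_m^r$ the convex combination $M_t \coloneqq (1-t)A' + tA$ lies in $\bb D_m$: it is symmetric, has zero diagonal, off‑diagonal entries $\ge r > 0$, and the triangle inequality is a linear (hence convexity‑preserving) constraint. Thus $M_t \neq 0$, and since $\bb D_m$ is a cone, $M_t / \norm{M_t}_\infty \in \bb D_m$. By convexity of $\norm{\cdot}_\infty$ we have $\norm{M_t}_\infty \le (1-t)\norm{A'}_\infty + t\norm{A}_\infty = 1$, so dividing by $\norm{M_t}_\infty$ only enlarges entries and preserves the bound $\ge r$, while making $\norm{h(A',t)}_\infty = 1$. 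Hence $h(A',t) \in \bb M_m^r$. Continuity of $h$ is clear, and $h(A',0) = A'$ (because $\norm{A'}_\infty = 1$) and $h(A',1) = A$, so $h$ restricts to a null‑homotopy of the inclusion of any subset of $\bb M_m^r$, contracting it to $A$.

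Second I would establish the uniform‑in‑$t$ estimate that keeps the homotopy inside $\bb V$. With $\varepsilon \coloneqq \norm{A' - A}_\infty$ one has $\norm{M_t - A}_\infty = (1-t)\varepsilon \le \varepsilon$, hence $\bigl|\,\norm{M_t}_\infty - 1\,\bigr| \le \varepsilon$, and from
\[
h(A',t) - A = \frac{(M_t - A) + (1 - \norm{M_t}_\infty)A}{\norm{M_t}_\infty}
\]
together with $\norm{A}_\infty = 1$ we get $\norm{h(A',t) - A}_\infty \le \frac{2\varepsilon}{1 - \varepsilon}$ for $\varepsilon < 1$. Now choose $\varepsilon_0 \in (0,\tfrac12)$ with $\frac{2\varepsilon_0}{1-\varepsilon_0} < \rho$ and set $\bb U \coloneqq \{ A' \in \bb M_m^r : \norm{A' - A}_\infty < \varepsilon_0 \}$, a neighbourhood of $A$ contained in $\bb V$ (since $\varepsilon_0 < \rho$). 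Then $h|_{\bb U \times [0,1]}$ has image inside $\{ A' \in \bb M_m^r : \norm{A' - A}_\infty < \rho \} \subseteq \bb V$, so $\bb U$ is contractible in $\bb V$. As $A$ and $\bb V$ were arbitrary, $\bb M_m^r$ is (strongly) locally contractible.

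I expect the only real point requiring care — not so much an obstacle — to be verifying that the renormalised combination never leaves $\bb M_m^r$: the triangle‑inequality and the lower bound $A_{k,\ell}\ge r$ must survive both the convex combination and the division by $\norm{M_t}_\infty$. This is precisely why the homotopy is taken toward the fixed point $A$ (which itself satisfies $A_{k,\ell} \ge r$) rather than an arbitrary basepoint, and why it is essential that $\norm{M_t}_\infty \le 1$, so that dividing cannot destroy the bound $\ge r$.
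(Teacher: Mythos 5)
Your proof is correct, but it takes a genuinely different route from the paper's. The paper keeps the straight-line homotopy $(1-t)A' + tA$ \emph{without} renormalizing: it chooses the radius $\delta$ of the small neighborhood $\bb U$ so small (below $\min_{(k,\ell)\notin\supp(A)}(1-A_{k,\ell})$, where $\supp(A)$ is the set of maximal entries) that every $A' \in \bb U$ satisfies $\supp(A') \subseteq \supp(A)$; then for any index in $\supp(A')$ both $A'$ and $A$ are equal to $1$, so the convex combination already has $\norm{\cdot}_\infty = 1$ and never leaves the sphere, and convexity of the ball even gives $h(\bb U\times[0,1]) \subseteq \bb U$. You instead renormalize $(1-t)A'+tA$ by its $\infty$-norm and compensate with the quantitative estimate $\norm{h(A',t)-A}_\infty \le 2\varepsilon/(1-\varepsilon)$ to keep the image inside the larger $\bb V$. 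Both are valid: your approach buys robustness (no case analysis on the support of $A$, and no need to treat $A = \1_m\1_m^\tT - I_m$ separately as the paper does), while the paper's buys a cleaner conclusion (the small neighborhood $\bb U$ itself is contractible, not merely contractible in $\bb V$) and avoids the division, whose well-definedness you must justify via $\norm{M_t}_\infty \ge r > 0$. One remark: your final sentence says the homotopy is taken toward $A$ ``rather than an arbitrary basepoint'' to preserve $A_{k,\ell}\ge r$; actually any element of $\bb M_m^r$ would preserve the lower bound, and what forces the target to be the center $A$ is the requirement $h(\cdot,0)=\mathrm{id}$, $h(\cdot,1)=A$ in the definition of contracting $\bb U$ to the point $A$.
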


\begin{proof}
Let us denote by $B_{R}(A) \coloneqq \{A' \in \R^{m \times m} \mid \norm{A - A'}_{\infty} \le R\}$ the closed $\|\cdot\|_\infty$-ball of radius $R$.


Let $A \in \bb M_m^r$ be arbitrary such that $A \neq \1_m \1_m^\tT - I_m$. For every neighborhood $\bb V \subset \bb M_m^r$ of $A$ we can find $\varepsilon > 0$ such that $B_{\varepsilon}(A) \cap \bb M_m^r \subset \bb V$. 
Define $\supp(A) \coloneqq \{(k,\ell) \in [m]^2\mid A_{k,\ell} = 1\}$ and set
\[
0 < \delta < \min \{\varepsilon, \min_{(k,\ell) \notin \supp(A)} 1 - A_{k,\ell} \}.
\]
Then, we take $\bb U \coloneq B_{\delta}(A) \cap \bb M_m^r \subset \bb V$ in the definition of local contractility. 

Next, we show that $\bb U$ is contractible to $A$.
Let $A' \in \bb U$ be arbitrary. For $(k,\ell) \notin \supp(A)$ it holds
\[
A_{k,\ell}' \le A_{k,\ell} + \delta < 1 - \delta + \delta = 1
\]
by the choice of $\delta$ and we have $\emptyset \neq \supp(A') \subset \supp(A)$. 

For the excluded case $A= \1_m \1_m^\tT - I_m$, we select $0 < \delta < \varepsilon$ and always get $\emptyset \neq \supp(A') \subset \supp(A)$.


Now, define 
\begin{equation*}
h: \bb U \times [0, 1] \to \bb U,\ (A', t) \mapsto (1 - t)A' + tA.
\end{equation*}
We show that this map is well-defined. As in the previous proof, $h(A',t) \in \bb D_m$, $\| h(A',t) \|_\infty \le 1$ and $h(A',t)_{k,\ell} \ge r$ for all $k,\ell \in [m]$. 
For $(k,\ell) \in \supp(A')$ we have
\begin{equation*}
|h(A',t)_{k,\ell}| = |(1 - t) A'_{k,\ell} + t A_{k,\ell}| = (1 - t) + t = 1
\end{equation*}
and hence, $\|h(A',t)\|_\infty = 1$. Finally, by convexity of $B_{\delta}(A)$ we have $h(A',t) \in B_{\delta}(A)$ and $h$ indeed maps to $B_{\delta}(A) \cap \bb M_m^r = \bb U$.

Again, by construction for all $A' \in \bb U$ it holds $h(A', 0) = A'$, $h(A', 1) = A$ and the set $\bb U$ deformation retracts onto $\{A\}$.  
\end{proof}

\end{document}